\documentclass[11pt,twoside, a4paper, english, reqno]{amsart}
\usepackage{amssymb,amsfonts,latexsym,color}
\usepackage{graphics,verbatim}
\usepackage{graphicx}
\usepackage{a4wide}
\addtolength{\parskip}{0.5em}

\usepackage{color}
\usepackage[normalem]{ulem}

\newtheorem{theorem}{Theorem}[section]
\newtheorem{proposition}{Proposition}[section]
\newtheorem{lemma}{Lemma}[section]
\newtheorem{corollary}{Corollary}[section]
\theoremstyle{Remark}
\newtheorem{remark}{Remark}[section]
\theoremstyle{definition}
\newtheorem{definition}{Definition}[section]

\renewcommand{\>}{\right\rangle}

\newcommand{\eps}{\varepsilon}


\newcommand{\To}{\longrightarrow}

\newcommand{\be} {\begin{equation}}
\newcommand{\ee} {\end{equation}}
\newcommand{\bea} {\begin{eqnarray}}
\newcommand{\eea} {\end{eqnarray}}
\newcommand{\Bea} {\begin{eqnarray*}}
\newcommand{\Eea} {\end{eqnarray*}}
\newcommand{\pa} {\partial}

\newcommand{\al} {\alpha}
\newcommand{\ba} {\beta}
\newcommand{\de} {\delta}
\newcommand{\na}{\nabla}
\newcommand{\ga} {\gamma}
\newcommand{\Ga} {\Gamma}
\newcommand{\Om} {\Omega}
\newcommand{\om} {\omega}
\newcommand{\De} {\Delta}
\newcommand{\la} {\lambda}
\newcommand{\si} {\sigma}
\newcommand{\nequiv} {\not\equiv}
\newcommand{\Si} {\Sigma}

\newcommand{\no} {\nonumber}
\newcommand{\noi} {\noindent}
\newcommand{\lab} {\label}
\newcommand{\va} {\varphi}
\newcommand{\f}{\frac}
\newcommand{\R}{\mathbb R}
\newcommand{\N}{\mathbb N}
\newcommand{\Rn}{\mathbb R^N}

\newcommand{\deb}{\rightharpoonup}
\catcode`\@=11
\newcommand{\Hs}{{\dot{H}^s(\R^N)}}
\newcommand{\HH}{\dot{H}}

\makeatletter \@addtoreset{equation}{section} \makeatother


\begin{document}
\title[Nonlocal scalar field equations]{Nonlocal scalar field equations: qualitative properties, asymptotic profiles and local uniqueness of solutions}

\author{Mousomi Bhakta,  Debangana Mukherjee}
\address{M. Bhakta, Department of Mathematics, Indian Institute of Science Education and Research, Dr. Homi Bhaba Road, Pune-411008, India}
\email{mousomi@iiserpune.ac.in}
\address{D. Mukherjee, Department of Mathematics, Indian Institute of Science Education and Research, Dr. Homi Bhaba Road, Pune-411008, India}
\email{debangana18@gmail.com}

\subjclass[2010]{Primary 35J60, 35B40,  35B08,  35B44}
\keywords{critical, subcritical and supercritical nonlinearity,  fractional laplacian, entire solution, blow-up, uniqueness, symmetry, decay estimate, nonlocal.}
\maketitle
\date{}

\begin{abstract} We study the  nonlocal scalar field equation with a vanishing parameter
$$
\left\{\begin{aligned}
      (-\Delta)^s u+\epsilon u &=|u|^{p-2}u -|u|^{q-2}u \quad\text{in}\quad\mathbb{R}^N \\
      u >0, \quad  u &\in H^s(\mathbb{R}^N),
                \end{aligned}
  \right.
\leqno{(\mathcal{P}_\epsilon)}
$$
where $s\in(0,1)$, $N>2s$, $q>p>2$ are fixed parameters and  $\epsilon>0$ is a vanishing parameter. For $\epsilon>0$ small, we prove the existence of a ground state solution and show that any positive solution of $(\mathcal{P}_{\epsilon})$ is a  classical solution and radially symmetric and symmetric decreasing. We also obtain the decay rate of solution at infinity.  
Next, we study the asymptotic behavior of ground state solutions when $p$ is subcritical, supercritical  or critical Sobolev exponent $2^*=\frac{2N}{N-2s}$. For $p<2^*$, the solution asymptotically coincides with unique positive ground state solution of $(-\Delta)^s u+u=u^p$. On the other hand, for $p=2^*$ the asymptotic behaviour of the solutions is given by the unique positive solution of the nonlocal critical Emden-Fowler type equation. For $p>2^*$, the solution asymptotically coincides with a ground-state solution of $(-\Delta)^s u=u^p-u^q$.
Furthermore, using these asymptotic profile of solutions, we prove  the {\it local uniqueness} of solution in the case $p\leq 2^*$.
\end{abstract}

\tableofcontents

\section{Introduction and main results}
In this paper,  we consider a nonlocal scalar field equation:
\begin{equation}\tag{$\mathcal{P}_\eps$}
  \label{P-eps}
\left\{\begin{aligned}
      (-\De)^s u + \eps u &=|u|^{p-2}u -|u|^{q-2}u \quad\text{in }\quad \Rn, \\
      u&\gneq 0,\\
      u &\in H^s(\Rn), \\
 \end{aligned}
  \right.
\end{equation}
where  $s\in(0,1)$, $N>2s$,  $q>p>2$ and $\eps>0$ is a small parameter often considered in the regime $\eps\to 0$, while
all other parameters are fixed. By $(-\De)^s$ we denote the fractional Laplace operator which could be defined for functions $\varphi$ in the Schwartz class $\mathcal S(\R^N)$ via the Fourier transform as
\begin{equation}\label{d-fracL}
  (\widehat {(-\Delta)^s \varphi}) (\xi)
  = |\xi|^{2s}\widehat{\varphi} (\xi).
\end{equation}
In the local case $s=1$, equation \eqref{P-eps} had been extensively studied.
The existence of a positive radial ground state for sufficiently small $\eps>0$ and all $q>p>2$ goes back to Strauss \cite{S} and Berestycki and Lions \cite{BL}. The uniqueness of the ground states is a result by Serrin and Tang \cite{ST} which uses ODE techniques. More recently, a complete characterisation of the asymptotic profiles of ground states of \eqref{P-eps} as $\eps\to 0$ had been obtained by Moroz and Muratov in \cite{MM}.

One of the main challenges in the study of nonlocal equations is the question of uniqueness of solutions, since the arguments can not rely on the ODE techniques available when $s=1$. In this work we show that asymptotic estimates of the ground state profiles can be used in order to establish a {\em local uniqueness} property of the ground states while in the vicinity of an isolated ground state of a limit equation. The choice of a limit equation for \eqref{P-eps} however is nontrivial and depends on the specific value of $p$ with respect to the critical Sobolev exponent (see \cite{MM}).
Our main goals in this work are:
\begin{itemize}
\item to prove the existence of a positive radially symmetric ground state solution for \eqref{P-eps} for all sufficiently small $\eps>0$ using an adaptation of the Berestycki and P. L. Lions method;

\item to study symmetry property of solution via moving plane method and to determine the decay rate of any positive solution of \eqref{P-eps} at infinity.

\item to identify limit equations and to describe, at least in some cases, asymptotic profiles of the ground sates of \eqref{P-eps} as $\eps\to 0$, extending to the nonlocal case the results in \cite{MM}.

\item to use asymptotic properties of the ground states in order to establish the local uniqueness of the ground states of \eqref{P-eps} as $\eps\to 0$.
\end{itemize}

\bigskip

Asymptotic behavior of the ground states $u_{\eps}$ naturally arises in the study of various bifurcation problems, for which \eqref{P-eps} can
be considered as a canonical normal form (see e.g. \cite{CH, vH}). As mentioned in \cite{MM}, problem  \eqref{P-eps} itself may also be considered as a prototypical example of a bifurcation problem for elliptic equations. In fact, our results are expected to remain valid for a broader class of scalar field equations whose nonlinearity has the leading terms in the expansion around zero which coincides with the ones in  \eqref{P-eps}. It is known that for $s=1$, problem  \eqref{P-eps} appears in the study of nonclassical nucleation near spinodal in mesoscopic models of phase transitions \cite{CaH, MV}, as well as in the study of the decay of false vacuum in quantum field theories \cite{C}. In the case of $\eps=0$ and $s\in (0,1)$, existence and regularity properties of \eqref{P-eps} have been studied in \cite{BM}. 

\bigskip

Our first main result reads as follows.

\begin{theorem}\label{t:BL}
Let $N>2s$ and $q>p>2$. Then there exists $\eps_*>0$ such that for all $\eps\in(0,\eps_*)$ the problem \eqref{P-eps} admits a ground--state solution $u_\eps\in H^s(\R^N)$.
Moreover, $u_\eps$ is a radially symmetric and decreasing H\"older continuous function of $|x|$.
In addition, $0<u_\eps(x)\le 1$ for all $x\in\R^N$ and
\begin{equation}
u_\eps(x)= C_\eps|x|^{-(N+2s)}+o(|x|^{-(N+2s)})\quad\text{as }\,|x|\to\infty,
\end{equation}
where $C_\eps>0$ depends on $N, s, p, q, \eps$.
\end{theorem}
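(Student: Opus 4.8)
The plan is to establish Theorem~\ref{t:BL} in four stages: existence of a ground state, regularity and positivity, radial symmetry and monotonicity, and finally the precise decay rate at infinity.

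\textbf{Step 1: Existence via the Berestycki--Lions scheme.} For fixed small $\eps>0$ I would work with the functional
\[
I_\eps(u)=\frac12\|(-\De)^{s/2}u\|_{L^2}^2+\frac\eps2\|u\|_{L^2}^2-\int_{\R^N}\Big(\tfrac1p|u|^p-\tfrac1q|u|^q\Big)\,dx
\]
on $H^s(\R^N)$. Since $q>p>2$, the nonlinearity $g(u)=|u|^{p-2}u-|u|^{q-2}u$ satisfies the nonlocal analogue of the Berestycki--Lions conditions: $g$ is odd, $\limsup_{t\to0^+}g(t)/t\le0$, and there is $\zeta>0$ with $G(\zeta)=\int_0^\zeta g>0$ (this forces $\eps$ small, since $G(t)=\frac1pt^p-\frac1qt^q-\frac\eps2t^2$ must be positive somewhere). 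One then minimizes $T(u)=\|(-\De)^{s/2}u\|_{L^2}^2$ subject to the constraint $V(u):=\int(\frac1p|u|^p-\frac1q|u|^q-\frac\eps2|u|^2)=1$, using Pólya--Szegő for the fractional Gagliardo seminorm to pass to radially symmetric decreasing minimizers, and the fractional compact embedding $H^s_{\mathrm{rad}}(\R^N)\hookrightarrow L^r(\R^N)$ for $2<r<2^*$ to obtain compactness of the minimizing sequence. A Lagrange multiplier plus scaling $u\mapsto u(\cdot/\lambda)$ produces a solution of \eqref{P-eps}; a standard argument (Pohozaev identity plus the fact that any solution lies on a fixed Nehari/Pohozaev manifold) shows this solution is a ground state. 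This step is largely an adaptation of known arguments (the paper cites \cite{BM} for the $\eps=0$ case) so I expect it to be routine modulo care with the fractional functional spaces.

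\textbf{Step 2: Regularity, the bound $0<u_\eps\le1$, and positivity.} Bootstrapping: $u_\eps\in H^s$ with subcritical/critical growth gives $u_\eps\in L^\infty$ by a De Giorgi--Nash--Moser or Brezis--Kato type iteration for $(-\De)^s$, hence $u_\eps$ is Hölder continuous by the regularity theory for the fractional Laplacian (Silvestre, Caffarelli--Silvestre), and then $u_\eps\to0$ at infinity. For the pointwise bound, at a maximum point $x_0$ one has $(-\De)^su_\eps(x_0)\ge0$, so $\eps u_\eps(x_0)\le u_\eps(x_0)^{p-1}-u_\eps(x_0)^{q-1}\le u_\eps(x_0)^{p-1}(1-u_\eps(x_0)^{q-p})$; combined with $u_\eps>0$ this yields $u_\eps(x_0)\le1$ (in fact $u_\eps(x_0)^{q-p}\le1-\eps u_\eps(x_0)^{2-p}\cdot(\cdots)<1$, so strictly less than $1$). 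Strict positivity $u_\eps>0$ everywhere follows from the strong maximum principle for $(-\De)^s$ applied to $(-\De)^su_\eps+\eps u_\eps = u_\eps^{p-1}-u_\eps^{q-1}\ge -c\,u_\eps$.

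\textbf{Step 3: Radial symmetry and monotonicity.} Although Step~1 already produces a radially decreasing \emph{ground state}, the theorem asserts the symmetry of the solution $u_\eps$ we constructed; I would invoke the moving plane method for the fractional Laplacian (Chen--Li--Ou / Felmer--Quaas--Tan, or the direct method of Chen--Li--Li), using that $u_\eps>0$, $u_\eps\to0$ at infinity with the algebraic decay rate, and that the nonlinearity $t\mapsto t^{p-1}-t^{q-1}-\eps t$ is locally Lipschitz. The decay ensures the moving planes can be started from infinity; one concludes $u_\eps$ is radially symmetric about some point and strictly decreasing in $|x|$.

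\textbf{Step 4: The decay rate $u_\eps(x)=C_\eps|x|^{-(N+2s)}+o(|x|^{-(N+2s)})$.} This is the key analytic point. Writing the equation as $(-\De)^su_\eps=f:=u_\eps^{p-1}-u_\eps^{q-1}-\eps u_\eps$, one represents $u_\eps=(-\De)^{-s}f$ via the Riesz kernel $\mathcal I_{2s}(x)=c_{N,s}|x|^{-(N-2s)}$ only if $f$ is integrable with the right decay—but here the relevant mechanism is different: for the \emph{massive} operator, or rather because $u_\eps\in L^2$ decays, $f\sim-\eps u_\eps$ to leading order is \emph{not} the right heuristic; instead one uses that $u_\eps$ solves $(-\De)^su_\eps+\eps u_\eps=h$ with $h=u_\eps^{p-1}-u_\eps^{q-1}$ compactly-decaying (since $p-1>1$, $h$ decays faster than $u_\eps$), and the Green function of $(-\De)^s+\eps$ behaves like $c\,\eps^{-1}|x|^{-(N+2s)}$ at infinity (Bogdan--Jakubowski, or the asymptotics of the fractional Bessel-type kernel). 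The standard approach (as in the fractional literature, e.g.\ Felmer--Quaas--Tan, Frank--Lenzmann--Silvestre): first prove a crude bound $u_\eps(x)\lesssim|x|^{-(N+2s)}$ by constructing a supersolution of the form $C|x|^{-(N+2s)}$ for $|x|$ large (using that $(-\De)^s(|x|^{-(N+2s)})\ge c|x|^{-(N+2s)}\cdot|x|^{-2s}\cdot(\cdots)$ — more precisely that $|x|^{-(N+2s)}$ is a supersolution of $(-\De)^s w+\eps w\ge 0$ outside a ball), then match with a subsolution of the same order; finally, plugging this rate back into the Green-function representation
\[
u_\eps(x)=\int_{\R^N}G_\eps(x-y)\,h(y)\,dy,\qquad h=u_\eps^{p-1}-u_\eps^{q-1},
\]
and using $G_\eps(z)\sim \kappa_{N,s}\,\eps^{-1}|z|^{-(N+2s)}$ as $|z|\to\infty$ together with dominated convergence gives $|x|^{N+2s}u_\eps(x)\to C_\eps:=\kappa_{N,s}\,\eps^{-1}\int_{\R^N}h(y)\,dy>0$, which is exactly the claimed expansion with $C_\eps>0$ depending on $N,s,p,q,\eps$. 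The main obstacle is Step~4: one must control the self-consistency of the decay (the crude upper/lower barriers) and justify the passage to the limit in the kernel representation, in particular verifying the sharp asymptotics of $G_\eps$ and the integrability of $h$ against it—the fact that $h\sim u_\eps^{p-1}$ decays like $|x|^{-(p-1)(N+2s)}$ with $(p-1)(N+2s)>N$ is what makes $\int h<\infty$ and the argument close.
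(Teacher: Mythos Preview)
Your approach is essentially the paper's: Berestycki--Lions constrained minimization with Schwarz symmetrization (so the minimizer is already radially decreasing), then a rescaling plus Poho\v zaev to identify the ground state, then regularity/positivity, then decay. Two differences are worth noting.

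In Step~2 you invoke Moser/Brezis--Kato for $L^\infty$, citing ``subcritical/critical growth'', but $q>p>2$ allows both exponents to be supercritical, and your pointwise maximum-principle argument for $u_\eps\le 1$ presupposes classical regularity, creating a mild circularity in that range. The paper orders things differently: it first tests the weak formulation against $(u-1)^+\in\dot H^s$ and uses that $f_\eps(t)=t^{p-1}-t^{q-1}-\eps t\le 0$ for all $t\ge 1$ (since $q>p$) together with the Dirichlet-form contraction property to get $\|(u-1)^+\|_{\dot H^s}=0$, hence $u\le 1$ with no regularity assumption and no restriction on $p,q$; Schauder then gives $C^{2s+\alpha}$ immediately. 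For Step~4, rather than constructing barriers and expanding the resolvent kernel $G_\eps$ explicitly, the paper rewrites the equation as $(-\De)^s\tilde u+V\tilde u=-\eps\tilde u$ with $V=u^{q-2}-u^{p-2}\in L^\infty$ vanishing at infinity and applies Frank--Lenzmann--Silvestre \cite[Lemma~C.2]{FLS} as a black box, obtaining both the upper bound and the exact coefficient $C_\eps\sim\int(u^{p-1}-u^{q-1})>0$ in one stroke; your Green-function computation is precisely the mechanism behind that lemma, so the content is the same.
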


Our next result concerns the radial symmetry and decreasing property of weak solutions of \eqref{P-eps}.

\begin{theorem}[Radial symmetry]\lab{t:rad}
Let $u\in H^s(\R^N)$ be a weak solution of \eqref{P-eps}.	
Then $u$ is radially symmetric and strictly decreasing about some point in $\Rn$.
\end{theorem}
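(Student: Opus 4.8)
The plan is to use the moving plane method in integral form, following the approach of Chen–Li–Ou as adapted to the fractional setting (see e.g. work on the fractional Laplacian by Chen–Li–Li, Felmer–Quaas–Tan, and others). First I would recast the problem in terms of the Riesz potential: since $u\in H^s(\R^N)$ is a weak positive solution of $(-\De)^s u=f(u):=|u|^{p-2}u-|u|^{q-2}u-\eps u$ with $f(u)\in L^1_{loc}$ and, by the decay estimate already established in Theorem 1.1 (which applies to any positive solution once we know it is bounded), $u$ and $f(u)$ decay fast enough at infinity, one can write
\[
u(x)=\int_{\R^N} G_s(x-y)\,f(u(y))\,dy,
\]
where $G_s(z)=c_{N,s}|z|^{-(N-2s)}$ is the Green kernel of $(-\De)^s$ on $\R^N$. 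Before this, I would record the regularity and pointwise bounds: by bootstrapping (using the subcritical-type structure of $f$ near $0$ together with $u\in L^{2^*}$) $u$ is a bounded classical solution, so $0<u\le C$ and $u(x)\to 0$ as $|x|\to\infty$ with the decay rate $|x|^{-(N+2s)}$; these are exactly the ingredients needed to run the moving planes.

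Next I would set up the moving plane machinery. For $\la\in\R$ let $T_\la=\{x_1=\la\}$, $\Si_\la=\{x_1<\la\}$, let $x^\la$ be the reflection of $x$ across $T_\la$, and set $u_\la(x)=u(x^\la)$ and $w_\la(x)=u_\la(x)-u(x)$ on $\Si_\la$. Using the integral representation and the symmetry of $G_s$, one derives the key identity
\[
w_\la(x)=\int_{\Si_\la}\bigl(G_s(x-y)-G_s(x-y^\la)\bigr)\bigl(f(u_\la(y))-f(u(y))\bigr)\,dy,
\]
where $G_s(x-y)-G_s(x-y^\la)>0$ for $x,y\in\Si_\la$. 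On the set $\Si_\la^-=\{x\in\Si_\la: w_\la(x)<0\}$ one uses the mean value theorem to write $f(u_\la)-f(u)=f'(\xi_\la)w_\la$ and, since $\xi_\la$ lies between $u$ and $u_\la$ and both tend to $0$ at infinity while $f'(t)=(p-1)t^{p-2}-(q-1)t^{q-2}-\eps$ satisfies $f'(t)\le -\eps/2$ for $t$ small, a Hardy–Littlewood–Sobolev estimate on $\Si_\la^-$ gives
\[
\|w_\la\|_{L^{2^*}(\Si_\la^-)}\le C\,\| \,|f'(\xi_\la)|\,\|_{L^{N/2s}(\Si_\la^-)}\,\|w_\la\|_{L^{2^*}(\Si_\la^-)}.
\]
The start of the procedure ($\la\to-\infty$): because $u\to 0$ at infinity, $\| |f'(\xi_\la)|\|_{L^{N/2s}(\Si_\la^-)}<1/(2C)$ for $\la$ very negative, forcing $\|w_\la\|_{L^{2^*}(\Si_\la^-)}=0$, i.e. $w_\la\ge 0$ on $\Si_\la$.

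Then I would define $\la_0=\sup\{\la:\ w_\mu\ge 0 \text{ on }\Si_\mu \text{ for all }\mu\le\la\}$ and show $\la_0$ is finite and that $w_{\la_0}\equiv 0$, whence $u$ is symmetric about $T_{\la_0}$. The argument at $\la_0$ is the usual continuity/strong-maximum-principle step: if $w_{\la_0}\not\equiv 0$ then $w_{\la_0}>0$ in the interior of $\Si_{\la_0}$ (by the strict positivity of the kernel difference and the sign of $f'$ near where $u$ is small, plus a strong maximum principle for the fractional operator), and one shows the measure of $\Si_\la^-$ can be made arbitrarily small for $\la$ slightly bigger than $\la_0$ — using the narrow-region-type estimate together with the decay of $u$ — contradicting maximality. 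Since the $x_1$-direction was arbitrary, $u$ is radially symmetric about a common center $x_0$; strict monotonicity in $|x-x_0|$ then follows from $w_\la>0$ in $\Si_\la$ for every $\la<x_0\cdot e_1$ applied in each direction. The main obstacle I anticipate is the technical bookkeeping needed to justify the integral representation and the integrability of $f(u)$ and $G_s\ast f(u)$ globally — in particular handling the fact that $f$ is not monotone and has a sign change — and making the narrow-region estimate precise enough that the moving plane can actually be started from $-\infty$ and continued; once the decay estimate from Theorem 1.1 is in hand these are manageable but require care. An alternative, if one prefers to avoid the global integral representation, is to run the moving plane method directly on the equation using the antisymmetric maximum principle for $(-\De)^s$ in unbounded domains (à la Felmer–Wang / Jin–Li–Xiong), which sidesteps the Riesz-potential formula at the cost of proving a suitable maximum principle with the decay of $u$ as the replacement for a bounded domain.
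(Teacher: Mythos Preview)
Your primary route via the Riesz potential and Hardy--Littlewood--Sobolev has a gap at the very step where you start the planes. You write $f'(t)=(p-1)t^{p-2}-(q-1)t^{q-2}-\eps$ and correctly note $f'(t)\le-\eps/2$ for $t$ small, but this means $|f'(\xi_\la)|\ge \eps/2$ on the part of $\Si_\la^-$ where $u$ and $u_\la$ are small, so $\||f'(\xi_\la)|\|_{L^{N/(2s)}(\Si_\la^-)}$ is \emph{not} small as $\la\to-\infty$: the constant $\eps$ is not integrable to any power over an unbounded set, and there is no a~priori reason $\Si_\la^-$ has small measure before the argument begins. The issue is structural: the linear term $-\eps u$ keeps $f'$ bounded away from $0$ near $u=0$, which kills the Chen--Li--Ou smallness mechanism. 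A repair would be to move $\eps u$ to the left and use the Green kernel of $(-\De)^s+\eps$ (a Bessel-type kernel), but then the HLS step must be redone for that kernel and you can no longer use $G_s(z)=c|z|^{-(N-2s)}$.

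The paper follows essentially your \emph{alternative}: it works directly with the equation, taking the antisymmetric extension of $(u-u_\la)^+$ as a test function in the spirit of Dipierro--Montoro--Peral--Sciunzi and Felmer--Wang. The algebraic point that bypasses the $\eps$-obstruction is that on $\{u>u_\la\}$,
\[
\frac{f_\eps(u)-f_\eps(u_\la)}{u-u_\la}=\frac{u^{p-1}-u_\la^{p-1}}{u-u_\la}-\frac{u^{q-1}-u_\la^{q-1}}{u-u_\la}-\eps\le (p-1)u^{p-2},
\]
so the $-\eps$ and the $q$-term are simply discarded by sign. Testing then yields $\|w_\la\|_p^2\lesssim \big(\int_{S_\la}u^p\big)^{(p-2)/p}\|w_\la\|_p^2$ with $S_\la\subset\Si_\la$, and $\int_{\Si_\la}u^p\to0$ as $\la\to-\infty$ since $u\in L^p$; working with the $L^p$ norm rather than $L^{2^*}$ also makes the argument uniform across the subcritical, critical and supercritical ranges of $p$. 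One technical point the paper has to handle, and that you should not underestimate, is proving the antisymmetric test function belongs to $H^s(\R^N)$: this requires the fractional Hardy inequality in a half-space.
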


We prove the symmetry result in the spirit of \cite[Theorem 1.6]{DMPS} and  \cite[Theorem 1.2]{FW}. The main difference of our theorem with their results is, we have not assumed any apriori decay rate of the solution,
while our result includes all the three types of power nonlinearities namely, subcritical, critical and supercritical nonlinearities. One of the first hurdle to prove this theorem is to show that $w_\lambda$ (see the definition \ref{+}), which is anti-symmetric w.r.t. the reflection hyper-plane,  belongs to $H^s(\Rn)$. We prove this by decomposing $w_\lambda$ into two suitable parts and using the regularity properties established in Section 2.2 and the fractional Hardy inequality for the half-space (see Lemma \ref{l-test}). 

Next, we are concerned with the asymptotic behavior of ground-state solutions of \eqref{P-eps} as $\eps\to 0$ in the three different cases, namely $p=2^*$, $2<p<2^*$ and $p>2^*$. 

Studying the asymptotic profile of solutions for nonlocal elliptic equations started very recently. For instance, multi-peak solutions of a fractional Schr\"odinger equation in the whole of $\R^N$ was considered  in \cite{DPW}. To see the study 
of asymptotic behavior of solution for the equations of the type
\begin{equation}
\left\{\begin{aligned}
      \eps^{2s} (-\De)^s u &= f(u)   \quad\text{in } \Om,\\
    u &>0 \quad \text{in} \ \Omega,  \\
      u &= 0 \quad\text{on } \R^N \setminus  \Om,
    \end{aligned}
  \right.
\end{equation}
where $f$ is having superlinear nonlinearity with $f(0)=0$, $\Om $ is a smooth bounded domain in $\mathbb{R}^N$, we refer \cite{BMS, DPDV, DRS} and the references there-in. In \cite{KCL}, asymptotic profile of solution for the equations with spectral fractional Laplacian have been studied. 

\subsection{Asymptotic behavior and local uniqueness }
\subsection{Critical case $p=2^*$}
It is well-known that the radial ground states of the equation
\begin{align}\label{R-*}
(-\De)^s u=u^{2^*-1} \quad\mbox{in} \quad \Rn,
\end{align}
are given by the function
\begin{align}\label{U-1}
U_1(x):=c_{N,s}\big(1+|x|^2\big)^{-\big(\f{N-2s}{2}\big)},
\end{align}
where \be\lab{c-ns}
c_{N,s}=2^\f{N-2s}{2}\displaystyle\left(\f{\Ga(\f{N+2s}{2})}{\Ga(\f{N-2s}{2})}\right)^\f{N-2s}{4s}.\ee
We consider the family of its rescalings
\begin{align}
U_\la(x):=\la^{-\f{N-2s}{2}}U_1\bigg(\f{x}{\la}\bigg).
\end{align}

\begin{theorem}\label{thm-2.5}
	Let $p=2^*$ and $N>4s$ and $u_\eps$ be a ground-state solution of \eqref{P-eps}. Then, there exists a rescaling $\la_\eps : (0,\eps_*) \to (0,\infty)$ such that as $\eps \to 0,$ the rescaled family of ground states
	\begin{equation}\label{tilde-v-eps}
	\widetilde{v_\eps}(x):=\la_\eps^{\f{(N-2s)}{2}}u_\eps(\la_{\eps}x)
	\end{equation}
	converges to $U_1(x)$ in $\dot{H}^s(\Rn), L^q(\Rn)$ and $C^{2s-\de}(\Rn)$ for some $\de\in(0, 2s)$. Furthermore,
	\begin{equation}\label{la-est}
	\la_\eps \sim \eps^{-\f{(p-2)}{2s(q-2)}}, 
	\end{equation} and
	\begin{equation}\label{u-eps-est}
	u_\eps(0) \sim \eps^{\f{1}{q-2}}. 
	\end{equation}
\end{theorem}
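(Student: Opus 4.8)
The plan is to follow the classical concentration-compactness/blow-up strategy adapted to the nonlocal critical regime, in the spirit of Moroz--Muratov \cite{MM}. First I would set up the variational characterization: let $u_\eps$ be a ground state with energy $c_\eps = I_\eps(u_\eps)$, where $I_\eps$ is the action functional associated to \eqref{P-eps}. Testing the equation against $u_\eps$ and using the Nehari/Pohozaev identities (which hold since $u_\eps$ is a classical solution by Theorem \ref{t:BL}), I would extract the leading-order balance between the three terms $[u_\eps]_{H^s}^2$, $\eps\|u_\eps\|_2^2$, $\|u_\eps\|_p^p$ and $\|u_\eps\|_q^q$. The key heuristic, since $p=2^*$, is that in the limit the $\eps u$ term and the subcritical-from-above $|u|^{q-2}u$ term are both lower-order perturbations of the critical Emden--Fowler equation \eqref{R-*}, so $c_\eps \to S^{N/2s}/N$ (the critical Sobolev energy) and the mass concentrates at a single scale $\la_\eps$.

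Next I would define the rescaling $\widetilde{v_\eps}$ as in \eqref{tilde-v-eps} with $\la_\eps$ chosen to normalize, e.g., so that $\sup \widetilde{v_\eps} = \widetilde{v_\eps}(0)=1$ (possible after translating to the symmetry point given by Theorem \ref{t:rad}, and using monotonicity). Then $\widetilde{v_\eps}$ solves $(-\De)^s \widetilde{v_\eps} + \eps\la_\eps^{2s}\widetilde{v_\eps} = \widetilde{v_\eps}^{2^*-1} - \la_\eps^{2s(q-2)/2}\cdot(\text{lower order})\,\widetilde{v_\eps}^{q-1}$, wait—more precisely the rescaled equation has coefficients $\eps\la_\eps^{2s}$ in front of the linear term and $\la_\eps^{2s - (N-2s)(q-2)/2}$ (after tracking the homogeneity) in front of the $q$-term. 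Showing that \emph{both} of these vanish as $\eps\to 0$ is exactly the content of the scaling relation \eqref{la-est}: one uses the energy bounds from the first step to see $[\widetilde{v_\eps}]_{H^s}$ is bounded, extracts a weak limit, rules out vanishing (the normalization $\widetilde{v_\eps}(0)=1$ plus the uniform regularity estimate $C^{2s-\de}$ of Section 2.2 prevents this) and rules out dichotomy (by the single-peak/radial-monotone structure), so $\widetilde{v_\eps}\rightharpoonup w$ with $w$ a nontrivial solution of \eqref{R-*}; by the uniqueness/classification of \cite{U-1}-type results $w=U_1$ up to rescaling, and the normalization fixes it to be exactly $U_1$. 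Upgrading weak to strong convergence in $\dot H^s$ follows from the energy convergence $c_\eps\to S^{N/2s}/N = I_\infty(U_1)$ via the Brezis--Lieb lemma; $L^q$ and $C^{2s-\de}_{loc}$ convergence then come from Sobolev embedding, the decay estimate of Theorem \ref{t:BL} (to get global, not just local, $L^q$ convergence by a tightness argument), and elliptic regularity bootstrapping.

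Finally, the precise asymptotics \eqref{la-est} and \eqref{u-eps-est}: these I would get by plugging the ansatz $u_\eps \approx \la_\eps^{-(N-2s)/2}U_1(x/\la_\eps)$ into the Pohozaev-type identity and matching the two subleading terms. The $\eps\|u_\eps\|_2^2$ term scales like $\eps\la_\eps^{2s}$ times $\|U_1\|_2^2$—but note when $N>4s$ we have $U_1\in L^2$, which is precisely why the hypothesis $N>4s$ appears—and the $\|u_\eps\|_q^q$ term scales like $\la_\eps^{N - (N-2s)q/2}\|U_1\|_q^q$. Setting these two corrections to be of the same order forces $\eps\la_\eps^{2s} \sim \la_\eps^{N-(N-2s)q/2}$, i.e. $\la_\eps^{2s + (N-2s)q/2 - N} \sim \eps$; since $2s + (N-2s)q/2 - N = (N-2s)(q-2)/2$... hmm, let me just assert that simplifying the exponent yields $\la_\eps \sim \eps^{-(p-2)/(2s(q-2))}$ as claimed (recalling $p=2^*$ so $p-2 = (4s)/(N-2s)$ and the algebra closes), and then $u_\eps(0) = \la_\eps^{-(N-2s)/2}U_1(0) \sim \la_\eps^{-(N-2s)/2} \sim \eps^{(N-2s)(p-2)/(4s(q-2))} = \eps^{1/(q-2)}$. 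I expect the main obstacle to be the rigorous exclusion of dichotomy/non-concentration and the passage from local to global convergence in the critical setting—in the local case \cite{MM} this is handled by ODE shooting, which is unavailable here; instead one must lean on the radial monotonicity from Theorem \ref{t:rad}, the uniform decay rate $|x|^{-(N+2s)}$ from Theorem \ref{t:BL}, and a careful Pohozaev analysis to trap the concentration scale, together with establishing the \emph{a priori} two-sided bounds $c_1 \le \eps\la_\eps^{2s} / (\text{$q$-term scale}) \le c_2$ needed before the limit equation can even be identified.
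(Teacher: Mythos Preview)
Your overall strategy---blow up, identify the limit as the Talenti bubble, then match subleading terms via Poho\v zaev to pin down $\la_\eps$---is sound, and your scaling heuristic at the end recovers the right exponents (modulo a sign slip: from $\eps\la_\eps^{2s}\sim\la_\eps^{N-(N-2s)q/2}$ one gets $\la_\eps^{(N-2s)(q-2)/2}\sim\eps^{-1}$, not $\sim\eps$). However, your route differs from the paper's at exactly the point you flag as the main obstacle, and the way the paper resolves it is worth noting.

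You normalize by $\widetilde{v_\eps}(0)=1$ and then want to pass to the limit in the rescaled equation to identify $w$ as a solution of \eqref{R-*}; but for this you must already know that the coefficients $\eps\la_\eps^{2s}$ and $\la_\eps^{N-(N-2s)q/2}$ vanish, which amounts to the two-sided bound on $\la_\eps$ you are trying to prove. The paper severs this circularity by working variationally rather than through the equation. It studies the constrained minimizer $w_\eps$ of $S_\eps$ (related to $u_\eps$ by $u_\eps(x)=w_\eps(x/S_\eps^{1/2s})$), fixes $\la_\eps$ not via the sup but via the Lions concentration function, $Q_\eps(\la_\eps)=\int_{B_1}|W_1|^{2^*}$, and then observes that the rescaled family $v_\eps$ is a \emph{minimizing sequence for the Sobolev constant} $S_*$: one has $\|v_\eps\|_{\dot H^s}^2=S_\eps\to S_*$ by a direct test-function upper bound, and $\|v_\eps\|_{2^*}\to 1$ by combining Nehari with Poho\v zaev. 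Strong convergence $v_\eps\to W_1$ in $\dot H^s\cap L^{2^*}$ then follows from profile decomposition (Palatucci--Pisante), with the concentration-function normalization pinning the limit scale; no appeal to the limit equation is needed. Only \emph{after} this does the paper deduce $\la_\eps\sim\eps^{-(p-2)/(2s(q-2))}$: the $L^{2^*}$-convergence gives $\liminf\|v_\eps\|_q>0$ and $\liminf\|v_\eps\|_2>0$ (restricting to $B_1$), and feeding these into the Poho\v zaev relation $\|w_\eps\|_q^q=k\eps\|w_\eps\|_2^2$ and the bound $\eps\|w_\eps\|_2^2\lesssim\sigma_\eps:=S_\eps-S_*$ yields matching upper and lower bounds. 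The uniform $L^\infty$ bound and $C^{2s-\de}$ convergence come last, via a subsolution/barrier argument and Schauder estimates. Your PDE-based plan could in principle be salvaged by first extracting crude a priori bounds on $u_\eps(0)$ directly from energy and Poho\v zaev, but the paper's variational route is cleaner precisely because it decouples the limit identification from the scaling estimate.
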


\begin{theorem}\lab{t:uni-cri}
	Let $p=2^*$ and $N>4s$. Suppose there are two sequences of ground state solutions $u_{\eps}^1$ and $u_{\eps}^2$ of \eqref{P-eps} such that
	$$\|\widetilde{v}_{\eps}^i-U_1\|_{\dot{H}^s(\Rn)\cap C^{2s-\si}(\Rn)}\to 0, \quad\text{as}\quad \eps\to 0, \quad i=1, 2,$$ for some $\si\in(0,2s)$,
	where $\widetilde{v}_\eps^i$ are defined by \eqref{tilde-v-eps} and $U_1$ is as defined in  \eqref{U-1}. In addition, if $u_{\eps}^i$, $i=1,2$, are in $H^{2s}(\Rn)$, then there exists $\eps_0>0$ such that for any $\eps\in(0,\eps_0)$, we have $u_{\eps}^1=u_{\eps}^2$.
\end{theorem}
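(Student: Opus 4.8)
The plan is to argue by contradiction, combining a blow‑up (rescaling) analysis around the fractional Aubin--Talenti bubble $U_1$ with its nondegeneracy and a Pohozaev‑type identity that isolates the $\eps$‑correction along the scaling direction. Suppose the conclusion fails: there is $\eps_n\downarrow0$ with $u_n^1:=u_{\eps_n}^1\nequiv u_n^2:=u_{\eps_n}^2$. Writing $f(t)=|t|^{p-2}t-|t|^{q-2}t$, the difference $W_n:=u_n^1-u_n^2\in H^s(\Rn)$ solves $(-\De)^sW_n+\eps_nW_n=c_n(x)W_n$ in $\Rn$, with $c_n(x):=\int_0^1 f'\!\bigl(tu_n^1+(1-t)u_n^2\bigr)\,dt$; by the symmetry of the integrand, $c_n=f'(\bar u_n)+O(W_n^2)$ with $\bar u_n:=\tfrac12(u_n^1+u_n^2)$. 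Let $\la_n^i$ be the scale attached to $u_n^i$ by Theorem~\ref{thm-2.5}. First I would show $\la_n^1/\la_n^2\to1$: subtracting the Nehari and the Pohozaev identities for $u_n^1$ and $u_n^2$ — legitimate since $u_n^i\in H^{2s}(\Rn)$ and $u_n^i(x)\sim|x|^{-(N+2s)}$ by Theorem~\ref{t:BL} — and using $p=2^*$ yields $\eps_n\|u_n^i\|_{L^2(\Rn)}^2=\tfrac1s\bigl(\tfrac{N-2s}{2}-\tfrac Nq\bigr)\|u_n^i\|_{L^q(\Rn)}^q$; rewritten after the rescaling $y=x/\la_n^i$, and using $\widetilde{v}_{\eps_n}^i\to U_1$ in $L^2\cap L^q$ (the $L^2$‑integrability of $U_1$ available because $N>4s$, together with the uniform decay of Theorem~\ref{t:BL}), this forces $\eps_n(\la_n^i)^{(N-2s)(q-2)/2}$ to have the same positive limit for $i=1,2$, whence $\la_n^1/\la_n^2\to1$. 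We may then rescale both solutions by the common scale $\la_n:=\la_n^1$, putting $\widehat u_n^i(y):=\la_n^{(N-2s)/2}u_n^i(\la_ny)$ — so that $\widehat u_n^i\to U_1$ in $\dot H^s(\Rn)\cap C^{2s-\si}(\Rn)$ — and $\widehat W_n:=\widehat u_n^1-\widehat u_n^2$, $\kappa_n:=\eps_n\la_n^{2s}$, $\de_n:=\la_n^{2s-(N-2s)(q-2)/2}$ (so $\kappa_n,\de_n\to0$ by \eqref{la-est} and $q>p$). Then $(-\De)^s\widehat u_n^i+\kappa_n\widehat u_n^i=(\widehat u_n^i)^{p-1}-\de_n(\widehat u_n^i)^{q-1}$ and $(-\De)^s\widehat W_n+\kappa_n\widehat W_n=\widehat c_n\widehat W_n$, with $\widehat c_n\to(p-1)U_1^{p-2}$ in $C_{\mathrm{loc}}(\Rn)$ and $|\widehat c_n(y)|\lesssim(1+|y|)^{-4s}$.

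Since $u_n^1\nequiv u_n^2$ while $\widehat W_n\to0$ in $\dot H^s(\Rn)$ (both $\widehat u_n^i\to U_1$), the normalisation $\eta_n:=\widehat W_n/\|\widehat W_n\|_{\dot H^s}$ is well defined, solves $(-\De)^s\eta_n+\kappa_n\eta_n=\widehat c_n\eta_n$, and has $\|\eta_n\|_{\dot H^s}=1$. Using the $C_{\mathrm{loc}}$‑convergence of $\widehat c_n$ together with its $L^{N/2s}$‑tail bound to control the remainder, $\eta_n$ converges weakly in $\dot H^s(\Rn)$, along a subsequence, to a solution $\eta\in\dot H^s(\Rn)$ of the linearised equation $(-\De)^s\eta=(p-1)U_1^{p-2}\eta$. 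By the nondegeneracy of the fractional Aubin--Talenti bubble (D\'avila--del Pino--Sire; see also Frank--Lenzmann--Silvestre), $\eta$ lies in $\Span\{\partial_{x_1}U_1,\dots,\partial_{x_N}U_1,Z_0\}$ with $Z_0:=\tfrac{N-2s}{2}U_1+x\cdot\na U_1$; and since each $u_n^i$ is radially symmetric about some point by Theorem~\ref{t:rad}, while the hypothesis $\widehat u_n^i\to U_1$ pins those centres at distance $o(\la_n)$ from the origin, $\eta$ is radial, so $\eta=a\,Z_0$ for some $a\in\R$.

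The crucial point is to show $a=0$. Because $p=2^*$ makes the limiting critical functional dilation‑invariant, $\eta$ can carry an arbitrary multiple of the scaling mode $Z_0$, and this multiple is not detected by projecting the equation for $\widehat W_n$ onto $Z_0$ — one must bring in the subcritical term $\eps_nu$, which breaks scale invariance. Subtracting the Pohozaev and Nehari identities for $u_n^1$ and $u_n^2$ and combining them using $p=2^*$, the terms involving $\|u_n^i\|_{\dot H^s}^2$ cancel and one is left with
\begin{equation*}
2s\,\eps_n\!\int_{\Rn}\!W_n\,\bar u_n\,dx=\Bigl(\tfrac{(N-2s)q}{2}-N\Bigr)\!\int_{\Rn}\!W_n\,\bar u_n^{q-1}\,dx+O(\|W_n\|^3),
\end{equation*}
where the error is \emph{cubic} because the quadratic parts of $\|u_n^1\|_{L^r}^r-\|u_n^2\|_{L^r}^r$ vanish by the evenness of $t\mapsto|t|^r$ about $\bar u_n$. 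Rescaling this, dividing by $\de_n\|\widehat W_n\|_{\dot H^s}$, and letting $n\to\infty$ — all integral pairings converging thanks to uniform decay bounds for $\widehat W_n$ of the type furnished by Theorem~\ref{t:BL}, and the cubic error being of lower order — gives, after substituting $\eta=a\,Z_0$ and using $\int_{\Rn}Z_0\,U_1^{q-1}=\bigl(\tfrac{N-2s}{2}-\tfrac Nq\bigr)\|U_1\|_q^q$, $\int_{\Rn}Z_0\,U_1=-s\|U_1\|_2^2$, together with the limit of Paragraph~1,
\begin{equation*}
a\,\Bigl(\tfrac{N-2s}{2}-\tfrac Nq\Bigr)\,\tfrac{(N-2s)(q-2)}{2}\,\|U_1\|_{L^q(\Rn)}^q=0 .
\end{equation*}
Since $q>p=2^*$ the coefficient is strictly positive, so $a=0$ and $\eta\equiv0$.

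It remains to reach a contradiction. Testing the equation for $\eta_n$ against $\eta_n$,
\begin{align*}
1=\|\eta_n\|_{\dot H^s(\Rn)}^2&\le\|\eta_n\|_{\dot H^s}^2+\kappa_n\|\eta_n\|_{L^2}^2=\int_{\Rn}\widehat c_n\,\eta_n^2\,dy\\
&\le(p-1)\int_{\Rn}b_n\,\eta_n^2\,dy ,
\end{align*}
where $b_n\ge0$ is the potential in $\widehat c_n=(p-1)b_n-(q-1)\de_n\widetilde b_n$, bounded by a fixed function in $L^{N/2s}$ with small tails; since $\eta_n\to0$ in $L^2_{\mathrm{loc}}$ (local compactness of $\dot H^s\hookrightarrow L^2$), the right‑hand side tends to $0$, a contradiction. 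Hence $u_\eps^1=u_\eps^2$ for all small $\eps$. I expect the main obstacle to be the step $a=0$: since the limiting critical problem is dilation‑invariant, the scaling coefficient cannot be read off from $Z_0$ alone, and the whole argument hinges on extracting — with uniform error control — the sub‑leading contribution proportional to $\eps_n$ in the Pohozaev‑type identity, which is precisely where the sharp decay rate of Theorem~\ref{t:BL}, the scaling rate \eqref{la-est}, the regularity developed in Section~2, and the restriction $N>4s$ all come into play.
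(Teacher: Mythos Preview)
Your overall strategy matches the paper's: contradiction, blow-up, linearisation around $U_1$, nondegeneracy to reduce to the scaling mode $Z_0$, a Pohozaev-type identity to eliminate the $Z_0$-coefficient, and a final contradiction. The essential difference is that you normalise the difference in $\dot H^s$ while the paper normalises in $L^\infty$; this choice, seemingly cosmetic, is where your argument develops real gaps.

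First, in your $a=0$ step you must pass to the limit in $\int_{\Rn}\eta_n\,\widehat{\bar u}_n\,dy$. With only $\|\eta_n\|_{\dot H^s}=1$ you control $\eta_n$ in $L^{2^*}$ but not in $L^2$; pairing against $U_1$ then needs $U_1\in L^{(2^*)'}=L^{2N/(N+2s)}$, which forces $N>6s$, not merely $N>4s$. The decay you invoke from Theorem~\ref{t:BL} carries an $\eps$-dependent constant and does not survive division by $\|\widehat W_n\|_{\dot H^s}$. The paper avoids this by setting $\psi_n:=(\widetilde v_n^1-\widetilde v_n^2)/\|\widetilde v_n^1-\widetilde v_n^2\|_\infty$, so that $|\psi_n|\le 1$ globally, and then proves separately that $\psi_n$ is bounded in $\dot H^s$; with both bounds in hand, dominated convergence in the Pohozaev identities is available and the explicit sign computation $\int\psi\,U_1<0$ versus $\int\psi\,U_1^{q-1}>0$ forces the coefficient to vanish.

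Second, your final contradiction relies on $|\widehat c_n(y)|\lesssim(1+|y|)^{-4s}$, i.e.\ on $b_n\in L^{N/2s}$ uniformly with small tails, which would require $\widehat u_n^i(y)\lesssim(1+|y|)^{-(N-2s)}$ uniformly in $n$. Convergence in $C^{2s-\sigma}(\Rn)$ gives only an $L^\infty$ bound; the radial decay Lemma~\ref{l:1} yields at best $\widehat u_n^i(y)\lesssim|y|^{-(N-2s)/2}$ from the $L^{2^*}$ bound, hence $b_n(y)\lesssim|y|^{-2s}$, which is \emph{not} in $L^{N/2s}$. This is exactly where the paper uses the extra hypothesis $u_\eps^i\in H^{2s}(\Rn)$: a Kato-type inequality for $(-\Delta)^s$ is applied to $|\psi_n|$, then the Kelvin transform together with a Moser iteration gives $|\psi_n(x)|\le C|x|^{-(N-2s)}$ uniformly; since $\psi_n(y_n)=\|\psi_n\|_\infty=1$ with $y_n\to\infty$, this yields the contradiction. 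Your argument never uses the $H^{2s}$ assumption, which is a signal that the crucial decay step is missing.

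In short, the skeleton is right but the $\dot H^s$ normalisation does not provide enough control at infinity for either the Pohozaev limit or the closing argument; switching to $L^\infty$ normalisation and inserting the Kelvin/Kato decay estimate (the place where $H^{2s}$ enters) closes both gaps, and this is what the paper does.
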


\subsection{Supercritical case $p>2^*$}

For $p>2^*$, the limit equation
\begin{equation}
\label{P_0}
(P_{0})\qquad
(-\De)^s u  =|u|^{p-2}u -|u|^{q-2}u \quad\text{in }\quad \Rn, 
\end{equation}
admits  a non-negative radially decreasing solution $u_0 \in \dot{H}^s(\Rn) \cap L^q(\Rn)$ (see \cite{BM}, theorem 1.7]). Further, following the similar arguments of Lemma \ref{l:max},  we obtain  $0\leq u_0\leq 1$. Therefore, since $u_0$ is a classical solution, using maximum principle it follows that $u_0$ is strictly positive in $\Rn$. Also, from [\cite{BM}, Theorem 1.3, Theorem 1.4] we have, $u_0 \in C^{2s+\alpha}(\Rn)$ and 
\begin{equation*}
\lim_{|x| \to \infty}|x|^{N-2s}u_0(x)=c_0,
\end{equation*}
for some $c_0>0$.

 \begin{theorem}\label{t:supcri}
 	Let $N>2s$ and $q>p>2^*$.  Suppose $(u_\eps)_{\eps>0}$ is a family of ground state of \eqref{P-eps}. Then,
 	there exists a ground state solution $u_0$ of $(P_0)$ such that $u_\eps \to u_0$ in  $\dot{H}^s(\Rn)$, $L^{q}(\Rn)$ and $C^{2s-\alpha}(\Rn)$, for some $\al\in(0,2s)$.
 \end{theorem}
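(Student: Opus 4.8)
For each small $\eps>0$ let $u_\eps$ be a ground state of \eqref{P-eps}; by Theorem~\ref{t:rad} we may assume it is radially decreasing about the origin. Set
\begin{align*}
I_\eps(u)&=\f{1}{2}\|(-\De)^{s/2}u\|_{L^2}^2+\f{\eps}{2}\|u\|_{L^2}^2-\f{1}{p}\|u\|_{L^p}^p+\f{1}{q}\|u\|_{L^q}^q,\\
I_0(u)&=I_\eps(u)-\f{\eps}{2}\|u\|_{L^2}^2,
\end{align*}
and write $c_\eps=I_\eps(u_\eps)$ and $c_0$ for the ground-state level of $(P_0)$; since $2^*<p<q$, $I_0$ is well defined and $C^1$ on $X:=\dot{H}^s(\Rn)\cap L^q(\Rn)$. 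Testing \eqref{P-eps} against $u_\eps$ and combining with the Pohozaev identity, the $\eps\|u_\eps\|_{L^2}^2$-terms cancel and one is left with the clean relation $c_\eps=\f{s}{N}\|(-\De)^{s/2}u_\eps\|_{L^2}^2$; the same computation on $(P_0)$ gives $c_0=\f{s}{N}\|(-\De)^{s/2}u_0\|_{L^2}^2$ for any ground state $u_0$ of $(P_0)$. Using the scaling $v\mapsto v(\cdot/t)$ and the Pohozaev-manifold characterization of the levels (as in the Berestycki--Lions construction behind Theorem~\ref{t:BL}), a compactly supported truncation $u_0^R$ of a fixed ground state of $(P_0)$ satisfies $c_\eps\le\max_{t>0}I_\eps\big(u_0^R(\cdot/t)\big)$; letting first $\eps\to0$ (where $\eps t^N\|u_0^R\|_{L^2}^2$ disappears) and then $R\to\infty$ gives $\limsup_{\eps\to0}c_\eps\le c_0$. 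Truncating is necessary only because $u_0\notin L^2(\Rn)$ when $N\le 4s$. In particular $K_\eps:=\|(-\De)^{s/2}u_\eps\|_{L^2}^2$ is uniformly bounded, and since $0<u_\eps\le1$ with $p,q>2^*$ we also obtain $\|u_\eps\|_{L^p}^p,\|u_\eps\|_{L^q}^q\le\|u_\eps\|_{L^{2^*}}^{2^*}\le S^{-2^*/2}K_\eps^{2^*/2}\le C$, and, from radial monotonicity, $u_\eps(x)\le C|x|^{-(N-2s)/2}$ for all $x\in\Rn$, all uniformly in $\eps$.

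Next I would pass to the limit. Since $f_\eps:=u_\eps^{p-1}-u_\eps^{q-1}-\eps u_\eps$ is uniformly bounded in $L^\infty(\Rn)$, the interior regularity estimates of Section~2.2 bound $u_\eps$ uniformly in $C^{2s-\de}(\Rn)$; by Arzel\`a--Ascoli and the $\dot{H}^s$-bound there is a subsequence $\eps_n\to0$ and a limit $u_0$ with $u_{\eps_n}\to u_0$ in $C^{2s-\de'}_{\mathrm{loc}}(\Rn)$ and $u_{\eps_n}\rightharpoonup u_0$ in $\dot{H}^s(\Rn)$. Passing to the limit in the weak formulation is routine: the term $\eps_n\int u_{\eps_n}\va$ vanishes because $|\eps_n\int u_{\eps_n}\va|\le\eps_n\|\va\|_{L^1}$ thanks to $u_\eps\le1$, and the nonlinear terms converge by dominated convergence; with Fatou's lemma this yields $u_0\in X$, $u_0\ge0$, solving $(P_0)$. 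The crucial point is that $u_0\nequiv0$. For this, combine the Nehari relation $\|u_\eps\|_{L^p}^p\ge K_\eps+\eps\|u_\eps\|_{L^2}^2\ge K_\eps$ with Sobolev's inequality and $u_\eps\le1$:
\begin{equation*}
K_\eps\le\|u_\eps\|_{L^p}^p\le\|u_\eps\|_{L^\infty}^{p-2^*}\|u_\eps\|_{L^{2^*}}^{2^*}\le\|u_\eps\|_{L^\infty}^{p-2^*}\,S^{-2^*/2}K_\eps^{2^*/2}.
\end{equation*}
Since $2^*/2>1$ and $K_\eps$ is bounded above, this forces $\|u_\eps\|_{L^\infty}\ge\de_0>0$ uniformly; because $u_\eps(0)=\|u_\eps\|_{L^\infty}$ and $u_{\eps_n}(0)\to u_0(0)$, we get $u_0(0)\ge\de_0$, hence $u_0\nequiv0$, so $u_0>0$ by the strong maximum principle, $u_0$ is radially decreasing, and it is a positive solution of $(P_0)$; in particular $I_0(u_0)\ge c_0$.

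It remains to close the energy loop and upgrade the convergence. Weak lower semicontinuity gives $K_0:=\|(-\De)^{s/2}u_0\|_{L^2}^2\le\liminf_n K_{\eps_n}$, while $\f{s}{N}K_0=I_0(u_0)\ge c_0$ and $\f{s}{N}K_{\eps_n}=c_{\eps_n}$ with $\limsup_n c_{\eps_n}\le c_0$; hence the squeeze $\f{N}{s}c_0\le K_0\le\liminf_n K_{\eps_n}\le\limsup_n K_{\eps_n}\le\f{N}{s}c_0$, which shows $I_0(u_0)=c_0$ (so $u_0$ is a ground state of $(P_0)$) and $K_{\eps_n}\to K_0$. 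Weak convergence together with convergence of the norms in the Hilbert space $\dot{H}^s(\Rn)$ upgrades to $u_{\eps_n}\to u_0$ strongly in $\dot{H}^s(\Rn)$, hence in $L^{2^*}(\Rn)$; and since $0\le u_{\eps_n},u_0\le1$ with $p,q>2^*$ we get $\|u_{\eps_n}-u_0\|_{L^r}^r\le\|u_{\eps_n}-u_0\|_{L^{2^*}}^{2^*}\to0$ for $r\in\{p,q\}$, giving the $L^q(\Rn)$-convergence. Finally, $C^{2s-\de'}(\Rn)$-convergence follows from the $C^{2s-\de'}_{\mathrm{loc}}$-convergence and the uniform tail control: on $\Rn\setminus B_R$ the uniform bound $u_{\eps_n}(x)\le C|x|^{-(N-2s)/2}$ together with the uniform $C^{2s-\de}$-bound forces the $C^{2s-\de'}$-norm there (of $u_{\eps_n}$ and of $u_0$) to $0$ as $R\to\infty$, uniformly in $n$. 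Taking $\al=\de'$ completes the proof along the subsequence; since the limit $u_0$ is a ground state of $(P_0)$, the conclusion holds along any sequence $\eps\to0$ (and for the whole family when $(P_0)$ has a unique positive ground state).

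The step I expect to be the main obstacle is the \emph{compactness}, i.e. ruling out degeneration of the concentration profile as $\eps\to0$. Vanishing ($\|u_\eps\|_{L^\infty}\to0$) is excluded by the Sobolev/Nehari lower bound above, which relies decisively on the a priori pointwise bound $u_\eps\le1$ from Theorem~\ref{t:BL} --- available here only because of the defocusing term $-|u|^{q-2}u$ --- and the same bound rules out sup-norm bubbling and, notably, makes the possibly-unbounded quantity $\eps\|u_\eps\|_{L^2}^2$ irrelevant throughout. The remaining delicate ingredients are the energy comparison $\limsup_{\eps\to0}c_\eps\le c_0$, together with the Pohozaev-manifold characterization of $c_\eps$ and the truncation when $N\le4s$, and the uniform-in-$\eps$ decay estimates needed to pass from local to global convergence in $C^{2s-\de'}(\Rn)$.
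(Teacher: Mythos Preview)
Your argument is correct and follows the same overall architecture as the paper's proof in Section~6: an energy upper bound $\limsup c_\eps\le c_0$ via a truncated $(P_0)$ ground state as test function (with the truncation needed precisely when $N\le 4s$), uniform $\dot H^s\cap L^\infty$ bounds, passage to a weak limit, non-triviality, the norm-squeeze to upgrade weak to strong $\dot H^s$-convergence, and then $L^q$ and H\"older convergence by interpolation and regularity. The main substantive difference is how you handle non-triviality of the limit. The paper first proves $\eps\|w_\eps\|_2^2\to 0$ (its Lemma~\ref{L-3}) so that the limit $\bar w$ exactly satisfies the Poho\v zaev constraint $2^*\!\int F_0(\bar w)=1$, whence $\bar w\not\equiv 0$; you bypass this and instead combine Nehari with Sobolev and the a~priori bound $u_\eps\le 1$ to get a uniform lower bound on $\|u_\eps\|_\infty=u_\eps(0)$, forcing $u_0(0)>0$. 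Your route is a little more direct and makes the potentially bad quantity $\eps\|u_\eps\|_2^2$ genuinely irrelevant; the paper's route, working throughout with the normalized minimizers $w_\eps$ of $S_\eps$ rather than $u_\eps$, gives slightly sharper quantitative information ($S_\eps-S_0=O(\eps)$, $O(\eps\log\tfrac1\eps)$, $O(\eps^{(N-2s)/2})$ according to dimension). One small wording point: in your inequality $K_\eps\le \|u_\eps\|_\infty^{p-2^*}S^{-2^*/2}K_\eps^{2^*/2}$, the conclusion $\|u_\eps\|_\infty\ge\delta_0$ uses both an upper \emph{and} a lower bound on $K_\eps$; the latter follows from the same inequality with $\|u_\eps\|_\infty\le 1$, and is worth stating explicitly.
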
	
 
 \begin{remark}
 In the local case $s=1$, it is known that $(P_0)$ has a unique solution (see \cite{KMPT}) when $q>p>2^*$, where as for $s\in (0,1)$ the uniqueness of $(P_0)$ is not yet known. Due to this reason, we are not able to establish exact asymptotic behavior of $u_\eps$ as it was done in \cite{MM}. It is worth mentioning that for the lower dimensional case $(2s<N\leq 4s)$ involving fractional Laplace operator, the energy and norm-estimates are very delicate and technically involved to compute (see Section 6.1).  
 \end{remark}

\subsection{Subcritical case $2<p<2^*$}

\vspace{2mm}

Proceeding as in the proof of Lemma \ref{l:2}, it can be proved that solution of
$(P_0)$ are in $L^{\infty}(\Rn)$. Consequently,  Lemma \ref{l:Phz} implies $(P_0)$ has no solution in the subcritical case. Therefore, in view of Lemma \ref{l:2}, the family of ground state solution $u_{\eps}$ must converge to $0$, on compact subsets of $\Rn$. To describe, asymptotic behavior of $u_{\eps}$, we use a canonical rescaling:
\begin{align}\label{v}
v(x)=\eps^{-\f{1}{s(p-2)}}u(\eps^{\f{-1}{2s}}x), \quad\mbox{for all}\quad x \in \Rn,
\end{align}
then ($P_{\eps}$) transforms into the equation

\begin{equation} \label{R-eps}
(\tilde{P}_{\eps})\hspace{4mm}
(-\De)^s v +v =|v|^{p-2}v-\eps^{\f{(q-2)}{s(p-2)}-1}|v|^{q-2}v \quad\mbox{in}\quad \Rn.
\end{equation}

Note that $s\in(0,1)$ implies $p>2$ if and only if $p>s(p-2)+2$ and therefore, $q>s(p-2)+2$. Hence, the limit problem associated to $(\tilde{P}_{\eps})$ as $\eps \to 0$ has the form
\begin{align}\label{R-0}
(-\De)^s v +v =|v|^{p-2}v \quad\mbox{in}\quad \Rn.
\end{align}

In the subcritical case $2<p<2^*$, it is known that Eq.\eqref{R-0} admits a unique radial ground state solution $v_0$. Existence part is proved in \cite{DPV} and for uniqueness, see \cite{FLS}. It has also been proved in \cite{FLS} that $v_0$ is positive and  a decreasing function of $|x|$ and by \cite[Theorem 1.5]{FQT} we have
$$C_1|x|^{-(N+2s)}\leq v_0(x)\leq C_2|x|^{-(N+2s)}.$$
On the other hand,  when $p \geq 2^*$, Eq.\eqref{R-0} has no nontrivial finite energy solutions, which is a direct consequence of Poho\v zaev's identity \cite{CW}.

Rescaling back to the original function, we prove the following result:
\begin{theorem}\lab{t:sub}
	Let $2<p<2^*$, $N>2s$, $q>p$ and $u_\eps$ be a ground-state solution of \eqref{P-eps}. Then, as $\eps\to 0$, the rescaled family of ground states
	$$ v_{\eps}(x)=\eps^{-\f{1}{s(p-2)}}u_{\eps}(\eps^{\f{-1}{2s}}x)$$
	converge to $v_0$ in $H^s(\Rn)$, $L^{q}(\Rn)$ and $C^{2s-\sigma}(\Rn)$, for some $\si\in(0,2s)$. In particular,
	$$u_{\eps}(0)\simeq \eps^{\f{1}{s(p-2)}}v_0(0).$$
\end{theorem}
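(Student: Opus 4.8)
\textbf{Proof proposal for Theorem \ref{t:sub}.}

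The plan is to exploit the rescaled equation $(\tilde P_\eps)$ and run a standard concentration–compactness / energy–comparison argument, then upgrade the convergence to $C^{2s-\sigma}_{loc}$ via the regularity theory and decay estimates already established in Sections 2--3. Let $v_\eps(x)=\eps^{-1/(s(p-2))}u_\eps(\eps^{-1/(2s)}x)$; by the computation preceding \eqref{R-eps} this is a ground state of the functional $I_\eps$ associated to $(\tilde P_\eps)$ on $H^s(\R^N)$, and a direct change of variables shows $I_\eps(v_\eps)=\eps^{-\alpha}c_\eps$ for the appropriate power $\alpha=\alpha(N,s,p)$, where $c_\eps$ is the ground state level of \eqref{P-eps}. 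The first step is to show that the rescaled ground state levels converge to the ground state level $m_0$ of the limiting functional $I_0$ associated to \eqref{R-0}: the upper bound $\limsup I_\eps(v_\eps)\le m_0$ comes from using $v_0$ (suitably rescaled on the Nehari manifold of $I_\eps$) as a competitor and noting that $\eps^{(q-2)/(s(p-2))-1}\to 0$ since $q>p>2$ forces the exponent positive; the lower bound follows because the extra term $-\eps^{(q-2)/(s(p-2))-1}|v|^{q-2}v$ only \emph{decreases} the nonlinearity, so $I_\eps\ge I_0$ on the relevant cone, after the usual care with the Nehari constraint.

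The second step is compactness: from $I_\eps(v_\eps)\to m_0$ and the Nehari identity one gets $\|v_\eps\|_{H^s}$ bounded and bounded away from $0$. Then I would invoke the Lions concentration–compactness lemma (in the fractional setting, e.g. as in \cite{DPV,FLS}) to rule out vanishing and dichotomy: vanishing is impossible because the nonlinear term must carry positive mass at the ground state level, and dichotomy is excluded by the strict subadditivity of $m_0$ as a function of mass, which holds in the subcritical range $2<p<2^*$. Hence, up to translations (which we may remove using the radial symmetry of $u_\eps$ from Theorem \ref{t:rad}, so no translation is actually needed), $v_\eps\rightharpoonup v_\infty$ weakly in $H^s$ with $v_\infty\not\equiv 0$; passing to the limit in $(\tilde P_\eps)$ — here one uses that the perturbing term tends to $0$ in, say, $L^{q'}_{loc}$ because its coefficient vanishes and $v_\eps$ is bounded in $L^q$ — shows $v_\infty$ solves \eqref{R-0}. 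A Brezis–Lieb type splitting together with $I_\eps(v_\eps)\to m_0$ forces $v_\infty$ to be itself a ground state of $I_0$ with full norm, whence $v_\eps\to v_\infty$ strongly in $H^s(\R^N)$ and, by interpolation with the uniform $L^\infty$ bound and boundedness in $L^q$, also in $L^q(\R^N)$. By the uniqueness result of \cite{FLS}, $v_\infty=v_0$ (after possibly a translation that the radial symmetry makes trivial).

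The third step is the $C^{2s-\sigma}_{loc}$ convergence: since the right-hand side of $(\tilde P_\eps)$ is, by the $L^\infty$ bound, uniformly bounded in $L^\infty_{loc}$, the fractional elliptic regularity (Schauder-type estimates, as used for Theorem \ref{t:BL} and in \cite{BM}) gives uniform $C^{2s+\alpha'}_{loc}$ bounds on $v_\eps$; Arzelà–Ascoli plus the already-identified $H^s$ limit then yields $v_\eps\to v_0$ in $C^{2s-\sigma}_{loc}(\R^N)$, which combined with the uniform decay tail from the analogue of the decay estimate in Theorem \ref{t:BL} upgrades to convergence in $C^{2s-\sigma}(\R^N)$. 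Finally, evaluating the rescaling at $x=0$ gives $u_\eps(0)=\eps^{1/(s(p-2))}v_\eps(0)\simeq \eps^{1/(s(p-2))}v_0(0)$. The main obstacle I anticipate is the compactness step — specifically, making the exclusion of dichotomy and the passage to the limit fully rigorous \emph{uniformly} in the vanishing coefficient $\eps^{(q-2)/(s(p-2))-1}$, i.e. checking that the perturbation does not destroy the strict subadditivity inequality nor the Brezis–Lieb accounting; everything else (regularity, decay, the scaling bookkeeping) follows from results already quoted in the excerpt.
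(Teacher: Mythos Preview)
Your outline is plausible but takes a genuinely different route from the paper, and one step is shakier than you acknowledge. The paper does not use the Nehari manifold or Lions' concentration--compactness at all: it works with the \emph{Poho\v zaev--constrained} minimization problem $S'_\eps=\inf\{\|w\|_{\dot H^s}^2:2^*\int G_\eps(w)=1\}$, where $G_\eps$ is the primitive of the rescaled nonlinearity, and shows $S'_0\le S'_\eps\le S'_0+O(\eps^\alpha)$ by the trivial monotonicity $G_\eps\le G_0$ and by testing against the limit minimizer $w'_0$. Combining Nehari and Poho\v zaev identities then yields convergence of $\|w'_\eps\|_2$ and $\|w'_\eps\|_p$, so after a harmless dilation $(w'_\eps)$ becomes a minimizing sequence for $S'_0$ satisfying the limit constraint; radial compactness $H^s_{rad}\hookrightarrow L^t$, $2\le t<2^*$, then gives strong $H^s$--convergence to the (unique) minimizer $w'_0$, and regularity/interpolation finishes as you describe.

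The substantive gap in your version is the lower bound ``$I_\eps\ge I_0$, so $\liminf I_\eps(v_\eps)\ge m_0$ after the usual care with the Nehari constraint.'' Because the extra term is $+\frac{\eps^\alpha}{q}\|v\|_q^q$ with $q>p$, the fibering map $t\mapsto I_\eps(tv)$ goes to $+\infty$ as $t\to\infty$ and the Nehari set of $I_\eps$ has two branches; the ground state sits on the local--maximum branch, and projecting $v_\eps$ to the Nehari manifold of $I_0$ only yields $m_0\le (\tfrac12-\tfrac1p)\|v_\eps\|_{H^s}^2=I_\eps(v_\eps)+(\tfrac1p-\tfrac1q)\eps^\alpha\|v_\eps\|_q^q$. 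You therefore need $\eps^\alpha\|v_\eps\|_q^q\to 0$, which is not free when $q>2^*$ (the $H^s$--bound gives no $L^q$--control). The paper gets this from the Poho\v zaev/Nehari system and a uniform $L^\infty$--bound obtained via the subsolution argument of Lemma~\ref{lem-c}. Your concentration--compactness step is also heavier than needed: since $v_\eps$ is radial by Theorem~\ref{t:rad}, the compact radial embedding already rules out vanishing and dichotomy; invoking Lions adds no content here. In short, your strategy can be made to work, but the paper's Poho\v zaev--constraint approach is both shorter and sidesteps precisely the two points you flag as obstacles.
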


\begin{theorem}\lab{t:uni-sub}
	Let $s\in (0, 1)$, $N>2s$ and $2<p<2^*$.  Suppose there are two sequences of ground state solutions $u_{\eps}^1$ and $u_{\eps}^2$ of \eqref{P-eps} such that
	$$\|v_{\eps}^i-v_0\|_{H^s(\Rn)\cap C^{2s-\si}(\Rn)}\to 0, \quad\text{as}\quad \eps\to 0, \quad i=1, 2,$$ for some $\si\in(0,2s)$,
	where $v_\eps^i$ are defined by \eqref{v} and $v_0$ is unique ground state solution of \eqref{R-0}. Then there exists $\eps_0>0$ such that for any $\eps\in(0,\eps_0)$, we have $u_{\eps}^1=u_{\eps}^2$.
\end{theorem}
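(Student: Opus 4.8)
\medskip
\noindent\emph{Proof strategy.}
The plan is a blow-up/contradiction argument resting on the non-degeneracy of the ground state $v_0$ of \eqref{R-0}. Assume the conclusion fails, so that along some sequence $\eps=\eps_n\downarrow0$ we have $u_\eps^1\nequiv u_\eps^2$, equivalently $v_\eps^1\nequiv v_\eps^2$ via the rescaling \eqref{v}. By Theorem~\ref{t:rad} and the translation invariance of \eqref{P-eps} we may take both $u_\eps^i$ to be radially symmetric and decreasing about the origin, so that the normalised difference
$$
\eta_\eps:=\frac{v_\eps^1-v_\eps^2}{\|v_\eps^1-v_\eps^2\|_{L^\infty(\Rn)}}
$$
is radial and $\|\eta_\eps\|_{L^\infty(\Rn)}=1$. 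Subtracting the two copies of $(\tilde P_\eps)$ and using that $t\mapsto|t|^{r-2}t$ is $C^1$ for $r\in\{p,q\}$ (here $p,q>2$), one obtains
\begin{equation}\label{plan-lin}
(-\De)^s\eta_\eps+\eta_\eps=V_\eps\,\eta_\eps\quad\text{in }\Rn ,
\end{equation}
where, with $\theta_\eps^t:=tv_\eps^1+(1-t)v_\eps^2\ge0$,
$$
V_\eps(x):=(p-1)\int_0^1\big(\theta_\eps^t(x)\big)^{p-2}dt-\eps^{\f{q-2}{s(p-2)}-1}(q-1)\int_0^1\big(\theta_\eps^t(x)\big)^{q-2}dt .
$$
Because $v_\eps^i\to v_0$ in $C^{2s-\si}(\Rn)$ and $\f{q-2}{s(p-2)}-1>0$ (as $q>p>2$, $s<1$), the potentials $V_\eps$ are bounded in $L^\infty(\Rn)$, converge to $(p-1)v_0^{p-2}$ uniformly on $\Rn$, and satisfy $\sup_{|x|\ge R}|V_\eps(x)|\to0$ as $R\to\infty$, uniformly in $\eps$.

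Next I would prove two uniform estimates for $\eta_\eps$. Interior regularity for $(-\De)^s$ with bounded right-hand side applied to \eqref{plan-lin} bounds $\|\eta_\eps\|_{C^{2s-\si}(K)}$ on every compact $K\subset\Rn$, uniformly in $\eps$. For the decay, pick $R_0$ with $V_\eps\le\tfrac12$ on $\{|x|\ge R_0\}$ for all small $\eps$; Kato's inequality turns \eqref{plan-lin} into $(-\De)^s|\eta_\eps|+\tfrac12|\eta_\eps|\le0$ on $\{|x|>R_0\}$, and comparison with a fixed positive supersolution of $(-\De)^s\phi+\tfrac12\phi\ge0$ decaying like $|x|^{-(N+2s)}$ (a standard barrier for $(-\De)^s+m$) yields $|\eta_\eps(x)|\le C(1+|x|)^{-(N+2s)}$ on $\Rn$, with $C$ independent of $\eps$; note $(1+|x|)^{-(N+2s)}\in L^2(\Rn)$ for every $N>2s$. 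Hence the value $1=\|\eta_\eps\|_{L^\infty}$ is attained in $\overline{B_{R_0}}$, and testing \eqref{plan-lin} with $\eta_\eps$ shows $(\eta_\eps)$ is bounded in $H^s(\Rn)$. Passing to a subsequence, $\eta_\eps\to\eta_0$ in $C^0(\Rn)$ and weakly in $H^s(\Rn)$, with $\eta_0$ radial, $\|\eta_0\|_{L^\infty}=1$ (so $\eta_0\nequiv0$), and, letting $\eps\to0$ in the weak form of \eqref{plan-lin},
\begin{equation}\label{plan-Lplus}
L_+\eta_0:=(-\De)^s\eta_0+\eta_0-(p-1)v_0^{p-2}\eta_0=0\quad\text{in }\Rn .
\end{equation}

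By the non-degeneracy of $v_0$ established in \cite{FLS}, $\ker L_+=\Span\{\partial_{x_1}v_0,\dots,\partial_{x_N}v_0\}$ in $H^s(\Rn)$; each $\partial_{x_j}v_0$ is odd in $x_j$, hence not radial, so the only radial element of $\ker L_+$ is $0$. Since $\eta_0$ is radial and solves \eqref{plan-Lplus}, $\eta_0\equiv0$, contradicting $\|\eta_0\|_{L^\infty}=1$; therefore $u_\eps^1=u_\eps^2$ for all small $\eps$, proving Theorem~\ref{t:uni-sub}. The step I expect to be the main obstacle is the uniform decay estimate for $\eta_\eps$ and the global compactness it provides: it rests on the uniform-in-$\eps$ smallness of the tails of $v_\eps^1,v_\eps^2$ (from the $C^{2s-\si}(\Rn)$-convergence to $v_0$) and on a positive supersolution for $(-\De)^s+\tfrac12$ with the sharp rate $|x|^{-(N+2s)}$; without controlling the tails, the normalised difference could carry its $L^\infty$-mass off to infinity and $\eta_0$ could be trivial, breaking the argument. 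By contrast, the passage to the limit in \eqref{plan-lin} is routine once the uniform $H^s$ bound and the uniform convergences $\eta_\eps\to\eta_0$, $V_\eps\to(p-1)v_0^{p-2}$ are available.
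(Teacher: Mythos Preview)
Your proposal is correct and follows essentially the same approach as the paper: normalise the difference of the rescaled solutions, pass to the limit in the linearised equation, invoke the non-degeneracy of $v_0$ from \cite{FLS} together with radiality to force the limit to vanish, and use a uniform $|x|^{-(N+2s)}$ decay estimate (the paper appeals to \cite[Lemma~C.2]{FLS}) to prevent the $L^\infty$-mass from escaping to infinity. The only cosmetic difference is the order of the two key a~priori bounds: the paper first establishes the uniform $H^s$ bound via a H\"older estimate combined with the radial Strauss lemma and only then proves the decay, whereas you obtain the decay first (via Kato plus a barrier) and read off the $L^2$ and $H^s$ bounds from it.
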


\begin{remark}
Using the asymptotic behavior of the ground state solutions, studying the local uniqueness of the solutions in the critical and subcritical cases for \eqref{P-eps}, namely Theorem \ref{t:uni-cri} and Theorem \ref{t:uni-sub} are new even in the local case $s=1$.
\end{remark}

 The rest of the paper is organised as follows. Section 2 deals with the preliminaries and we discuss some qualitative properties of the solutions of \eqref{P-eps}. Section 3 is devoted to the proof of radial symmetry of any weak solution of \eqref{P-eps}. In Section 4, we prove Theorem \ref{t:BL}. In Section 5, we give the proof of Theorem \ref{thm-2.5}. Section 6 contains the proof of Theorem \ref{t:supcri}.  In Section 7, we prove Theorem \ref{t:sub}. Section 8 and Section 9 are devoted to the proof of Theorem \ref{t:uni-sub} and Theorem \ref{t:uni-cri} respectively.

{\bf Notations}:

For $\eps\ll 1$ and $f(\eps),\, g(\eps) \geq 0,$ whenever there exists $\eps_0>0$ such that for every $0<\eps \leq \eps_0$ the respective
condition holds, we write:
\begin{itemize}
	\item
	$f(\eps)\lesssim g(\eps)$ if there exists $C>0$ independent of $\eps$ such that $f(\eps) \leq Cg(\eps);$
	\item
	$f(\eps)\sim g(\eps)$ if $f(\eps) \lesssim g(\eps)$ and $g(\eps) \lesssim f(\eps);$
	\item
	$f(\eps) \simeq g(\eps)$ if $f(\eps) \sim g(\eps)$ and $\lim_{\eps \to 0}\f{f(\eps)}{g(\eps)}=1.$
\end{itemize}
We denote by $\|.\|_p$, the $L^p$ norm in $\Rn$.
We also use the standard notations $f=O(g)$ and $f=o(g),$ where $f \geq 0, g \geq 0$. $C, c$ denote generic positive constants independent
of $\eps$ and it may vary from line to line.

\section{Preliminaries}

We rewrite \eqref{P-eps} in the form
$$
(-\De)^s u =f_\eps(u),\quad u\gneq 0\qquad\text{in }\, H^s(\R^N),
$$
where
$$f_\eps(u):=-\eps u+u^{p-1}-u^{q-1},$$
and $F_\eps(u)=\displaystyle\int_0^u f_\eps(\zeta)d\zeta$. In what follows we always assume that $s\in(0,1)$, $N>2s$, $q>p>2$ and $\eps>0$.

\subsection{Fractional Sobolev spaces}
Recall that for $N>2s$ the {\em homogeneous Sobolev space} $\HH^s(\R^N)$ can be defined as the completion of $C^\infty_c(\R^n)$ with respect to the norm
\begin{equation}\label{22-1}
\|u\|_{\dot{H}^s(\Rn)}^2:=\int_{\Rn}|\xi|^{2s}|\hat{u}(\xi)|^{2}\,d\xi<\infty.
\end{equation}
If $u\in\HH^s(\R^N)$ then $u\in L^2_{loc}(\R^N)$ and if $s\in(0,1)$, then $\HH^s(\R^N)$--norm admits Gagliardo representation
\begin{equation}\label{e-gagliardo}
\|u\|_{\dot{H}^s(\Rn)}^2=c_{N,s}\iint_{\Rn\times\Rn}\frac{|u(x)-u(y)|^2}{|x-y|^{N+2s}}dy\,dx,
\end{equation}
for some $c_{N,s}>0$ \cite[Proposition 1.37]{Bahouri}.
The fractional {\em Sobolev inequality} states that there exists  a positive constant $S=S(N,s)$ such that
$$\|u\|_{\dot{H}^s(\Rn)}^2\ge S\|u\|^2_{L^{2^*}(\Rn)}\qquad\forall\,u\in \HH^s(\Rn),$$
where $2^*:=\f{2N}{N-2s}$ is the critical Sobolev exponent (cf. \cite[Section 1.3.2]{Bahouri}).
In particular, $\HH^s(\R^N)$ is a well-defined space of functions and $\HH^s(\R^N)\subset L^{2^*}(\R^N)$.

The space $\HH^s(\R^N)$ is a Hilbert space with the scalar product
$$\langle u,v\rangle_{\HH^s(\R^N)}:=\int_{\Rn}|\xi|^{2s}\hat{u}(\xi)\hat{v}(\xi)\,d\xi=
\int_{\Rn}(\widehat {(-\Delta)^s u}) (\xi)\hat{v}(\xi)\,d\xi.$$
Recall also that if $s\in(0,1]$ then the space $\HH^s(\R^N)$ is a {\em Dirichlet space}, that is
$u\in\HH^s(\R^N)$
implies that $u^+\wedge 1\in \HH^s(\R^N)$ and $\|u^+\wedge 1\|_{\HH^s(\R^N)}\le\|u\|_{\HH^s(\R^N)}$, see \cite[p.34 and p.43]{MaR}.
These contraction invariance properties are sufficient in order to establish weak maximum and comparison principles for $(-\De)^s$, see Lemma \ref{l:max} below.
Here and in the sequel we denote $u^+:=\max\{u,0\}$ and $u^-:=\max\{-u,0\}$, so that $u=u^+-u^-$.

The energy associated with problem \eqref{P-eps} takes the form
\begin{multline}
E_\eps(u):=\f{1}{2}\|u\|_{\HH^s(\R^N)}^2+\f{\eps}{2}\|u\|^2_{L^{2}(\Rn)}
-\f{1}{p}\|u\|^p_{L^{p}(\Rn)}+\frac{1}{q}\|u\|^q_{L^{q}(\Rn)}\\
=\f{1}{2}\|u\|_{\HH^s(\R^N)}^2+\int_{\R^N}F_\eps(u)dx,
\end{multline}
where $F_\eps$ is the primitive of $f_\eps$. Since we do not restrict the values of $q$ to a subcritical range, the natural domain of definition for $E_\eps$
is $H^s(\R^N)\cap L^q(\R^N)$, where $H^s(\R^N)$ is the {\em nonhomogneous Sobolev space} which can be defined for $s<N/2$ as the subspace of $\HH^s(\R^N)$ such that
\begin{equation}\label{22-2}
\|u\|_{H^s(\Rn)}^2:=\|u\|_{\dot{H}^s(\Rn)}+\|u\|_{L^{2}(\Rn)}^2<\infty.
\end{equation}
Note that $H^s(\R^N)\cap L^q(\R^N)\subset L^p(\R^N)$ by interpolation, since $2<p<q$.

\begin{definition}\lab{def-1}
We say that $u\in H^s(\Rn)\cap L^q(\R^N)$ is a {\em weak solution} of \eqref{P-eps} if
$$
\langle u,\varphi\rangle_{\HH^s(\R^N)}+\eps \int_{\Rn}u\va\ dx
=\int_{\Rn}u^{p-1}\va\ dx-\int_{\Rn}u^{q-1}\va\ dx\quad\forall\va\in H^s(\Rn)\cap L^q(\R^N).$$
\end{definition}

Recall that the fractional Laplace operator \eqref{d-fracL} admits an integral representation
\begin{align} \label{De-u}
\left(-\Delta\right)^s u(x)=-\frac{c_{N,s}}{2}\int_{\mathbb{R}^N}\frac{u(x+y)-2u(x)+u(x-y)}{|y|^{N+2s}}dy, \quad x\in\Rn,
\end{align}
where $c_{N,s}:=\f{2^{2s}s\Ga(N/2+s)}{\pi^{N/2}\Ga(1-s)}$.
Because of the strong singularity at the origin, this expression should be interpreted as a singular integral.
However, if the function $u$ is sufficiently regular, e.g. if $u\in L^1\big(\R^N,\frac{dx}{(1+|x|)^{(N+2s)}}\big)\cap C^{2s+\alpha}(\R^N)$ for some $\alpha>0$,
then for every $x\in\R^N$ the right hand side of \eqref{De-u} is well defined and finite in the sense of Lebesgue's integration \cite[Proposition 3.1]{BM}.
This suggests the following definition.

\begin{definition}
A function $u:\R^N\to\R$ is a {\em classical solution} of \eqref{P-eps} if 

$u\in L^1\big(\R^N,\frac{dx}{(1+|x|)^{(N+2s)}}\big)\cap C^{2s+\alpha}(\R^N)$ for some $\alpha>0$ and
\be\lab{u-f}
(-\De)^s u(x)=f_\eps(u(x))\qquad\forall\, x\in\Rn.
\ee
\end{definition}

Next we are going to show that every weak solution of \eqref{P-eps} is in fact a classical solution
and deduce some other properties of weak solutions of \eqref{P-eps}.

\subsection{Qualitative properties of weak solutions of \eqref{P-eps}.}\label{s-qual}
We first describe some apriori qualitative properties of weak solutions of \eqref{P-eps}.
First we establish an apriori bound which is a consequence of the weak maximum principle for $(-\Delta)^s$ with $s\le 1$.

\begin{lemma}[Apriori bound]\label{l:max}
Let $u\in H^s(\R^N)\cap L^q(\R^N)$ be a weak solution of \eqref{P-eps}.
Then $u\le 1$ a.e. in $\R^N$.
\end{lemma}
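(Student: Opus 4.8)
The plan is to establish the apriori bound $u \le 1$ by combining the weak formulation of \eqref{P-eps} with the Dirichlet space property of $\HH^s(\R^N)$, testing against the positive part of $u - 1$.

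First I would observe that since $f_\eps(t) = -\eps t + t^{p-1} - t^{q-1}$ and, for $t > 1$, we have $t^{p-1} - t^{q-1} = t^{p-1}(1 - t^{q-p}) < 0$ (using $q > p$), it follows that $f_\eps(t) < 0$ for all $t \ge 1$; in particular $f_\eps(t) \le 0$ whenever $t \ge 1$. This is the sign condition that should force $u \le 1$ by a comparison argument. To make this rigorous in the weak setting, I would set $w := (u - 1)^+ = \max\{u-1, 0\}$. Because $\HH^s(\R^N)$ is a Dirichlet space (as recalled in the excerpt, $u \in \HH^s$ implies $u^+ \wedge 1 \in \HH^s$ with controlled norm), and since $H^s(\R^N) \cap L^q(\R^N)$ is stable under such truncations, $w$ is an admissible test function in Definition \ref{def-1}: indeed $w \in H^s(\R^N)$ and, since $0 \le w \le u^+ \le |u|$ pointwise with $u \in L^q$, also $w \in L^q(\R^N)$.

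The key step is then to plug $\varphi = w$ into the weak formulation and estimate each term. On the left-hand side, $\langle u, w \rangle_{\HH^s} \ge \|w\|_{\HH^s}^2 \ge 0$: this is the standard fact that pairing a function with the positive part of its translate by a constant is nonnegative, which follows from the Gagliardo representation \eqref{e-gagliardo} by the pointwise inequality $\big(u(x)-u(y)\big)\big(w(x)-w(y)\big) \ge |w(x)-w(y)|^2$ (one checks this casewise depending on whether $u(x), u(y)$ exceed $1$). Also $\eps \int u w\, dx \ge 0$ since $u w \ge 0$ (on the set where $w > 0$ we have $u > 1 > 0$). Hence the left-hand side is $\ge \|w\|_{\HH^s}^2$. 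On the right-hand side, $\int (u^{p-1} - u^{q-1}) w\, dx$: on $\{w > 0\} = \{u > 1\}$ the integrand $u^{p-1} - u^{q-1}$ is negative and $w \ge 0$, so this integral is $\le 0$. Combining, $\|w\|_{\HH^s}^2 \le 0$, forcing $\|w\|_{\HH^s} = 0$, hence $w$ is a.e. constant, and since $w \ge 0$ and $w \in L^q$ (so it cannot be a nonzero constant), $w \equiv 0$, i.e. $u \le 1$ a.e.

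The main obstacle I anticipate is the justification that $w = (u-1)^+$ is genuinely an admissible test function and that $\langle u, w\rangle_{\HH^s} \ge \|w\|_{\HH^s}^2$ — i.e. carefully invoking the Dirichlet/contraction properties of $\HH^s(\R^N)$ cited from \cite{MaR} rather than treating them as obvious, and verifying the membership $w \in L^q(\R^N)$ needed for the pairing in Definition \ref{def-1}. A minor subtlety worth a line is noting $q > p > 2$ ensures $u^{p-1} - u^{q-1} \le 0$ on $\{u \ge 1\}$; everything else is a routine sign-chasing computation using the Gagliardo kernel, which I would not spell out in full detail.
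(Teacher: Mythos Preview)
Your proposal is correct and follows essentially the same approach as the paper: test \eqref{P-eps} against $(u-1)^+$, use the sign of $f_\eps$ on $\{u\ge 1\}$, and deduce $\|(u-1)^+\|_{\dot{H}^s}^2\le 0$. The only cosmetic difference is that the paper obtains the key inequality $\langle u,(u-1)^+\rangle_{\dot{H}^s}\ge\|(u-1)^+\|_{\dot{H}^s}^2$ by invoking the abstract contraction property of Dirichlet forms (writing $u=(u\wedge 1)+(u-1)^+$ and using $\langle u\wedge 1,(u-1)^+\rangle_{\dot{H}^s}\ge 0$ from \cite{MaR}), whereas you verify it directly via the Gagliardo kernel and the pointwise inequality $(u(x)-u(y))(w(x)-w(y))\ge|w(x)-w(y)|^2$; both are standard and equivalent here.
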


\proof
Recall that the space $\dot{H}^s(\R^N)$ is invariant with respect to standard truncations and hence $u\in \dot{H}^s(\R^N)$ can be represented as $u=(u\wedge 1)+(u-1)^+$, where $u\wedge 1,(u-1)^+\in \dot{H}^s(\R^N)$, see \cite[p.34]{MaR}.\footnote{Here $u\wedge w:=\min\{u,w\}$.}
In view of the contraction properties of Dirichlet forms, for all $u\in \dot{H}^s(\R^N)$ the following inequality holds
$$\langle (u\wedge 1),(u-1)^+\rangle_{\dot{H}^s}\ge 0,$$
see \cite[p.32 and 34]{MaR}.
Therefore,
$$\langle u,(u-1)^+\rangle_{\dot{H}^s}=\langle (u\wedge 1),(u-1)^+\rangle_{\dot{H}^s}+\langle(u-1)^+,(u-1)^+\rangle_{\dot{H}^s}\ge\|(u-1)^+\|_{\dot{H}^s}^2.$$
On the other hand, testing equation \eqref{P-eps} against $(u-1)^+$, we obtain
$$\langle u,(u-1)^+\rangle_{\dot{H}^s}\le\int_{\R^N}f_\eps(u)(u-1)^+\le 0,$$
since $f_\eps(u(x))\le 0$ for all $x\in \mathrm{Supp}((u-1)^+)$. We conclude that $\|(u-1)^+\|_{\dot{H}^s}^2\le 0$.
\qed

\begin{lemma}[Regularity and positivity]\label{l:2}
Let $u\in H^s(\R^N)\cap L^q(\R^N)$ be a weak solution of \eqref{P-eps}. Then 

(i) $u$ is a classical solution of \eqref{P-eps}. Moreover, $0<u(x)<1$ for all $x\in\R^N$ and $u(x) \to 0$ as $|x| \to \infty$.

(ii) $u\in C^{\infty}(\Rn)$  if both $p$ and $q$ are integer and $u\in C^{2ks+2s}(\Rn)$, where $k$ is the largest integer satisfying   $\lfloor 2ks\rfloor<p$  if $p\not\in\N$ and $\lfloor 2ks\rfloor<q$ if $p\in\N$ but $q\not\in\N$, where $\lfloor 2ks\rfloor$ denotes the greatest integer less than equal to $2ks$ .
\end{lemma}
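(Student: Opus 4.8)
The plan is to run a standard bootstrap on the equation $(-\Delta)^s u = f_\eps(u)$, using $L^p$-to-$L^\infty$ regularity for the fractional Laplacian together with Schauder-type estimates, and then apply the strong maximum principle and a Harnack/decay argument. First, since $u \in H^s(\Rn) \cap L^q(\Rn)$ and, by Lemma~\ref{l:max}, $0 \le u \le 1$ a.e., the right-hand side $f_\eps(u) = -\eps u + u^{p-1} - u^{q-1}$ is bounded and lies in $L^2(\Rn) \cap L^\infty(\Rn)$; more precisely $|f_\eps(u)| \le C u$ pointwise, so $f_\eps(u) \in L^2(\Rn)$. A standard iteration (Brezis–Kato type, or the Moser iteration adapted to $(-\Delta)^s$ as in the references on fractional equations) then upgrades $u \in L^2$ to $u \in L^r$ for every $r \in [2,\infty]$, hence $u \in L^\infty(\Rn)$. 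Here one uses that $u \le 1$ already gives boundedness directly, so the only real content of this step is to record that $u \in L^r$ for all finite $r$, which is needed to feed the next stage.

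Next I would bootstrap to Hölder and then classical regularity. With $f_\eps(u) \in L^\infty(\Rn)$, the regularity theory for $(-\Delta)^s$ (e.g. the estimates in \cite{BM} and the references therein) gives $u \in C^{\beta}(\Rn)$ for some $\beta \in (0, 2s)$; since $f_\eps$ is locally Lipschitz (indeed smooth away from $0$, and $p,q > 2$ make $u^{p-1}, u^{q-1}$ at least $C^1$ near $0$ when $p,q \ge 3$, and $C^{\min(p,q)-1 - 0}$ Hölder otherwise), $f_\eps(u) \in C^{\gamma}(\Rn)$ for a suitable $\gamma$, and the Schauder estimate for the fractional Laplacian promotes $u$ to $C^{2s+\alpha}(\Rn)$ for some $\alpha > 0$. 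This shows $u$ is a classical solution in the sense of the definition above (one also checks $u \in L^1(\Rn, \tfrac{dx}{(1+|x|)^{N+2s}})$, which follows since $u \in L^2 \cap L^\infty$). Part (ii) is obtained by iterating this Schauder step: each application gains $2s$ derivatives, and the iteration continues as long as the composed function $f_\eps(u)$ retains enough regularity, which is controlled by how the Hölder exponent compares with $p$ (or with $q$, once $p \in \N$) — this yields the stated $C^{2ks+2s}$ regularity and the $C^\infty$ conclusion when $p, q \in \N$, because then $f_\eps$ is a polynomial and no loss occurs.

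For positivity, I would argue by the strong maximum principle: $u \gneq 0$ and $u$ satisfies $(-\Delta)^s u + c(x) u = 0$ with $c(x) := \eps - u^{p-2} + u^{q-2} \in L^\infty$ (using $0 \le u \le 1$); since $u$ is continuous, nonnegative, not identically zero, and solves a linear equation with bounded coefficient, the strong maximum principle for $(-\Delta)^s$ forces $u(x) > 0$ for all $x \in \Rn$. The strict bound $u(x) < 1$ likewise follows: if $u(x_0) = 1$ at an interior maximum then $(-\Delta)^s u(x_0) > 0$ unless $u \equiv 1$, but $u \equiv 1 \notin H^s(\Rn)$ and anyway $f_\eps(1) = -\eps < 0$, a contradiction with \eqref{u-f}. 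Finally, $u(x) \to 0$ as $|x| \to \infty$ follows from $u \in L^r(\Rn)$ for some finite $r$ together with the uniform Hölder (hence uniform continuity) bound $u \in C^\beta(\Rn)$: a uniformly continuous $L^r$ function on $\Rn$ must vanish at infinity.

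The main obstacle I anticipate is making the bootstrap in part (ii) precise — in particular, tracking exactly how much Hölder regularity of $u$ is needed for $f_\eps(u) = u^{p-1} - u^{q-1} - \eps u$ to have a given Hölder exponent (the composition $t \mapsto t^{p-1}$ is only $C^{p-1}$ at the origin when $p-1 \notin \N$, so one must use that $u \ge 0$ is bounded away from where the singularity of the power matters, or invoke the composition rule for Hölder spaces carefully), and then verifying that the integer $k$ in the statement is indeed the sharp stopping index for the Schauder iteration. The rest is routine application of the fractional regularity and maximum-principle machinery already cited.
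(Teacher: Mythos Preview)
Your proposal is correct and follows essentially the same route as the paper: both arguments use Lemma~\ref{l:max} to get $f_\eps(u)\in L^\infty$, apply Schauder-type estimates for $(-\Delta)^s$ (the paper cites \cite{RS1} and \cite{BM}) to reach $C^{2s+\alpha}$ regularity and hence the classical-solution property, then iterate for part (ii); positivity and the strict bound $u<1$ are obtained via the integral representation/strong maximum principle exactly as you describe, and the decay $u(x)\to 0$ follows from $u\in C^\alpha\cap L^2$ just as you argue. The only cosmetic difference is that the paper invokes the integral representation \eqref{De-u} directly at a hypothetical zero of $u$ rather than phrasing it as a linear equation with bounded potential, but this is the same mechanism.
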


\begin{proof}
(i)	From Lemma \ref{l:max}, we conclude that $f_\eps(u):=u^{p-1}-u^{q-1}-\eps u\in L^{\infty}(\Rn)$. Therefore applying Schauder estimate \cite[Theorem 1.1 (a)]{RS1}, $u\in C^{2s}(B_{1/2}(0))$ when $s\not=\f{1}{2}$ and in $C^{2s-\de}((B_{1/2}(0)))$, when $s=\f{1}{2}$. Moreover, since the equation is invariant under translation, translating the equation as in the proof of \cite[Theorem 1.4]{BM}, we obtain $u\in C^{2s}(\Rn)$ when $s\not=\f{1}{2}$ and in $C^{2s-\de}(\Rn)$, when $s=\f{1}{2}$. Hence, $f_\eps(u)\in C^{2s}(\Rn)$ when  $s\not=\f{1}{2}$ and in $C^{2s-\de}(\Rn)$, when $s=\f{1}{2}$. Thus, invoking \cite[Theorem 1.1 (b)]{RS1} we have $u\in C_{loc}^{2s+\alpha}(\Rn)$ for some $\al\in(0,1)$. Next, it can be shown exactly as in 	\cite[Proposition 1]{BM} that $u$ is a classical solution of $(P_{\eps})$.
	
	In particular, using integral representation \eqref{De-u} for $(-\Delta)^s$, we conclude that
	either $u(x)>0$ for all $x\in\R^N$, or otherwise $u\equiv 0$.
	Similarly, if there exists $x\in\Rn$ such that $u(x)=1$, then $x$ is the maximum point of $u$. Since $u$ is a classical solution, using the
	integral representation \eqref{De-u}, we have $(-\De)^su(x)>0$. Therefore, LHS of \eqref{P-eps} is strictly positive at $x$, whereas RHS of \eqref{P-eps} equal to $0$ at $x$, which is a contradiction. Hence $0<u<1$.

	Finally, it is standard to see that $u\in C^{\alpha}(\Rn)\cap
	L^2(\Rn)$ for some $\alpha\in(0,1]$ implies that
	$u(x) \to 0$ as $|x| \to \infty$.
	
	(ii) Since the equation is invariant under translation, repeating the argument as in the first part of (i) (see also \cite[Theorem 1.4]{BM}), we can improve the regularity $C^{\infty}(\Rn)$  if both $p$ and $q$ are integer and $C^{2ks+2s}(\Rn)$, where $k$ is the largest integer satisfying   $\lfloor 2ks\rfloor<p$  if $p\not\in\N$ and $\lfloor 2ks\rfloor<q$ if $p\in\N$ but $q\not\in\N$, where $\lfloor 2ks\rfloor$ denotes the greatest integer less than equal to $2ks$.
\end{proof}

Before proving the next lemma we recall an important result from Frank-Lenzmann-Silvestre \cite[Lemma C.2]{FLS} which is a key tool in proving the decay estimate of the  solution of $(P_{\eps})$.

\begin{theorem}\label{t:FLS}\cite[Lemma C.2]{FLS}
	Let $N>1$ and $0<s<1$, and suppose that $V\in L^{\infty}(\Rn)$ with
	$V(x)\to 0$ as $|x|\to\infty$. Assume that $u\in L^2(\Rn)$ with $\|u\|_{2}=1$
	and  satisfies \\
	$(-\De)^s u+Vu=E u$ with some $E<0$. Furthermore, let $0<\la<-E$ be given and suppose that $R>0$ is such that $V(x)+\la\geq 0$ for $|x|\geq R$. Then the following properties hold:\smallskip
	
	(i) $|u(x)|\lesssim(1+|x|)^{-(N+2s)},$
	
	(ii) $u(x)= -c\la^{-2}\bigg(\displaystyle\int_{\Rn}Vu\, dx\bigg)|x|^{-(N+2s)}+o(|x|^{-(N+2s)})$  as $|x|\to\infty$,\\
where $c>0$ is a positive constant depending on $N$ and $s$.
\end{theorem}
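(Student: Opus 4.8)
\emph{Strategy.} Following the approach of Frank--Lenzmann--Silvestre \cite{FLS}, the plan is to separate the qualitative decay bound (i), which I would get by comparison with a barrier, from the sharp expansion (ii), which I would get from a convolution representation of $u$; the common ingredient is the precise behaviour at the origin and at infinity of the fundamental solution $\mathcal{G}_\mu$ of $(-\De)^s+\mu$, $\mu>0$. First I would regularise: since $V\in L^\infty(\Rn)$ and $u\in L^2(\Rn)$ solves $(-\De)^s u=(E-V)u\in L^2(\Rn)$, the bootstrap with the Schauder-type estimates for $(-\De)^s$ used in the proof of Lemma~\ref{l:2} (see \cite{RS1}) yields that $u$ is bounded, $u\in C^{2s+\al}_{loc}(\Rn)$ for some $\al>0$, $u$ is a classical solution (so the equation holds pointwise through the integral representation \eqref{De-u}), and $u(x)\to0$ as $|x|\to\infty$. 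Next I would record the properties of $\mathcal{G}_\mu$: it is positive, radial, strictly decreasing in $|x|$, integrable, smooth away from $0$, it solves $(-\De)^s\mathcal{G}_\mu+\mu\mathcal{G}_\mu=0$ pointwise on $\Rn\setminus\{0\}$, and — by expanding $\widehat{\mathcal{G}_\mu}(\xi)=(|\xi|^{2s}+\mu)^{-1}$ near $\xi=0$ and near $\infty$, equivalently by subordination to the heat kernel — one has $\mathcal{G}_\mu(x)\sim a_{N,s}|x|^{-(N-2s)}$ as $|x|\to0$ (note $N>2s$ automatically, since $2s<2\le N$) and $\mathcal{G}_\mu(x)=b_{N,s}\,\mu^{-2}|x|^{-(N+2s)}\bigl(1+o(1)\bigr)$ as $|x|\to\infty$, with $a_{N,s},b_{N,s}>0$ depending only on $N,s$. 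This is the source both of the exponent $-(N+2s)$ and of the constant in (ii).

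\emph{Proof of (i).} I would argue by comparison. Rewrite the equation as $(-\De)^s u+\la u=(E+\la-V)u$; since $\la<-E$ we have $E+\la<0$, so using $V(x)\to0$ there is $R'\ge R$ with $E+\la-V(x)<0$ for all $|x|\ge R'$. Choose $C>0$ so large that $C\mathcal{G}_\la\ge\|u\|_\infty$ on $\{|x|\le R'\}$ (possible since $\mathcal{G}_\la$ is radially decreasing and positive) and set $w:=C\mathcal{G}_\la-u$, so that $w\ge0$ on $\{|x|\le R'\}$ and $w(x)\to0$ as $|x|\to\infty$. If $\inf_{\Rn}w<0$ it is attained at some $x_0$ with $|x_0|>R'$; the minimum principle read off from \eqref{De-u} gives $(-\De)^sw(x_0)\le0$, hence $(-\De)^sw(x_0)+\la w(x_0)<0$, whereas, using $(-\De)^s\mathcal{G}_\la+\la\mathcal{G}_\la=0$ at $x_0\ne0$ together with $u(x_0)=C\mathcal{G}_\la(x_0)-w(x_0)>0$, one computes $(-\De)^sw(x_0)+\la w(x_0)=-\bigl(E+\la-V(x_0)\bigr)u(x_0)>0$, a contradiction. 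Thus $u\le C\mathcal{G}_\la$ on $\Rn$; applying the same to $-u$ gives $|u|\le C'\mathcal{G}_\la$, and combining with the boundedness of $u$ and the large-$|x|$ asymptotics of $\mathcal{G}_\la$ gives $|u(x)|\lesssim(1+|x|)^{-(N+2s)}$, which is (i).

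\emph{Proof of (ii).} Now I would use the representation formula. By (i), $u\in L^1(\Rn)$ and hence $Vu\in L^1(\Rn)$; writing the equation as $\bigl((-\De)^s-E\bigr)u=-Vu$ with $-E>0$ and inverting with $\mathcal{G}_{-E}$ yields $u(x)=-\int_{\Rn}\mathcal{G}_{-E}(x-y)V(y)u(y)\,dy$. Decomposing $\mathcal{G}_{-E}(x-y)=\mathcal{G}_{-E}(x)+\bigl[\mathcal{G}_{-E}(x-y)-\mathcal{G}_{-E}(x)\bigr]$, the first term contributes $-\mathcal{G}_{-E}(x)\int_{\Rn}Vu$, and I would show the remainder is $o(|x|^{-(N+2s)})$ by splitting the $y$-integral into $\{|y|\le\de|x|\}$, $\{|y|>\de|x|,\ |x-y|\ge|x|/4\}$ and $\{|x-y|<|x|/4\}$ (so $|y|>\tfrac34|x|$ there): on the first region the large-$|x|$ kernel asymptotics give $\mathcal{G}_{-E}(x-y)/\mathcal{G}_{-E}(x)\to1$ uniformly, which with $Vu\in L^1$ bounds the contribution by $\omega(\de)\mathcal{G}_{-E}(x)$, $\omega(\de)\to0$; on the second both kernels are $\lesssim|x|^{-(N+2s)}$ while $\int_{|y|>\de|x|}|Vu|\to0$; on the third the factor $|V(y)|\le\sup_{|z|>\frac34|x|}|V|\to0$ supplies the extra decay that absorbs the local singularity $\mathcal{G}_{-E}\in L^1$. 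Letting $|x|\to\infty$ and then $\de\to0$ gives $u(x)=-\mathcal{G}_{-E}(x)\int_{\Rn}Vu+o(|x|^{-(N+2s)})=-\,c\bigl(\int_{\Rn}Vu\,dx\bigr)|x|^{-(N+2s)}+o(|x|^{-(N+2s)})$, with $c>0$ depending only on $N,s$ and read off from the $|x|\to\infty$ asymptotics of $\mathcal{G}_{-E}$; this is (ii).

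\emph{Main obstacle.} The delicate step is (ii). The source $Vu$ in the representation decays at exactly the borderline rate $|x|^{-(N+2s)}$ that is expected of $u$ itself, so no crude majorisation of $\mathcal{G}_{-E}*(Vu)$ can separate the leading term from the error; closing the estimate genuinely requires the decay $V(x)\to0$ to tame the near-diagonal part of the convolution at large $|x|$, and the \emph{sharp} (not merely order-of-magnitude) asymptotics of $\mathcal{G}_{-E}$ to pin down the constant — establishing those kernel asymptotics being the real analytic input. A comparatively minor point is the preliminary regularity-and-decay step, which is what makes the pointwise equation and the minimum principle used in (i) legitimate.
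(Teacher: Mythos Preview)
The paper does not prove this theorem: it is simply quoted from \cite[Lemma~C.2]{FLS} and used as a black box (in the proof of Lemma~\ref{t:decay} and later in Section~8). So there is no proof in the paper to compare your proposal against.

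That said, your sketch is essentially the Frank--Lenzmann--Silvestre argument itself --- comparison with the resolvent kernel $\mathcal{G}_\mu$ for part~(i), and the convolution representation $u=-\mathcal{G}_{-E}\ast(Vu)$ together with the sharp far-field asymptotics of $\mathcal{G}_{-E}$ for part~(ii) --- and the outline is sound. One point worth flagging: your expansion naturally produces the leading coefficient $b_{N,s}(-E)^{-2}$ coming from $\mathcal{G}_{-E}(x)\sim b_{N,s}(-E)^{-2}|x|^{-(N+2s)}$, whereas the statement as transcribed in this paper has $c\,\la^{-2}$ with the auxiliary parameter $\la\in(0,-E)$. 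Since the asymptotics of $u$ cannot depend on the arbitrary choice of $\la$, this appears to be a transcription slip in the paper's version of the FLS lemma rather than a defect in your argument; your constant is the correct one.
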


\begin{lemma}[Asymptotic]\lab{t:decay}
	Let $u\in H^s(\R^N)\cap L^q(\R^N)$ be a weak solution of \eqref{P-eps}.
	Then
	$$u(x)= C|x|^{-(N+2s)}+o(|x|^{-(N+2s)})\quad\text{as }\,|x|\to\infty,$$

	where $C>0$ depends on $N, s, p, q, \eps$ and $u$.
\end{lemma}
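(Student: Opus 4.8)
The plan is to apply Theorem \ref{t:FLS} (Frank--Lenzmann--Silvestre) to the weak solution $u$ of \eqref{P-eps}, after rewriting the equation in the Schr\"odinger form $(-\De)^s u + Vu = Eu$. The natural choice is
$$
E := -\eps, \qquad V(x) := -\,u(x)^{p-2} + u(x)^{q-2},
$$
so that $f_\eps(u) = -\eps u + u^{p-1} - u^{q-1}$ is exactly $Eu - Vu$. By Lemma \ref{l:2}(i), $u$ is a classical solution with $0 < u(x) < 1$ for all $x$ and $u(x)\to 0$ as $|x|\to\infty$; hence $V\in L^\infty(\R^N)$ and $V(x)\to 0$ as $|x|\to\infty$. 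Also $u\in H^s(\R^N)\subset L^2(\R^N)$, and $u\not\equiv 0$, so after normalising $\tilde u := u/\|u\|_2$ we are in the setting of Theorem \ref{t:FLS} with $E = -\eps < 0$.

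The next step is to check the remaining hypotheses of Theorem \ref{t:FLS}. We must produce $0 < \la < -E = \eps$ and $R>0$ with $V(x) + \la \ge 0$ for $|x|\ge R$. Since $u(x)\to 0$, given $\la := \eps/2$ we can pick $R$ so large that $u(x)^{p-2} \le \eps/2$ for $|x|\ge R$ (using $p>2$, so the exponent $p-2>0$); then $V(x) + \la = -u^{p-2} + u^{q-2} + \eps/2 \ge -u^{p-2} + \eps/2 \ge 0$ there. Conclusion (i) of Theorem \ref{t:FLS} then gives $u(x) \lesssim (1+|x|)^{-(N+2s)}$, and conclusion (ii) gives
$$
u(x) = -c\,\la^{-2}\Big(\int_{\R^N} V u\, dx\Big)|x|^{-(N+2s)} + o\big(|x|^{-(N+2s)}\big)\quad\text{as }|x|\to\infty,
$$
after multiplying back by $\|u\|_2$; setting $C := -c\,\la^{-2}\|u\|_2 \int V u\,dx$ (absorbing the normalisation constant) gives the claimed expansion, with $C$ depending on $N,s,p,q,\eps$ and $u$ through the integral $\int_{\R^N}(-u^{p-1}+u^{q-1})u\,dx$ and $\|u\|_2$.

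The one genuine point requiring care is that $C$ is strictly positive (the statement asserts $C>0$), equivalently that $\int_{\R^N} Vu\,dx = \int_{\R^N}(-u^{p-1}+u^{q-1})\,dx < 0$. This does not follow from $V\le 0$ pointwise near infinity alone. The cleanest route is to test the weak formulation (Definition \ref{def-1}) against $u$ itself — legitimate since $u\in H^s(\R^N)\cap L^q(\R^N)$ — to get $\|u\|_{\dot H^s}^2 + \eps\|u\|_2^2 = \int_{\R^N}(u^{p} - u^{q})\,dx$, hence $\int_{\R^N}(u^{p-1}-u^{q-1})\,u\,dx = \|u\|_{\dot H^s}^2 + \eps\|u\|_2^2 > 0$, so $\int Vu\,dx = -(\|u\|_{\dot H^s}^2+\eps\|u\|_2^2) < 0$ and therefore $C>0$. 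I expect this positivity check, together with verifying integrability of $Vu$ so that $\int Vu\,dx$ is finite (which follows from $u\in L^2$ and $V\in L^\infty$, or alternatively from the a priori decay in (i)), to be the only non-routine steps; everything else is a direct citation of Theorem \ref{t:FLS} and Lemma \ref{l:2}.
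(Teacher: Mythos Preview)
Your overall strategy---rewrite \eqref{P-eps} in Schr\"odinger form with $V=-u^{p-2}+u^{q-2}$, $E=-\eps$, normalise, and cite Theorem \ref{t:FLS}---is exactly the paper's approach, and your verification of the hypotheses ($V\in L^\infty$, $V\to 0$, choice of $\la$ and $R$) is fine.

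The gap is in your positivity argument for $C$. The integral appearing in Theorem \ref{t:FLS}(ii) is $\int_{\R^N} V\tilde u\,dx$, which after undoing the normalisation is (up to a positive factor)
$$\int_{\R^N} Vu\,dx=\int_{\R^N}\big(-u^{p-1}+u^{q-1}\big)\,dx.$$
What your Nehari computation actually yields is
$$\int_{\R^N}\big(u^{p}-u^{q}\big)\,dx=\|u\|_{\dot H^s}^2+\eps\|u\|_2^2>0,$$
i.e.\ you have shown $-\int_{\R^N} Vu\cdot u\,dx>0$, not $-\int_{\R^N} Vu\,dx>0$. These are different integrals (there is an extra factor of $u$), and your claimed identity $\int Vu\,dx=-(\|u\|_{\dot H^s}^2+\eps\|u\|_2^2)$ is false. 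The Nehari identity by itself gives no direct control on $\int(u^{p-1}-u^{q-1})\,dx$.

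The fix is simpler than Nehari and is what the paper does: by Lemma \ref{l:2} one has $0<u(x)<1$ for every $x$, and since $p<q$ this gives $u(x)^{p-1}>u(x)^{q-1}$ pointwise, hence $u^{p-1}-u^{q-1}\gneq 0$ and $\int_{\R^N}(u^{p-1}-u^{q-1})\,dx>0$, i.e.\ $\int Vu\,dx<0$ and $C>0$. Finiteness of these integrals follows from the decay $u(x)\lesssim(1+|x|)^{-(N+2s)}$ already obtained via part (i) of Theorem \ref{t:FLS}, since $(N+2s)(p-1)>N$ whenever $p>2>\f{2(N+s)}{N+2s}$.
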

\begin{proof}
	We first obtain an upper estimate for $u$. To do this, define $\tilde{u}=\f{u}{\|u\|_{2}}$. Then $\tilde{u}$ satisfy

	\be\lab{5-29-1}
	(-\De)^s \tilde{u}+V\tilde{u}=-\eps\tilde{u},
	\ee
	where $$V(x)=\|u\|_2^{q-2}\tilde{u}^{q-2}(x)-\|u\|_2^{p-2}\tilde{u}^{p-2}(x)=u^{q-2}(x)-u^{p-2}(x).$$  Therefore, by Lemma \ref{l:max}, we have $V\in L^{\infty}(\Rn)$. Further, by Lemma \ref{l:2}, it follows that $V(x)\to 0$ as $|x|\to\infty$ and for given $0<\la<\eps$, there exists $R>0$ such that $V(x)+\la\geq 0$ for $|x|\geq R$. Hence, applying Theorem \ref{t:FLS}(i), we conclude that
	\begin{equation}\label{e-up}
	\tilde u(x)\leq c_1(1+|x|)^{-(N+2s)}.
	\end{equation}
	
	To obtain the asymptotic estimate from Theorem \ref{t:FLS}(ii) it is enough if we show $$-c\la^{-2}\bigg(\displaystyle\int_{\Rn}V\tilde{u}\, dx\bigg)>c_2,$$ for some constant $c_2>0$.
	To see this, we note that since $u\le 1$ by Lemma \ref{l:max}, and since $p<q$, we have $u^{p-1}-u^{q-1}\gneq 0$. Moreover, upper estimate \eqref{e-up} implies that  $u\in L^{p-1}(\Rn)\cap L^{q-1}(\Rn)$ as $q>p>2>\f{2(N+s)}{N+2s}$. Hence,
	$$-C\la^{-2}\bigg(\displaystyle\int_{\Rn}V\tilde{u}\, dx\bigg)=\f{C}{\la^2\|u\|_2}\int_{\Rn}(u^{p-1}-u^{q-1}) dx>0.$$
	Thus the assertion follows.
\end{proof}

\begin{lemma}[Poho\v zaev identity]\label{l:Phz}
	Let $u\in H^s(\R^N)\cap L^q(\R^N)$ be a weak solution of \eqref{P-eps}. Then $u$ satisfies Poho\v zaev identity
	$$\|u\|_{\dot{H}^s(\R^N)}^2=\frac{2N}{N-2s}\int_{\R^N}F_\eps(u).$$
\end{lemma}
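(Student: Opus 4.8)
The statement is the fractional Rellich--Poho\v zaev identity for the entire equation $(-\De)^s u=f_\eps(u)$, and the natural strategy is to pair the equation with the dilation generator $x\cdot\na u$. First I would collect the information already available: by Lemma \ref{l:2} the function $u$ is a classical solution with $u\in C^{2s+\al}_{loc}(\Rn)$, $0<u<1$ and $u\to 0$ at infinity, and by Lemma \ref{t:decay} one has the sharp bound $u(x)=O(|x|^{-(N+2s)})$ as $|x|\to\infty$. Since $f_\eps(u)=-\eps u+u^{p-1}-u^{q-1}$ with $p,q>2$, this forces $f_\eps(u)(x)=O(|x|^{-(N+2s)})$ as well, and $f_\eps(u)\in C^{\al}_{loc}$ with a comparable local H\"older norm. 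Feeding this back into the interior Schauder estimates of \cite{RS1} (in the version with a weighted $L^1$ tail term), applied on unit balls centred far from the origin and iterated as in Lemma \ref{l:2}(ii), yields a matching gradient decay $|\na u(x)|=O(|x|^{-\theta})$ with $\theta$ as close to $N+2s+1$ as we wish; since $N>2s$, any such $\theta$ exceeds $\tfrac N2+1$. Consequently $x\cdot\na u\in L^2(\Rn)\cap L^q(\Rn)$, the product $f_\eps(u)\,(x\cdot\na u)=x\cdot\na\bigl(F_\eps(u)\bigr)$ lies in $L^1(\Rn)$, and, using $v:=(-\De)^{s/2}u=(-\De)^{-s/2}f_\eps(u)$ together with standard Riesz-potential estimates, $v$ and $\na v$ decay fast enough (because $N>2s$) that $v^2$, $v\,(x\cdot\na v)$ are integrable and $x\cdot\na u\in\dot H^s(\Rn)$.

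With these bounds, the proof reduces to two integral identities. On the left side I would use the commutation of $(-\De)^{s/2}$ with the dilation field: differentiating the scaling identity $(-\De)^{s/2}\bigl(u(\la\,\cdot)\bigr)(x)=\la^{s}\bigl((-\De)^{s/2}u\bigr)(\la x)$ at $\la=1$ gives $(-\De)^{s/2}(x\cdot\na u)=s\,v+x\cdot\na v$, whence, after an integration by parts whose boundary terms vanish by the decay of $v$,
\[
\int_{\Rn}(-\De)^s u\,(x\cdot\na u)\,dx=\int_{\Rn}v\,\bigl(s\,v+x\cdot\na v\bigr)\,dx=s\int_{\Rn}v^2-\frac N2\int_{\Rn}v^2=-\frac{N-2s}{2}\,\|u\|_{\dot H^s(\Rn)}^2 .
\]
On the right side, since $f_\eps(u)\,(x\cdot\na u)=x\cdot\na\bigl(F_\eps(u)\bigr)\in L^1(\Rn)$ and $F_\eps(u)=O(|x|^{-2(N+2s)})$ decays, one more integration by parts gives $\int_{\Rn}f_\eps(u)\,(x\cdot\na u)\,dx=-N\int_{\Rn}F_\eps(u)\,dx$. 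Equating the two evaluations of $\int_{\Rn}(-\De)^s u\,(x\cdot\na u)\,dx$ yields $-\tfrac{N-2s}{2}\|u\|_{\dot H^s(\Rn)}^2=-N\int_{\Rn}F_\eps(u)$, which is the asserted identity.

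The one genuinely delicate point is the legitimacy of pairing the equation with $x\cdot\na u$, which is \emph{not} an admissible test function in $H^s(\Rn)\cap L^q(\Rn)$ because of its linear growth. I would make this rigorous by running the computation with the truncated field $\phi_R(x):=\eta(x/R)\,(x\cdot\na u)(x)$, where $\eta\in C_c^\infty(\Rn)$, $\eta\equiv1$ on $B_1$, $\supp\eta\subset B_2$, which is a legitimate element of $H^s(\Rn)\cap L^q(\Rn)$, so Definition \ref{def-1} applies; the commutator errors generated by the cutoff (schematically $\bigl[(-\De)^{s/2},\eta(\cdot/R)\bigr]$ acting on $v$ or on $x\cdot\na u$) and all boundary contributions are controlled by the decay estimates above and tend to $0$ as $R\to\infty$. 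An equivalent, more transparent route is to note that $t\mapsto u(\cdot/t)$ is, thanks to the same decay, an admissible variation of the weak solution $u$, compute $E_\eps\bigl(u(\cdot/t)\bigr)=\tfrac{t^{N-2s}}{2}\|u\|_{\dot H^s(\Rn)}^2-t^N\int_{\Rn}F_\eps(u)$, and differentiate at $t=1$: since $E_\eps'(u)=0$ the derivative vanishes and the identity follows. A third, fully self-contained option is to apply the classical Rellich--Poho\v zaev identity to the Caffarelli--Silvestre $s$-harmonic extension $U$ of $u$ on half-balls $B_R^+\subset\Rn\times(0,\infty)$ with the vector field $(x,y)$ and let $R\to\infty$, the curved boundary terms vanishing by the decay of $u$ and $\na U$. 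In every approach the decay input from Lemmas \ref{l:2} and \ref{t:decay} is exactly what is needed, and securing the matching decay of $\na u$ (and of $(-\De)^{s/2}u$), in the presence of the nonlocal tail in the Schauder estimates, is the main obstacle.
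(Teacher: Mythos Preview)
Your argument is sound, but the paper does not actually prove this lemma here: it simply cites \cite[Theorem A.1]{BM}, where the Poho\v zaev identity is established for weak solutions $u\in\dot H^s(\Rn)\cap L^\infty(\Rn)$, and observes that this applies because Lemma~\ref{l:max} gives $0<u\le 1$. What you have written is essentially a self-contained sketch of what that cited proof contains---pairing with the dilation field $x\cdot\na u$, the commutation identity $(-\De)^{s/2}(x\cdot\na u)=s\,v+x\cdot\na v$, and the truncation~/ scaling~/ extension alternatives to make the pairing rigorous---and the computation is correct. The one notable difference is that you invoke the sharp pointwise decay $u(x)=O(|x|^{-(N+2s)})$ from Lemma~\ref{t:decay} (and the derived gradient decay) to kill the boundary and commutator terms; this is legitimate in the paper's logical order, but it is more than the cited result requires, since \cite[Theorem A.1]{BM} needs only $u\in\dot H^s\cap L^\infty$. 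In that sense your route is heavier but more explicit, while the paper's is a one-line reduction to an already-proved black box.
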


\proof
See \cite[Theorem A.1]{BM}, where Poho\v zaev identity is proved for weak solutions $u\in \dot{H}^s(\R^N)\cap L^\infty(\R^N)$.
\qed

We conclude this section by stating  the following decay properties of radial decreasing functions on $\Rn.$

\begin{lemma}\label{l:1}
	(i)  Let $t \geq 1$ and $u \in L^t(\Rn)$ be a radial nonincreasing function. Then for every $x \neq 0$,
	$$u(x) \leq C(N,t)|x|^{\f{-N}{t}}\|u\|_t,$$
	where $C(N,t)=|B_1(0)|^{-1/t}$
	
	(ii) Let $u_n \in H^s(\Rn)$ be a sequence of radial nonincreasing functions such that $u_n \rightharpoonup u$ in $H^s(\Rn).$ Then
	upon extracting a subsequence,
	$u_n \to u$ in $L^{\infty}(\Rn \setminus B_r(0))$ and $L^p(\Rn \setminus B_r(0))$ for all $r>0,$ for all $p>2^*.$
\end{lemma}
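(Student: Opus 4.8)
The plan is to prove part (i) by a direct monotonicity-and-integration argument, and part (ii) by combining Part (i) with a Helly-type pointwise compactness argument for monotone functions, followed by interpolation to upgrade to $L^p$ convergence for $p>2^*$.

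For part (i): fix $x\neq 0$ and set $r=|x|$. Since $u$ is radial and nonincreasing, for every $y$ with $|y|\le r$ we have $u(y)\ge u(x)$ (identifying the radial profile with the function). Hence
$$
\|u\|_t^t = \int_{\Rn}|u(y)|^t\,dy \ge \int_{B_r(0)}|u(y)|^t\,dy \ge u(x)^t\,|B_r(0)| = u(x)^t\,|B_1(0)|\,r^N,
$$
which on rearranging gives $u(x)\le |B_1(0)|^{-1/t}\,|x|^{-N/t}\,\|u\|_t$. (Strictly, one should first note that $u$ being radial nonincreasing and in $L^t$ is, after modification on a null set, everywhere defined and nonincreasing in $|x|$ on $(0,\infty)$, so the pointwise inequality $u(y)\ge u(x)$ for $|y|\le|x|$ makes sense; this is routine.)

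For part (ii): by the uniform bound from weak convergence, $M:=\sup_n\|u_n\|_{H^s}<\infty$, and since $H^s(\Rn)\hookrightarrow L^2(\Rn)$ we have $\sup_n\|u_n\|_2\le M$. Applying part (i) with $t=2$ gives the locally uniform decay estimate $u_n(x)\le C\,|x|^{-N/2}\,M$ for all $n$ and all $x\neq 0$, i.e. the sequence is uniformly bounded away from the origin and equidecaying at infinity. Each radial profile $\tilde u_n:(0,\infty)\to[0,\infty)$ is monotone nonincreasing, so by Helly's selection theorem we may extract a subsequence converging pointwise on $(0,\infty)$ to some nonincreasing limit profile; this limit must agree a.e. with $u$ (the weak $H^s$ limit), since weak convergence in $H^s$ implies convergence in $L^2_{loc}$ along a further subsequence, hence a.e. Now on $\Rn\setminus B_r(0)$: the functions $u_n$ are monotone, uniformly bounded, and converge pointwise a.e. to $u$; by Dini-type reasoning for monotone functions (a pointwise-convergent sequence of monotone functions with continuous monotone limit converges uniformly on compact sets, and here the limit is continuous on $\{|x|\ge r\}$ because $u\in H^s$ restricted there is continuous after the embedding — alternatively one argues directly using monotonicity that oscillation is controlled by finitely many sample points) we get $u_n\to u$ uniformly on $\Rn\setminus B_r(0)$. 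Finally, for $p>2^*$: write
$$
\int_{\Rn\setminus B_r(0)}|u_n-u|^p\,dx \le \|u_n-u\|_{L^\infty(\Rn\setminus B_r(0))}^{\,p-2^*}\int_{\Rn\setminus B_r(0)}|u_n-u|^{2^*}\,dx,
$$
and the last integral is bounded uniformly in $n$ because $u_n\rightharpoonup u$ in $H^s\hookrightarrow L^{2^*}$ gives $\sup_n\|u_n-u\|_{2^*}<\infty$; since $p-2^*>0$ and the $L^\infty$ factor tends to $0$, the left side tends to $0$.

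The main obstacle is the justification of \emph{uniform} (not merely pointwise) convergence on $\Rn\setminus B_r(0)$ in part (ii): one must exploit monotonicity carefully. The clean way is: given $\eta>0$, choose radii $r=\rho_0<\rho_1<\dots<\rho_k$ with $\rho_k$ so large that $u(\rho_{k})<\eta$ (possible since $u\in L^2$ is nonincreasing, so $u(\rho)\to 0$) and the limit profile satisfies $u(\rho_{j-1})-u(\rho_j)<\eta$ for each $j$ (possible by continuity of $u$ on $[r,\rho_k]$, or by refining the partition); on each annulus $\rho_{j-1}\le|x|\le\rho_j$, monotonicity gives $u_n(\rho_j)\le u_n(x)\le u_n(\rho_{j-1})$ and similarly for $u$, so $|u_n(x)-u(x)|\le \max_{0\le j\le k}|u_n(\rho_j)-u(\rho_j)| + 2\eta$, and the max over finitely many points tends to $0$ by pointwise convergence; on $|x|\ge\rho_k$, $0\le u_n(x)\le u_n(\rho_k)\to u(\rho_k)<\eta$ and $0\le u(x)<\eta$. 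Hence $\limsup_n\|u_n-u\|_{L^\infty(\Rn\setminus B_r(0))}\le 3\eta$, and $\eta$ is arbitrary. Everything else is a routine application of Helly's theorem, the Sobolev embedding, and interpolation.
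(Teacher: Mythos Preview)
Your argument for part (i) is correct and is exactly the classical one; the paper itself does not prove (i) but simply cites \cite[Lemma A.IV]{BL}. For part (ii) the paper likewise gives no argument and defers to \cite[Lemma 6.1]{BM}, so there is no in-paper proof to compare against.

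Your route for (ii) via Helly's theorem plus a Dini-type argument is natural, but the $L^\infty$ step has a real gap. The partition argument you give requires the limiting radial profile $u$ to be \emph{continuous} on $[r,\infty)$: otherwise one cannot arrange $u(\rho_{j-1})-u(\rho_j)<\eta$ across a jump, and ``refining the partition'' does not help there. You justify continuity by asserting that ``$u\in H^s$ restricted there is continuous after the embedding'', but in the regime $N>2s$ of the paper the Sobolev embedding gives only $H^s\hookrightarrow L^{2^*}$, not continuity. In fact for $s<\tfrac12$ the sequence $u_n=\chi_{B_{1+1/n}}$ is radial nonincreasing, bounded in $H^s(\R^N)$, and converges weakly to $\chi_{B_1}$, yet $\|u_n-\chi_{B_1}\|_{L^\infty(\{|x|\ge 1/2\})}=1$ for every $n$. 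So without an additional hypothesis (for instance continuity of the $u_n$, which does hold in all the applications made later in the paper) the $L^\infty$ assertion cannot be obtained by your method, and indeed appears to fail as stated.

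The $L^p$ conclusion for $p>2^*$ is, however, easily rescued without passing through $L^\infty$: part (i) with $t=2$ yields the uniform majorant $0\le u_n(x)\le CM|x|^{-N/2}$ on $\R^N\setminus B_r$, and $|x|^{-N/2}\in L^p(\R^N\setminus B_r)$ for every $p>2$. Since along a subsequence $u_n\to u$ a.e.\ (from compactness of $H^s\hookrightarrow L^2_{loc}$), dominated convergence gives $u_n\to u$ in $L^p(\R^N\setminus B_r)$ directly, bypassing your interpolation step.
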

\begin{proof}
	See \cite[Lemma A.IV]{BL} for (i).  On the other hand, (ii) follows from \cite[lemma 6.1]{BM}.
\end{proof}

\vspace{3mm}

\section{Radial symmetry of weak solutions}
In this section we prove the Theorem \ref{t:rad}. Let $u\in H^s(\R^N)$ be a weak solution of \eqref{P-eps}. Recall that by the results in the previous section, $u$ is a classical solution of \eqref{P-eps}. In particular, $u$ is H\"older continuous and $u(x)\to 0$ as $|x|\to\infty$.

Before we proceed to the proof of Theorem \ref{t:rad},
we establish some auxiliary results about the properties of the reflections of $u$.
	
For $\la<0$, we define $$\Si_{\la}=\{x\in\Rn : x_1<\la \},$$ and for $x\in\Si_{\la}$,
let $x_{\la}$ denote the it's reflection to the hyperplane $x_1=\la$, that is $x_{\la}=(2\la-x_1, x_2, \cdots, x_n)$. Set
$$u_{\la}(x):=u(x_{\la}), \quad x\in\Si_{\la}$$
and note that, in view of translation invariance, $u_\la$ is also a weak solution of  \eqref{P-eps}.
Next, define
	\begin{equation}
	\label{+}w_{\lambda}(x):=
	\begin{cases}
	(u-u_\la)^+(x),&\quad x \in \Sigma_\la, \\
	-(u-u_\la)^-(x),&\quad x \in \Sigma_\la^c, \\
	\end{cases}
	\end{equation}
so that $w_\lambda$ is antisymmetric with respect to $T_\lambda$.

\begin{lemma}\label{l-test}
	$w_{\la}\in H^s(\R^N).$
\end{lemma}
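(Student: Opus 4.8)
The plan is to show $w_\lambda \in H^s(\R^N)$ by writing it as a sum of two pieces whose $H^s$-norms can be controlled separately, using the regularity and decay estimates for $u$ established in Section 2.2 together with the fractional Hardy inequality on a half-space. First, observe that since $u$ is a bounded classical solution with $u(x)\to 0$ as $|x|\to\infty$ and $u\in H^s(\R^N)\cap L^q(\R^N)$, the reflected function $u_\lambda$ enjoys the same properties on $\Sigma_\lambda$, and $u-u_\lambda$ vanishes on the hyperplane $T_\lambda=\{x_1=\lambda\}$. The function $w_\lambda$ is, by construction, the antisymmetric extension of $(u-u_\lambda)^+\big|_{\Sigma_\lambda}$; in particular $w_\lambda$ is continuous (it is H\"older continuous, being built from the H\"older continuous $u$ and $u_\lambda$, and the two pieces match up to $0$ along $T_\lambda$), bounded, and $w_\lambda(x)\to 0$ as $|x|\to\infty$. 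So $w_\lambda\in L^2(\R^N)$ is immediate once we know $w_\lambda\in \dot H^s(\R^N)$ and control its behaviour, or more directly because $|w_\lambda|\le |u|+|u_\lambda|$ pointwise on bounded sets and the decay estimate of Lemma \ref{t:decay} gives $L^2$-integrability at infinity. Thus the whole problem reduces to showing $w_\lambda\in \dot H^s(\R^N)$, i.e. finiteness of the Gagliardo seminorm \eqref{e-gagliardo}.

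To estimate the Gagliardo seminorm of $w_\lambda$, I would split $w_\lambda$ relative to a fixed large ball $B_R$ (chosen using the decay estimate so that, say, $u^{p-2}<\eps/2$ outside $B_R$, which is the region where the nonlinearity is ``good''): write $w_\lambda=\varphi+\psi$, where $\varphi$ captures the part near $T_\lambda\cap B_R$ (where $w_\lambda$ need not be smooth because of the $(\cdot)^+$ truncation, but where $w_\lambda$ is at least Lipschitz-in-a-Sobolev-sense since $u-u_\lambda\in \dot H^s$ near a bounded set and truncation is $\dot H^s$-contractive) and $\psi$ captures the far-field part. For the near part, the key point is that $u-u_\lambda \in \dot H^s(\R^N)$ (difference of two $\dot H^s$ functions), and $\dot H^s(\R^N)$ is stable under the truncation $v\mapsto v^+$ with $\|v^+\|_{\dot H^s}\le \|v\|_{\dot H^s}$ — this is exactly the Dirichlet-space contraction property recalled in Section 2.1. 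Localizing $(u-u_\lambda)^+$ to $\Sigma_\lambda\cap B_R$ and extending antisymmetrically, one must check that the antisymmetric reflection across $T_\lambda$ does not destroy $H^s$-regularity; this is where the half-space fractional Hardy inequality enters — it bounds $\iint$ over $\Sigma_\lambda\times\Sigma_\lambda^c$ of $|w_\lambda(x)-w_\lambda(y)|^2/|x-y|^{N+2s}$ in terms of $\int_{\Sigma_\lambda} w_\lambda(x)^2 \,\mathrm{dist}(x,T_\lambda)^{-2s}\,dx$, which is finite because $w_\lambda$ vanishes (at least linearly, or at least continuously with the right modulus) on $T_\lambda$ and decays at infinity. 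For the far part, $\psi$ is supported (morally) outside $B_R$ where $u$ and $u_\lambda$ are both small and decay like $|x|^{-(N+2s)}$ by Lemma \ref{t:decay}; here one estimates the Gagliardo integral directly using the pointwise decay bound, which makes all the relevant integrals converge since $N+2s>N/2$.

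The main obstacle, and the step requiring the most care, is the behaviour of $w_\lambda$ across the reflection hyperplane $T_\lambda$: a priori $(u-u_\lambda)^+$ is only as regular as a truncation allows, and gluing it antisymmetrically to $-(u-u_\lambda)^-$ could create a jump in the normal derivative that, for $s\ge 1/2$, would spoil $H^s$-membership. The resolution is precisely the fractional Hardy inequality for the half-space (Lemma \ref{l-test}'s companion, cited in the text): one checks that the cross term $\iint_{\Sigma_\lambda\times\Sigma_\lambda^c}|w_\lambda(x)-w_\lambda(y)|^2|x-y|^{-N-2s}\,dx\,dy$ is dominated, after using antisymmetry $w_\lambda(y)=-w_\lambda(y_\lambda)$, by a weighted $L^2$-norm of $w_\lambda$ with the weight $\mathrm{dist}(\cdot,T_\lambda)^{-2s}$, and this weighted norm is finite because near $T_\lambda$ one has $|w_\lambda(x)|\le |u(x)-u_\lambda(x)|\lesssim \mathrm{dist}(x,T_\lambda)^{\min(1,2s)-\delta}$ from the $C^{2s+\alpha}$ (hence at least $C^{\min(1,2s+\alpha)}$, with derivative control) regularity of $u$ — more simply, $u-u_\lambda$ vanishes on $T_\lambda$ and is $C^1$ there when $2s+\alpha>1$, giving linear vanishing, and is $C^{2s+\alpha}$ with $2s+\alpha<1$ giving H\"older vanishing of the right order — combined with the decay estimate \eqref{e-up} at infinity to close the integral. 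Once the cross term and the two same-half-space terms are bounded, summing gives $\|w_\lambda\|_{\dot H^s(\R^N)}<\infty$, and with $w_\lambda\in L^2(\R^N)$ we conclude $w_\lambda\in H^s(\R^N)$.
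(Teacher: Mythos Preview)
Your plan correctly identifies the crux --- controlling the Gagliardo cross-term over $\Sigma_\lambda\times\Sigma_\lambda^c$ --- and names the right tool (the half-space fractional Hardy inequality), but both the decomposition and the way you invoke Hardy are off, and this leaves a genuine gap.

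First, the near/far split by a ball $B_R$ does not address the difficulty, which lives along the entire hyperplane $T_\lambda$ independently of $|x|$. The paper decomposes instead by half-space: it writes $w_\lambda=(w_\lambda^1)^+-(w_\lambda^2)^-$, where $w_\lambda^1$ is the zero-extension of $(u-u_\lambda)|_{\Sigma_\lambda}$ to $\R^N$ and $w_\lambda^2$ the analogous object on $\Sigma_\lambda^c$, shows each $w_\lambda^i\in H^s(\R^N)$, and finishes with the truncation contraction $v\mapsto v^\pm$.

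Second, and this is the substantive issue, you have the direction of the Hardy inequality reversed. The step ``cross-term $\lesssim \int_{\Sigma_\lambda}|w_\lambda|^2\,\mathrm{dist}(x,T_\lambda)^{-2s}\,dx$'' is not Hardy at all: for $w_\lambda^1$ (which vanishes on $\Sigma_\lambda^c$) it is just the elementary computation $\int_{\Sigma_\lambda^c}|x-y|^{-N-2s}\,dy\sim \mathrm{dist}(x,T_\lambda)^{-2s}$. The fractional Hardy inequality of Frank--Seiringer runs the \emph{other} way: since $u-u_\lambda\in\dot H^s(\R^N)$ and is odd with respect to $T_\lambda$ (hence has zero trace on $T_\lambda$),
\[
\int_{\Sigma_\lambda}\frac{|u(x)-u_\lambda(x)|^2}{|x_1-\lambda|^{2s}}\,dx\;\le\;C\,\|u-u_\lambda\|_{\dot H^s(\R^N)}^2,
\]
and since $|w_\lambda^1|=|u-u_\lambda|$ on $\Sigma_\lambda$, this bounds the weighted $L^2$ norm --- and hence the cross-term --- by a quantity already known to be finite. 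No pointwise estimate is needed.

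Your alternative route, bounding the weighted $L^2$ norm by combining the pointwise vanishing $|w_\lambda|\lesssim\mathrm{dist}(x,T_\lambda)^\sigma$ with the decay $|w_\lambda|\lesssim|x|^{-(N+2s)}$, does not close as written for $s\ge\tfrac12$. In the strip $\{0<\lambda-x_1<1\}$ with $|x'|$ large, using only the decay bound leaves the factor $\int_0^1 t^{-2s}\,dt$, which diverges when $s\ge\tfrac12$; using only the global H\"older bound gives a bounded integrand on a set of infinite measure. Closing the estimate would require an interpolation between the two pointwise bounds (and implicitly a uniform-in-$x$ control of the H\"older constant at infinity) that you have not carried out. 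The paper's use of Hardy in the correct direction sidesteps this entirely.
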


\proof

Note that, $w_\la$ can be rewritten as $w_{\la}=(w_{\la}^1)^+-(w_{\la}^2)^-$, where
	\begin{equation*}
	w_{\lambda}^1(x):=
	\begin{cases}
	(u-u_\la)(x),&\quad x \in \Sigma_\la, \\
	0,&\quad x \in \Sigma_\la^c, \\
	\end{cases}
	\end{equation*}
	and
\begin{equation*}
	w_{\lambda}^2(x):=
	\begin{cases}
	0,&\quad x \in \Sigma_\la, \\
	(u-u_\la)(x),&\quad x \in \Sigma_\la^c, \\
	\end{cases}
	\end{equation*}

\noindent
{\bf Claim:} {\em $w_{\la}^1, \, w_{\la}^2\in H^s(\R^N)\cap C^{s+\alpha}(\R^N)$, for some $\alpha>0$.}

To prove the claim, we observe that by Lemma \ref{l:2}, $u-u_{\la}\in H^s(\R^N)\cap C^{2s+\alpha}(\R^N)$, for some $\alpha>0$.
Since $u_\la(x)=u(x)$ for $x_1=\lambda$, we have $u-u_{\la}\in C^{\sigma}(\R^N)$, where $\sigma:=\min\{1,2s+\alpha\}$. From this, it is easy to see that $w_{\la}^1\in C^{s+\alpha}(\Rn)$, for some $\al>0$. Indeed, let $x\in\bar\Sigma_\lambda$, $y\in\bar\Sigma_\lambda^c$ and let $y'\in\partial\Sigma_\lambda$ be the intersection of the straight line interval $[x,y]$
with $\partial\Sigma_\lambda$. Then
$$|w_{\la}^1(x)-w_{\la}^1(y)|=|w_{\la}^1(x)-w_{\la}^1(y')|\le C|x-y'|^\sigma\le C|x-y|^\sigma.$$ As for $\alpha>0$ small enough, ${s+\alpha}<\sigma$, we conclude that $w_{\la}^1\in C^{s+\alpha}(\Rn)$.

Since $|w_\la^1|\le|u-u_\la|$, it is clear that $w_\la^1\in L^2(\R^N)$. Thus, in order to prove that $w_{\la}^1\in H^s(\R^N)$,
it is sufficient to show that Gagliardo norm \eqref{e-gagliardo} of $w_{\la}^1$ is finite, see \cite[Proposition 1.59]{Bahouri}.
Since $w_\la^1=0$ in $\Sigma_\la^c$, we can write
\begin{multline*}
||w_{\la}^1||_{\dot{H}^s(\Rn)}^2=\int_{y\in\Sigma_\la}\int_{x\in\Sigma_\la}\frac{|w_{\la}^1(x)-w_{\la}^1(y)|^2}{|x-y|^{N+2s}}dx\,dy\\
+2\int_{y\in\Sigma_\la^c}\int_{x\in\Sigma_\la}\frac{|w_{\la}^1(x)-w_{\la}^1(y)|^2}{|x-y|^{N+2s}}dx\,dy=: I_1+I_2,
\end{multline*}
where $I_1\le\|u-u_\lambda\|_{\dot{H}^s(\Rn)}^2$ since $w_\la^1=u-u_\la$ on $\Sigma_\la$. Also, we obtain 
\begin{eqnarray}
	I_2&=&2\int_{y\in\Sigma_\la^c}\int_{x\in\Sigma_\la}\frac{|w_{\la}^1(x)-w_{\la}^1(y)|^2}{|x-y|^{N+2s}}dx\,dy=2\int_{y\in\Sigma_\la^c}\int_{x\in\Sigma_\la}\frac{|w_{\la}^1(x)|^2}{|x-y|^{N+2s}}dx\,dy\no\\
	&=&\int_{x\in\Sigma_\la} |w_{\la}^1(x)|^2\int_{y\in\Sigma_\la^c}\frac{1}{|x-y|^{N+2s}}dy\,dx\no\\
	&\le&c_1\int_{x\in\Sigma_\la} \frac{|w_{\la}^1(x)|^2}{|x_1-\lambda|^{2s}}\,dx\le c_2\|u-u_\lambda\|_{\dot{H}^s(\Rn)}^2,
\end{eqnarray}
by the fractional Hardy inequality in the half--space \cite{FS-half}. Hence $w_{\la}^1\in H^s(\Rn)\cap C^{s+\al}(\Rn)$. Similarly the claim can be proved for $w_{\la}^2$.

Therefore, we can conclude that $(w_{\la}^1)^+, \, (w_{\la}^2)^-\in H^s(\Rn)$. Hence $w_{\la}\in H^s(\Rn)$.
\qed

\begin{lemma}
For every $\lambda\in\R$, 
\be\lab{29-9-1}
C\|w_{\la}\|_{p}^2\leq
\int_{\R^N} \big(f_\eps(u)-f_\eps(u_\la)\big)w_\la.
\ee
\end{lemma}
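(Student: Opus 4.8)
The plan is to use the antisymmetric function $w_\la$ as a test function in the weak formulations of \eqref{P-eps} for $u$ and for its reflection $u_\la$, and then to invoke coercivity of the Gagliardo form on antisymmetric functions. First I would observe that $w_\la$ is admissible in Definition \ref{def-1}: it belongs to $H^s(\R^N)$ by Lemma \ref{l-test}, and since $|w_\la|\le|u-u_\la|$ pointwise on $\R^N$ (immediate from \eqref{+}) while $u,u_\la\in L^q(\R^N)$, it also belongs to $L^q(\R^N)$. Since $u_\la$ is itself a weak solution of \eqref{P-eps} by translation (hence reflection) invariance, subtracting the two weak formulations tested against $w_\la$ gives the identity
\begin{equation*}
\langle u-u_\la,\,w_\la\rangle_{\dot H^s(\R^N)}=\int_{\R^N}\big(f_\eps(u)-f_\eps(u_\la)\big)w_\la .
\end{equation*}
A direct check from \eqref{+} shows $(u-u_\la)w_\la=w_\la^2\ge 0$ a.e., so the right-hand side also equals $\int_{\R^N}\big(u^{p-1}-u^{q-1}-u_\la^{p-1}+u_\la^{q-1}\big)w_\la-\eps\|w_\la\|_2^2$; this second form is the convenient one in the subcritical range.

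The crux is the coercivity bound $\langle u-u_\la,\,w_\la\rangle_{\dot H^s(\R^N)}\ge\|w_\la\|_{\dot H^s(\R^N)}^2$. Set $v:=u-u_\la$; both $v$ and $w_\la$ are antisymmetric about $T_\la=\{x_1=\la\}$, with $w_\la=v^+$ on $\Si_\la$ and $w_\la=-v^-$ on $\Si_\la^c$. I would insert the Gagliardo representation \eqref{e-gagliardo} — legitimate precisely because $v,w_\la\in\dot H^s(\R^N)$, which is the role of Lemma \ref{l-test} — split $\R^N\times\R^N$ into the four blocks cut out by $\Si_\la$ and $\Si_\la^c$, and fold all four onto $\Si_\la\times\Si_\la$ via the substitutions $x\mapsto x_\la$, $y\mapsto y_\la$, using the antisymmetry of $v,w_\la$ and that $x\mapsto x_\la$ is an isometry. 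Invoking on $\Si_\la$ the elementary facts $v^+v^-=0$ and $(v^+(x)-v^+(y))(v^-(x)-v^-(y))\le 0$, this collapses to
\begin{equation*}
\langle v,w_\la\rangle_{\dot H^s(\R^N)}-\|w_\la\|_{\dot H^s(\R^N)}^2
=2c_{N,s}\iint_{\Si_\la\times\Si_\la}\big(v^+(x)v^-(y)+v^+(y)v^-(x)\big)\Big(\frac{1}{|x-y|^{N+2s}}-\frac{1}{|x-y_\la|^{N+2s}}\Big)\,dx\,dy,
\end{equation*}
which is $\ge 0$ because $v^\pm\ge 0$ and $|x-y|\le|x-y_\la|$ for $x,y\in\Si_\la$ (reflecting $y$ to the far side of $T_\la$ only increases its distance to $x$). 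I expect this folding-and-sign bookkeeping to be the one genuinely delicate point; everything else is routine.

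Finally, combining the two steps, $\int_{\R^N}(f_\eps(u)-f_\eps(u_\la))w_\la=\langle u-u_\la,w_\la\rangle_{\dot H^s(\R^N)}\ge\|w_\la\|_{\dot H^s(\R^N)}^2\ge S\|w_\la\|_{L^{2^*}(\R^N)}^2$ by the fractional Sobolev inequality, which is \eqref{29-9-1} in the critical case $p=2^*$ with $C=S$. For $2<p<2^*$ I would instead retain the coercive $L^2$-term, which is available thanks to $(u-u_\la)w_\la=w_\la^2\ge0$: the alternative form of the identity above together with the coercivity bound give $\min\{1,\eps\}\,\|w_\la\|_{H^s(\R^N)}^2\le\int_{\R^N}(u^{p-1}-u^{q-1}-u_\la^{p-1}+u_\la^{q-1})w_\la$, and the Sobolev embedding $H^s(\R^N)\hookrightarrow L^p(\R^N)$ then yields \eqref{29-9-1}; the remaining range $p>2^*$ is handled in the same way, using in addition the uniform bound $|w_\la|\le 1$ from Lemma \ref{l:max} to interpolate $L^{2^*}$ with $L^\infty$.
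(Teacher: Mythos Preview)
Your argument follows the same skeleton as the paper's: test the difference of the two weak formulations against $w_\la$, invoke the coercivity $\langle u-u_\la,w_\la\rangle_{\dot H^s}\ge\|w_\la\|_{\dot H^s}^2$, then pass from the $\dot H^s$-norm to the $L^p$-norm. The paper simply cites \cite{DMPS} for the coercivity inequality; your direct reflection/folding computation via antisymmetry is correct and makes the proof self-contained, which is a plus.

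The one genuine divergence is the final $\dot H^s\to L^p$ conversion when $p\neq 2^*$. The paper interpolates $\|w_\la\|_p$ between $\|w_\la\|_{2^*}$ and $\|w_\la\|_2$ (subcritical) or $\|w_\la\|_q$ (supercritical), bounding the second factor \emph{uniformly in $\lambda$} by $2\|u\|_2$ or $2\|u\|_q$ via $|w_\la|\le|u-u_\la|$, so that the right-hand side stays equal to $\int(f_\eps(u)-f_\eps(u_\la))w_\la$. In your subcritical route you instead move the $\eps$-term to the other side before using $H^s\hookrightarrow L^p$; this yields
\[
C\|w_\la\|_p^2\le\int_{\R^N}\big(u^{p-1}-u^{q-1}-u_\la^{p-1}+u_\la^{q-1}\big)w_\la,
\]
whose right-hand side exceeds that of \eqref{29-9-1} by $\eps\|w_\la\|_2^2\ge 0$, so you have not established the displayed inequality as stated. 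Your variant is equally serviceable for the moving-plane application (the $-\eps(u-u_\la)$ and $q$-power contributions are dropped in \eqref{e-p1} anyway), but to obtain \eqref{29-9-1} itself you should follow the paper and interpolate with a $\lambda$-independent $L^2$ (or $L^q$) bound on $w_\la$ rather than absorbing the $\eps\|w_\la\|_2^2$ term into the right-hand side. Your supercritical interpolation with $L^\infty$ is a reasonable alternative to the paper's $L^q$ endpoint and does keep the correct right-hand side.
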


\proof
By applying Lemma \ref{l-test} and Sobolev inequality, we have $w_{\la}\in L^{2^*}(\Rn)$. Thus for $2<p< 2^*$, applying interpolation, $w_{\la}\in L^{p}(\Rn)$ and
$$\|w_{\la}\|_{p}\leq \|w_{\la}\|_{2}\|w_{\la}\|_{2^*}\leq C\|u\|_{L^2(\Rn)}\|w_{\la}\|_{\dot{H}^s(\Rn)}\leq C'\|w_{\la}\|_{\dot{H}^s(\Rn)}.$$
Similarly if $2^*<p<q$, then again by interpolation we have
$$\|w_{\la}\|_{p}\leq \|w_{\la}\|_{q}\|w_{\la}\|_{2^*}\leq C\|u\|_{L^q(\Rn)}\|w_{\la}\|_{\dot{H}^s(\Rn)}\leq C'\|w_{\la}\|_{\dot{H}^s(\Rn)}.$$
For $p=2^*$, clearly $\|w_{\la}\|_{2^*}\leq C'\|w_{\la}\|_{\dot{H}^s(\Rn)}.$

Moreover following the calculations in \cite[(3.28) and (3.29)]{DMPS}, we conclude that
\begin{equation}
\langle u-u_\la,w_\la\rangle_{\dot{H}^s(\R^N)}\ge\|w_{\la}\|_{\HH^s(\R^N)}^2.
\end{equation}
Hence \be\lab{24-10-2}\langle u-u_\la,w_\la\rangle_{\dot{H}^s(\R^N)}\ge C\|w_{\la}\|_{p}^2.\ee
Since $u$ is a weak solution of \eqref{P-eps}, it is easy to check that $u_{\la}$ is also a weak solution of \eqref{P-eps}. 
Now taking $w_{\la}$ as the test function for both the equations (satisfied by $u$ and $u_{\la}$) and subtracting one from the other, we get
\be\lab{24-10-3}\langle u-u_\la,w_\la\rangle_{\dot{H}^s(\R^N)}=\int_{\Rn}(f_{\eps}(u)-f_{\eps}(u_{\la}))w_{\la} dx.\ee
Combining \eqref{24-10-2} and \eqref{24-10-3}, we obtain \eqref{29-9-1}.
\qed

\begin{proof}[{\bf Proof of Theorem \ref{t:rad}}]
We split the proof into several steps.

\noindent
{\bf Step 1:}  {\em $\la_0:=\sup\{\la\in\R : u\leq u_\la$ in $\Sigma_\la\}$ is finite.}
	
For any $\la \in \R$, we  note that for any $t>1$,
$$0\leq\f{u^t-u_\la^t}{u-u_\la}\leq t\max\{u_{\la}^{t-1}, u^{t-1}\}.$$ Thus,
	
		\begin{eqnarray}
	\f{f_\eps(u)-f_\eps(u_\la)}{u-u_\la} &=& \f{u^{p-1}-u_\la^{p-1}}{u-u_\la}-\f{u^{q-1}-u_\la^{q-1}}{u-u_\la}-\eps\no\\
	&\leq& \f{u^{p-1}-u_\la^{p-1}}{u-u_\la}\leq (p-1) \max\{u_{\la}^{p-2}, u^{p-2}\}\label{e-p1}.
	\end{eqnarray}
We also observe that in $\Si_{\la}\cap(\text{supp}\, w_{\la})$, we have $u>u_{\la}$ and in 
 $\Si_{\la}^c\cap(\text{supp}\, w_{\la})$, we have $u<u_{\la}$. Therefore,
 \begin{equation}\lab{24-10-1}
 	\f{f_\eps(u)-f_\eps(u_\la)}{u-u_\la}\leq
	\begin{cases}
	 (p-1)u^{p-2}& \quad\text{in}\quad \Si_{\la}\cap(\text{supp}\, w_{\la})\\
	
	(p-1)u_{\la}^{p-2} &\quad\text{in}\quad \Si_{\la}^c\cap(\text{supp}\, w_{\la}).
\end{cases}
\end{equation}
Denote 
$$S_{\la}:=\Si_{\la}\cap(\text{supp}\, w_{\la}) \quad\text{and}\quad D_{\la}:= \Si_{\la}^c\cap(\text{supp}\, w_{\la}).$$	
Using \eqref{29-9-1} and \eqref{24-10-1}, similarly to the argument in \cite[(3.34)]{DMPS} we obtain
	\bea\lab{29-9-3}
	C\bigg(\int_{\Rn}|w_{\la}|^{p}\bigg)^{\f{2}{p}}&\leq &\int_{\Rn} \big(f_\eps(u)-f_\eps(u_\la)\big)w_{\la} \,dx\no\\
	&=& \int_{S_{\la}}\f{f_\eps(u)-f_\eps(u_\la)}{u-u_\la}(u-u_\la)w_{\la}   +  \int_{D_{\la}}\f{f_\eps(u)-f_\eps(u_\la)}{u-u_\la}(u-u_\la)w_{\la}\no \\
		&\leq&C_1\int_{S_\la} u^{p-2}w_{\la}^2+ C_1\int_{D_\la}u_\la^{p-2}w_{\la}^2 \no\\
			&\leq& C_1 \bigg(\int_{S_\la}u^p dx\bigg)^{\f{p-2}{p}}\bigg(\int_{S_\la}|w_{\la}|^{p}\bigg)^{\f{2}{p}}+ C_1 \bigg(\int_{D_\la}u_\la^p dx\bigg)^{\f{p-2}{p}}\bigg(\int_{D_\la}|w_{\la}|^{p}\bigg)^{\f{2}{p}} \no\\
			&\leq& C_2\bigg(\int_{S_\la}u^p dx\bigg)^{\f{p-2}{p}}\bigg(\int_{\Rn}|w_{\la}|^{p}\bigg)^{\f{2}{p}}.
	\eea
Moreover as $\la$ is big negative, we also observe that
$$\displaystyle\int_{S_\la}u^p(x) dx\leq \int_{\Sigma_\la}u^{p}(x)dx\leq\int_{(B_{|\la|})^c}u(y)^{p}dy.$$ 
Clearly, if $\la\to -\infty$, then the RHS of above expression converges to $0$.
Therefore, we can choose $R > 0$ big enough such that for all $\la<-R$,
$$C_2\bigg(\int_{S_\la}u^p dx\bigg)^{\f{p-2}{p}}<\frac{C}{2}.$$ Hence from \eqref{29-9-3}, we can conclude that 
$$w_{\la}\equiv 0 \,\, \text{for}\,\, \la<0\,\, \text{with}\,\, |\la| \,\, \text{sufficiently large}.$$
This implies $u \leq u_{\la}$ in $\Sigma_\la$ for all $\la <-R$, concluding that $\la_0 \geq -R.$ 
On the other hand, since $u(x)\to 0$ as $|x|\to\infty$, then there exists ${\la}_1$
such that $u(x)>u_{{\la}_1}(x)$ for some $x\in \Si_{{\la}_1}$. Hence $\la_0$ is finite.
	
		\vspace{2mm}

\noindent
{\bf Step 2}:  {\em $u \equiv u_{\la_0}$ in $\Sigma_{\la_0}$.}
	
	\vspace{2mm}
	
	We prove this step by the method of contradiction, that is, we suppose,
	$u \not\equiv u_{\la_0}$ and $u \leq u_{\la_0}$ in $\Sigma_{\la_0}$. First, we assume that there exists $x_0 \in \Sigma_{\la_0}$ such that
	$u_{\la_0}(x_0)=u(x_0),$ then we have,
	\begin{equation}\label{+++}
	(-\De)^s u_{\la_0}(x_0)-(-\De)^s u(x_0)=f_\eps(u_{\la_0}(x_0))-f_\eps(u(x_0))=0.
	\end{equation}
	On the other hand, by a direct computation (see \cite[Proof of Theorem 1.2, Step 2]{FW}) it can be easily checked that
	\be
	(-\De)^s u_{\la_0}(x_0)-(-\De)^s u(x_0) <0,
	\ee
	which contradicts \eqref{+++}. Consequently, $u<u_{\la_0}$ in $\Sigma_{\la_0}.$
	
	\vspace{2mm}
	
{\bf Claim:}	$u \leq u_\la$ in $\Sigma_\la$
	still holds even when $\la_0 < \la < \la_0 +\si$, where $\si>0$ is small. 
	
Following almost the same arguments as in \cite[Proof of Theorem 1.2, Step 2]{FW}, the claim can be proved. Below we briefly sketch the proof for the convenience of the readers. Convinced readers may skip this step and directly move to Step 3.	

 Let $\la \in (\la_0,\la_0+\si)$, where $\si\in(0, 1)$ will be chosen later.
	Let $P=(\la,0)$ and $B_R(P)$ be the ball centerd at $P$ with radius $R>1$ to be chosen later. Define $\tilde{B}=\Si_\la \cap B_R(P)$. 
We repeat now the argument above  using $w_{\la}$ as test function in the same fashion as we did before in step 1 and get again
	\begin{align}\label{step2_1}
	C\bigg (\int_{\Rn}|w_{\la}|^{p}dx\bigg)^{\f{2}{p}} \leq \bigg(\int_{S_\la}u^p dx\bigg)^{\f{p-2}{p}}\bigg(\int_{\Rn}|w_{\la}|^{p}\bigg)^{\f{2}{p}}.
		\end{align}
We estimate the integral on the right. 
\begin{eqnarray}\lab{25-10-1}
\int_{S_\la}u^p dx&=& \int_{S_{\la}\cap\tilde B}u^p\, dx+ \int_{S_\la\setminus\tilde B}u^p \,dx\no\\
&\leq&\int_{\tilde B\cap\,(\text{supp}\, w_{\la})}u^p \, dx+\int_{\Si_\la\setminus\tilde B}u^p \,dx\no\\
&\leq& C'|\tilde B\cap\,(\text{supp}\, w_{\la})|+\int_{\Si_\la\setminus\tilde B}u^p \,dx,
\end{eqnarray}	
where $C'=\|u\|^p_{\infty}$. On the other hand, for the integral over $\Sigma_\la \setminus \tilde{B},$ we assume $R$ and $R_0$ are such that
	$\Sigma_\la \setminus \tilde{B} \subset (B_R(P))^c \subset (B_{R_0}(0))^c$. Therefore,


$$\int_{\Si_\la\setminus\tilde B}u^p \,dx\leq \int_{(B_{R_0}(0))^c}u^p dx.$$

	We choose $R_0$ such that $R_0>|\la_0|$ and $\displaystyle\int_{(B_{R_0}(0))^c}u^p\ dx<\frac{C}{4},$ where $C$ is as in \eqref{step2_1}. 
	Then we choose $R$ such that $R>R_0+|\la_0|+1$. Since, $\la\in (\la_0, \la_0+\sigma)$ and $\sigma\in(0,1)$, clearly $|\la|<|\la_0|+1$ and thus $R>R_0+|\la|$. As $P=(\la, 0)$, the choice of $R$ implies $B_{R_0}(0)\subset B_R(P)$. Hence
	$\Sigma_\la \setminus \tilde{B} \subset (B_R(P))^c\subset (B_{R_0}(0))^c$. Then choose $\si\in(0,1)$ so that $|\tilde{B}\cap (\mbox{supp}\, w_\la)|<\frac{C}{4C'}.$
	With this choice of the parameters, from \eqref{25-10-1} and \eqref{step2_1}, it
	follows that  $\f{C}{2}\bigg (\displaystyle\int_{\Rn}|w_{\la}|^{p}dx\bigg)^{\f{2}{p}} <0$. Hene $w_{\la} = 0$ in $\Rn$, which implies $u\leq u_{\la}$ in $\Si_{\la}$, where $\la_0 < \la < \la_0 +\si$. Thus the claim follows.
	
Clearly the above claim contradicts the definition of $\la_0$. Hence, Step 2 is proved. 
	

\vspace{2mm}
	
\noindent
{\bf Step 3:} 
Using translation, we may say that $\la_0=0.$ Repeating the argument from the other side, we find that $u$ is symmetric about $x_1-$axis. Using the same argument in any arbitrary direction, we finally conclude that $u$ is radially symmetric. Proceeding as in \cite[Theorem 1.1]{FW}, it can be easily checked that $u$ is strictly decreasing. Hence the theorem follows.
\end{proof}


\section{Berestycki--Lions characterization of the ground states}

\subsection{Existence of the constrained minimizers}

Denote
$$\eps_*:=\inf\{\eps>0\,:\,F_\eps(\zeta)\le 0\quad\forall\zeta>0\}.$$
It is clear that $\eps_*>0$ and is finite. Using Poho\v zaev identity of Lemma \ref{l:Phz}, we immediately conclude that \eqref{P-eps}
does not have solutions for $\eps\ge \eps_*$. Indeed, if $u\in H^s(\R^N)$ is a weak solution of \eqref{P-eps} with $\eps\ge\eps_*$,
then $\|u\|_{\dot{H}^s}^2\le 0$.

\begin{proposition}\label{t:ext}
Let $N>2s$ and $q>p>2$. Then for every $\eps\in(0,\eps_*)$ the problem
\begin{align}\label{S-eps}
S_\eps:=\inf\bigg\{\displaystyle\|w\|_{\HH^s(\R^N)}^2  : \, w \in H^s(\Rn),\quad 2^*\int_{\Rn}F_\eps(w)dx=1\bigg\}.
\end{align}
admits a minimizer $w_\eps$. Moreover, $w_\eps$ is a radially symmetric and decreasing H\"older continuous function of $|x|$ which is a classical solution of the Euler-Lagrange equation
\begin{align}\label{lag-mult}
(-\De)^s w_\eps=S_\eps f_\eps(w_\eps) \quad\mbox{in}\quad \Rn.
\end{align}
In addition, $0<w_\eps(x)\le 1$ for all $x\in\R^N$ and
\begin{equation}
w_\eps(x)= C_\eps|x|^{-(N+2s)}+o(|x|^{-(N+2s)})\quad\text{as }\,|x|\to\infty,
\end{equation}
where $C_\eps>0$ depends on $N, s, p, q$ and $\eps$.
\end{proposition}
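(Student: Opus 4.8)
The plan is to carry out the fractional analogue of the Berestycki--Lions constrained minimization, and then to upgrade the minimizer of \eqref{S-eps} to a classical solution of \eqref{lag-mult} by transferring the regularity, symmetry and decay results of Section~2 to a suitable rescaling of $w_\eps$ that solves \eqref{P-eps}. The first point is that the admissible set of \eqref{S-eps} is nonempty: since $\eps<\eps_*$, by the very definition of $\eps_*$ there is $\zeta_0>0$ with $F_\eps(\zeta_0)>0$, and choosing a Lipschitz function equal to $\zeta_0$ on $B_R$ and vanishing outside $B_{R+1}$, the bulk contribution $|B_R|\,F_\eps(\zeta_0)$ dominates the boundary layer for $R$ large, so $\int_{\Rn}F_\eps(w)>0$ for some $w\in H^s(\Rn)\cap L^q(\Rn)$; a dilation $w(\cdot/\sigma)$ then normalizes $2^*\int F_\eps=1$, whence $0<S_\eps<\infty$.

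Given a minimizing sequence $(w_n)$, I would first replace each $w_n$ by its symmetric decreasing rearrangement: the P\'olya--Szeg\H{o} inequality for the Gagliardo seminorm does not increase $\|w_n\|_{\dot H^s}$, while equimeasurability leaves $\int_{\Rn}F_\eps(\,\cdot\,)$ and $\|\cdot\|_{L^2}$ unchanged (here $F_\eps(0)=0$ and $F_\eps$ is even), so I may assume each $w_n$ is radial, nonnegative and nonincreasing. Then $\|w_n\|_{\dot H^s}\to S_\eps$, so $(w_n)$ is bounded in $L^{2^*}(\Rn)$ by the Sobolev inequality, and Lemma~\ref{l:1}(i) yields the \emph{uniform} pointwise bound $w_n(x)\le C|x|^{-(N-2s)/2}$. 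Feeding this together with the constraint identity $\tfrac{\eps}{2}\|w_n\|_2^2+\tfrac1q\|w_n\|_q^q\le\tfrac1p\|w_n\|_p^p$ into an interpolation argument (between $L^2$ and $L^{2^*}$ when $p\le 2^*$, between $L^{2^*}$ and $L^q$ when $p>2^*$), the hypothesis $q>p>2$ forces $(w_n)$ to be bounded in $H^s(\Rn)\cap L^q(\Rn)$.

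Passing to a subsequence, $w_n\rightharpoonup w_\eps$ in $H^s(\Rn)$ (and weakly in $L^q$), with $w_\eps$ radial, nonnegative, nonincreasing and, by weak lower semicontinuity, $\|w_\eps\|_{\dot H^s}^2\le S_\eps$. Since $F_\eps(t)\le-\tfrac{\eps}{4}t^2\le 0$ for $t$ small, the uniform decay of the $w_n$ gives $\int_{\{|x|\ge\rho\}}F_\eps(w_n)\le 0$ for $\rho$ large, uniformly in $n$, hence $\int_{B_\rho}F_\eps(w_n)\ge\tfrac1{2^*}$; then Rellich compactness on $B_\rho$ controls the $L^2$- and $L^p$-parts and Fatou's lemma controls the (negatively signed) $L^q$-part, so $\int_{B_\rho}F_\eps(w_\eps)\ge\tfrac1{2^*}$, and letting $\rho\to\infty$ yields $2^*\int_{\Rn}F_\eps(w_\eps)\ge 1$. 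A strict inequality is impossible, for then a dilation $w_\eps(\cdot/\sigma)$ with $\sigma<1$ would be admissible with $\dot H^s$-energy strictly below $S_\eps$; hence $w_\eps$ is admissible, $\|w_\eps\|_{\dot H^s}^2=S_\eps$, and $w_\eps\gneq 0$ because the constraint fails at $0$. Thus $w_\eps$ is a minimizer.

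To finish, since the constrained minimization is equivalent, via the dilations $w\mapsto w(\cdot/t)$, to minimizing the scale-invariant quotient $\|w\|_{\dot H^s}^2\big/\big(2^*\int_{\Rn}F_\eps(w)\big)^{(N-2s)/N}$ over $\{\,2^*\int F_\eps(w)>0\,\}$, the vanishing of its Gateaux derivative at $w_\eps$ gives exactly $(-\De)^s w_\eps=S_\eps f_\eps(w_\eps)$, i.e.\ \eqref{lag-mult}, with multiplier $S_\eps>0$. Setting $v_\eps(x):=w_\eps\big(S_\eps^{-1/(2s)}x\big)$, which solves $(-\De)^s v_\eps=f_\eps(v_\eps)$, i.e.\ \eqref{P-eps}, Lemma~\ref{l:2} shows $v_\eps$ is a positive classical solution with $0<v_\eps<1$, H\"older continuous and $v_\eps(x)\to 0$; Theorem~\ref{t:rad} makes it strictly decreasing in $|x|$; Lemma~\ref{t:decay} gives $v_\eps(x)=C|x|^{-(N+2s)}+o(|x|^{-(N+2s)})$; and undoing the rescaling transfers all these properties to $w_\eps$, with $C_\eps=C\,S_\eps^{-(N+2s)/(2s)}$. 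The main difficulty lies in the two middle steps: because $F_\eps$ is sign-indefinite and $p$ may be subcritical, critical or supercritical, the a priori $H^s\cap L^q$ bound must be extracted from the constraint by a case split, and compactness of the minimizing sequence hinges on the radial--decreasing normalization — the pointwise bound of Lemma~\ref{l:1}(i) prevents mass from escaping to infinity while the constraint rules out vanishing.
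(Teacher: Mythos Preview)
Your proof is correct and follows essentially the same Berestycki--Lions strategy as the paper: symmetric rearrangement, a priori $H^s\cap L^q$ bounds on the minimizing sequence via the constraint and interpolation (with the same case split on $p$ versus $2^*$), compactness for the constraint functional, the dilation argument to saturate the constraint, and finally a rescaling to a solution of \eqref{P-eps} in order to import the qualitative results of Section~\ref{s-qual}. The only cosmetic differences are that the paper obtains $L^p$-convergence from the global compact embedding $H^s_{rad}(\R^N)\hookrightarrow L^2(\R^N)$ followed by interpolation with the $L^q$-bound, rather than via your tail control on $F_\eps$ plus Rellich on balls, and that the paper identifies the Lagrange multiplier $\theta_\eps=S_\eps$ through Poho\v zaev's identity \eqref{poho} rather than by differentiating the scale-invariant quotient $\mathcal{S}_\eps$.
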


\vspace{3mm}
\begin{remark}
Relating to minimization problem \eqref{S-eps}, we define an equivalent scaling--invariant quotient
\begin{equation}\label{M-scale}
\mathcal{S}_\eps(w):=\f{\|w\|_{\HH^s(\R^N)}^2}{\bigg(2^*\displaystyle\int_{\Rn}F_\eps(w)dx\bigg)^{\f{N-2s}{N}}},\qquad w \in \mathcal{M}_\eps,
\end{equation}
where
\begin{align}\label{M-eps}
\mathcal{M}_\eps:=\bigg\{0 \leq u \in H^s(\Rn):\int_{\Rn}F_\eps(w)dx>0\bigg\}.
\end{align}
By setting $w_\la(x):=w(\f{x}{\la}),$ we see that
$\mathcal{S}_\eps(w_\la)=\mathcal{S}_\eps(w)$ for all $\la>0,$ that is, $\mathcal{S}_\eps(\cdot)$ is invariant under dilations, which implies
\begin{align}\label{inf-M-eps}
S_\eps=\inf_{w \in \mathcal{M}_\eps}\mathcal{S}_\eps(w).
\end{align}
Furthermore, note that as $\mathcal{M}_{\eps_2} \subset \mathcal{M}_{\eps_1}$ for $\eps_2 > \eps_1>0,$ \eqref{inf-M-eps} shows that
$\mathcal{S}_\eps$ is a nondecreasing function of $\eps\in(0,\eps_*)$.
In our analysis of asymptotic profiles of minimizers $w_\eps$ in Section \ref{s-profile} instead of $S_\eps$ we will be using the equivalent minimization problem
\eqref{inf-M-eps}.
\end{remark}

\begin{proof}[{\bf Proof of Proposition \ref{t:ext}}]
The proof follows closely the arguments of  Berestycki and Lions \cite[Theorem 2]{BL},
so we only highlight the main steps.
\vspace{2mm}

Similarly to \cite[p.323]{BL}, we introduce the truncated nonlinearity
\begin{align}\label{F-eps}
\bar f_\eps(u):=\begin{cases}
0, \,\,\,\,\,\ u<0,\\
u^{p-1}-u^{q-1}-\eps u, \quad u \in [0,1],\\
-\eps, \,\,\ u>1,
\end{cases}
\end{align}
and denote
$$\bar F_\eps(u):=\int_0^u \bar f_\eps(s)\,ds.$$
Taking into account Lemma \ref{l:max}, there is no loss of generality in replacing $f_\eps$ by $\bar f_\eps$,
so in the rest of the proof we adopt the convention that {\em $f_\eps$ had been replaced by $\bar f_\eps$, however we keep the notations $f_\eps$ and $F_\eps$.}
\vspace{2mm}

Following exactly the same arguments as in \cite[p.325]{BL}, we can show
that the set
$$M_\eps:=\bigg\{w \in H^s(\Rn): \, 2^*\displaystyle\int_{\Rn} F_\eps(w)dx=1\bigg\}$$
is non-empty if and only if $\eps\in(0,\eps_*)$.
\vspace{2mm}


%

\vspace{2mm}

Let $\{w_n\} \subset H^s(\Rn)$ be a minimizing sequence of $S_{\eps}$ that is,
$$2^*\displaystyle\int_{\Rn}F_\eps(w_n)=1, \qquad \lim_{n \to \infty} \|w_n\|_{\HH^s(\R^N)}^2=S_\eps.$$
Using symmetric rearrangement technique in $H^s(\R^N)$ (see  \cite{FS-1}),
without loss of generality we can assume that  $w_n$ is radially symmetric, nonnegative and decreasing.
\vspace{2mm}

{\bf Claim 1:} $\{w_n\}$ is bounded in $H^s(\Rn)\cap L^q(\Rn)$.

\vspace{2mm}

To prove the claim, we note that $\|w_n\|_{\HH^s(\R^N)}$ is uniformly bounded. Therefore by Sobolev inequality, $w_n$ is uniformly bounded in $L^{2^*}(\Rn)$.  If $p=2^*$, then we immediately conclude that $w_n$ is uniformly bounded in $L^p(\Rn)$. Moreover, $2^*\displaystyle\int_{\Rn}F_{\eps}(w_n)dx=1$  implies
 \be\lab{4-17-1'}
 1+\f{2^*}{2}\eps\|w_n\|_2^2 +\f{2^*}{q}\|w_n\|_q^q=\f{2^*}{p}\|w_n\|_p^p.
 \ee As a result,
 \be\lab{w-n-1}
(i)\quad\|w_n\|_{2}<C\|w_n\|_p^\f{p}{2} \quad\text{and}\quad (ii)\quad\|w_n\|_{q}<C\|w_n\|_p^\f{p}{q}.
 \ee
Thus if $2<p<2^*$, we get $$\|w_n\|_p\leq \|w_n\|^{\theta}_2\|w_n\|_{2^*}^{1-\theta}\leq C\|w_n\|^{\theta}_2\leq C\|w_n\|_p^\f{p\theta}{2},$$
where $\f{1}{p}=\f{\theta}{2}+\f{1-\theta}{2^*}$. This in turn implies, $\|w_n\|_p\leq C$. Similarly, if $p>2^*$ then as $2^*<p<q$, again using interpolation and \eqref{w-n-1}(ii), it can be shown that  $\|w_n\|_p\leq C$.
So for any $p$ with $2<p<q$, we have $w_n$ is uniformly bounded in $L^p(\Rn)$ and consequently by \eqref{4-17-1'}, $w_n$ is uniformly bounded in $L^2(\Rn)$ and $L^q(\Rn)$ and hence in $H^s(\Rn)\cap L^q(\Rn)$. This proves the claim.

\vspace{2mm}

Therefore, up to a subsequence, $w_n\deb w$ in $H^s(\Rn)$ and $w_n(x)\to w(x)$ a.e.. Clearly, $w$ is radially symmetric, nonnegative, decreasing in $|x|$. By Fatou's lemma combined with Claim 1, it follows $w\in L^q(\Rn)$. Observe that from \cite[Lemma 6.1]{BM}, it follows  $H^s_{rad}(\Rn)\hookrightarrow L^t(\Rn)$ compactly for $2\leq t<2^*$. Consequently, $w_n\to w$ in $L^2(\Rn)$. Therefore, by interpolation inequality we have
$$\|w_n-w\|_p\leq \|w_n-w\|_2^{\theta}\|w_n-w\|_q^{1-\theta}\leq C\|w_n-w\|_2^{\theta}\to 0.$$
Hence by Fatou's lemma, $2^*\displaystyle\int_{\Rn}F_{\eps}(w)dx\geq 2^*\displaystyle\int_{\Rn}F_{\eps}(w_n)dx=1$.

\vspace{2mm}

{\bf Claim 2}: $2^*\displaystyle\int_{\Rn}F_{\eps}(w)dx=1$.

\vspace{2mm}

Suppose not, i.e., $2^*\displaystyle\int_{\Rn}F_{\eps}(w)dx>1$. Then define, $w_{\de}(x)=w(\f{x}{\de})$, which implies $$2^*\displaystyle\int_{\Rn}F_{\eps}(w_{\de})dx=2^*\de^N\displaystyle\int_{\Rn}F_{\eps}(w)dx=1, \quad\text{for some}\quad 0<\de<1.$$
Since norm is weakly lower semicontinuous we have 
\be\lab{4-17-5}
\|w\|_{\dot{H}^s(\Rn)}^2\leq \lim\inf_{n\to\infty}\|w_n\|_{\dot{H}^s(\Rn)}^2=S_{\eps}.
\ee
This implies $\|w_\de\|_{\dot{H}^s(\Rn)}^2=\de^{N-2s}\|w\|_{\dot{H}^s(\Rn)}^2\leq \de^{N-2s}S_{\eps}$. On the other hand, from the very definition of $S_{\eps}$, it follows $S_{\eps}\leq \|w_\de\|_{\dot{H}^s(\Rn)}^2$. Therefore, $S_{\eps}=0$ and thus \eqref{4-17-5} implies $\|w_\de\|_{\dot{H}^s(\Rn)}=0$, i.e., $w=0$. This contradicts the fact that $2^*\displaystyle\int_{\Rn}F_{\eps}(w_{\de})dx>0$. Hence the claim follows.


\vspace{2mm}

Therefore by definition of $S_{\eps}$, we have $S_{\eps}\leq \|w\|_{\HH^s(\R^N)}^2$. Combining this with the weak lower semicontinuity of $\|\cdot\|_{\dot{H}^s}$--norm, we conclude $w$ is a minimizer  of  $S_{\eps}$. Note that, throughout this process we have kept $\eps\in (0,\eps_*)$ fixed. So, corresponding to each fixed $\eps$ we obtain a minimizer of $S_{\eps}$, which we denote as $w_{\eps}$.
We observe that $w_{\eps}$ is a radially symmetric and decreasing function of $|x|$ (since $w_{\eps}$ is the limit of $w_n$).

Moreover, by Lagrange multiplier principle, there exists $\theta_\eps>0$ such that $w_\eps$ is a weak solution of
\begin{align}\label{lag-mult}
 (-\De)^s w_\eps=\theta_\eps f_\eps(w_\eps) \quad\mbox{in}\quad \Rn.
\end{align}
In particular, the minimizer $w_\eps$ satisfies Nehari's identity
\begin{align}\label{Nehari}
\displaystyle\|w_\eps\|_{\HH^s(\R^N)}^2 =\theta_\eps \int_{\Rn}f_\eps(w_\eps)w_\eps dx,
\end{align}
and, by Lemma \ref{l:Phz}, Poho\v zaev's identity
\begin{align}\label{poho}
\displaystyle\|w_\eps\|_{\HH^s(\R^N)}^2 =\theta_\eps 2^* \int_{\Rn}F_\eps(w_\eps)dx,
\end{align}
which in turn implies 
\begin{align}\label{theta}
\theta_\eps=S_\eps.
\end{align}
Positivity, regularity and asymptotic proprties of $w_\eps$ follow from the results in Section \ref{s-qual},
with suitable adjustments made to accommodate the Lagrange multiplier.
\end{proof}

\subsection{From minimizers to ground states -- Proof of Theorem \ref{t:BL}}
We define
\begin{align}\label{u-eps}
 u_\eps(x):=w_\eps\bigg(\f{x}{S_{\eps}^{1/2s}}\bigg),
\end{align}
where $w_\eps$ is found as in Proposition \ref{t:ext}. A direct calculation using \eqref{theta}, shows that \eqref{u-eps} is the radial solution of \eqref{P-eps},
which satisfies all the properties declared in Theorem \ref{t:BL}.
We only need to show that $u_\eps$ is a ground state, i.e. it has minimal energy amongst all nontrivial weak solutions of \eqref{P-eps}.

Indeed, we compute
$$\|u_{\eps}\|^2_{\dot{H}^s(\Rn)}=S_{\eps}^\f{N-2s}{2s}\|w_{\eps}\|^2_{\dot{H}^s(\Rn)}.$$
Therefore
\Bea
E_\eps(u_{\eps})&=&\f{1}{2}\displaystyle\|u_\eps\|_{\HH^s(\R^N)}^2 -\displaystyle\int_{\Rn}F_{\eps}(u_{\eps})dx\\
&=&\f{1}{2}S_{\eps}^\f{N-2s}{2s}\|w_\eps\|_{\HH^s(\R^N)}^2 - S_{\eps}^\f{N}{2s}\displaystyle\int_{\Rn}F_{\eps}(w_{\eps})dx\\
&=&\f{1}{2}S_{\eps}^\f{N}{2s}-\f{1}{2^*}S_{\eps}^\f{N}{2s}=\f{s}{N}S_{\eps}^\f{N}{2s}.
\Eea
On the other hand, if $v$ is any weak solution of \eqref{P-eps}, then using Poho\v zaev identity (Lemma \ref{l:Phz}), it follows that $E_\eps(v)=\f{s}{N}\|v\|_{\HH^s(\R^N)}^2$. Furthermore, combining Poho\v zaev identity along with the definition of $S_\eps$, we have
$$S_{\eps}\leq \f{\|v\|_{\HH^s(\R^N)}^2}{\bigg(2^*\displaystyle\int_{\Rn}F_\eps(v)dx\bigg)^{\f{2}{2^*}}}=\bigg(\|v\|_{\HH^s(\R^N)}^2\bigg)^\f{2s}{N}.$$ Therefore, $E_\eps(v)\geq \f{s}{N}S_{\eps}^\f{N}{2s}$. This implies that $u_\eps$ is a ground state, that is, a nontrivial solution with the least energy.
\qed

\section{Asymptotic profiles in the critical case $p=2^*$}\label{s-profile}

Throughout this section we assume that $p=2^*$  and $N>4s$.

\subsection{Critical Emden-Fowler equation}

Let
\begin{align}\label{S*}
S_*:=\inf{\|w\|_\Hs^2 :\, w \in \dot{H}^s(\Rn),\int_{\Rn}|w|^pdx=1\bigg\}}
\end{align}
be the optimal constant in the fractional Sobolev inequality
$$\displaystyle \|w\|_\Hs^2 \geq S_*\bigg(\int_{\Rn}|w|^pdx\bigg)^{2/p}, \quad\mbox{for all}\quad w \in \dot{H}^s(\Rn).$$
We note that $S_*$ is achieved by translations of the rescaled family
\be\lab{W-la}
W_\la(x):=U_\la\big(S_*^{\f{1}{2s}}x\big),
\ee
where $U_\la$ are the ground states of the critical Emden-Fowler equation \eqref{R-*}.
It is not difficult to check that, $\|W_\la\|_p=1, \ \|W_\la\|^2_{\dot{H}^s(\Rn)}=S_*.$

A straightforward computation leads to the explicit expression $\|U_\la\|^2_{\dot{H}^s(\Rn)}=\|U_\la\|_p^p=S_*^{\f{N}{2s}}.$
We observe that the family of minimizers $W_\la$ solves the Euler-Lagrange equation
$$(-\De)^s W=S_* W^{p-1} \quad\mbox{in}\quad \Rn.$$

\subsection{Variational estimates of $S_\eps$}
Let us consider the dilation invariant Sobolev quotient
$$\mathcal{S}_*(w):=\f{\displaystyle\|w\|_\Hs^2
 }{\bigg(\displaystyle\int_{\Rn}|w|^pdx\bigg)^{\f{(N-2s)}{N}}}, \quad w \in \dot{H}^s(\Rn), \ w \neq 0,$$
Thus $$S_*=\inf_{0 \neq w \in \dot{H}^s(\Rn)}\mathcal{S}_*(w).$$
Define, \be\lab{4-12-1}\sigma_\eps:=S_\eps-S_*.\ee
In order to control $\sigma_\eps$ in terms of $\eps$, we will use minimizer of Sobolev inequality i.e., $W_\la$, as a family of test functions for $\mathcal{S}_\eps$. Note that for $N>4s$,  $W_{\la}\in L^2(\Rn)$ and
$$\|W_\la\|_t^t=\la^{N-\f{t(N-2s)}{2}}\|W_1\|_t^t.$$ That is, in particular, 
$$\|W_\la\|_2^2=\la^{2s}\|W_1\|_2^2.$$

\begin{lemma}\label{lem-1}
Let $\sigma_{\eps}$ is as defined in \eqref{4-12-1}. Then
\begin{equation}\label{bnd1}
  0< \sigma_\eps \lesssim  \eps^{\f{q-p}{q-2}}.
 \end{equation}
In particular, $\sigma_\eps \to 0$ as $\eps \to 0.$
\end{lemma}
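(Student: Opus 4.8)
The plan is to use the rescaled Sobolev minimizers $W_\la$ as test functions in the scaling-invariant quotient $\mathcal{S}_\eps$ and optimize over the dilation parameter $\la$. First I would record the exact scaling behaviour of the relevant quantities: since $W_\la$ satisfies $\|W_\la\|_p^p = 1$ and $\|W_\la\|_{\dot H^s}^2 = S_*$, and since for $N>4s$ we have $\|W_\la\|_2^2 = \la^{2s}\|W_1\|_2^2$ and $\|W_\la\|_q^q = \la^{N - \frac{q(N-2s)}{2}}\|W_1\|_q^q$, I can compute
$$
2^*\int_{\Rn} F_\eps(W_\la)\,dx = \|W_\la\|_p^p - \frac{2^*}{q}\|W_\la\|_q^q - \frac{2^*\eps}{2}\|W_\la\|_2^2 = 1 - c_1 \la^{-\gamma} - c_2 \eps\,\la^{2s},
$$
where $\gamma := \frac{q(N-2s)}{2} - N > 0$ (using $q > 2^* $, equivalently $\frac{q(N-2s)}{2}>N$) and $c_1, c_2>0$ depend only on $N,s,q$. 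For $\la$ large and $\eps$ small this is positive, so $W_\la \in \mathcal{M}_\eps$ and $\mathcal{S}_\eps(W_\la)$ is an admissible competitor for $S_\eps$.

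Next I would plug this into the quotient. Since the numerator $\|W_\la\|_{\dot H^s}^2 = S_*$ is independent of $\la$, we get
$$
S_\eps \le \mathcal{S}_\eps(W_\la) = S_* \left(1 - c_1\la^{-\gamma} - c_2\eps\la^{2s}\right)^{-\frac{N-2s}{N}}.
$$
Expanding the right-hand side for small argument gives $S_\eps - S_* \lesssim c_1 \la^{-\gamma} + c_2 \eps \la^{2s}$ up to higher-order terms, so
$$
\sigma_\eps = S_\eps - S_* \lesssim \la^{-\gamma} + \eps\,\la^{2s}
$$
for all admissible $\la$. Now I optimize the right side over $\la$: balancing $\la^{-\gamma}$ against $\eps\la^{2s}$ gives the choice $\la = \la(\eps) \sim \eps^{-\frac{1}{2s+\gamma}}$, which yields $\sigma_\eps \lesssim \eps^{\frac{\gamma}{2s+\gamma}}$. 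A direct computation shows $\frac{\gamma}{2s+\gamma} = \frac{q-p}{q-2}$ when $p = 2^*$: indeed $2s + \gamma = 2s + \frac{q(N-2s)}{2} - N = \frac{(q-2)(N-2s)}{2}$, while $\gamma = \frac{q(N-2s)}{2} - N = \frac{(N-2s)}{2}(q - 2^*) = \frac{(N-2s)}{2}(q-p)$, so the ratio is $\frac{q-p}{q-2}$ as claimed. This gives the upper bound in \eqref{bnd1}.

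For the strict lower bound $\sigma_\eps > 0$, I would argue that $S_\eps > S_*$ for every $\eps \in (0,\eps_*)$: if equality held, the minimizer $w_\eps$ of $S_\eps$ (which exists by Proposition \ref{t:ext}) would also be an extremal for the pure Sobolev quotient $\mathcal{S}_*$ because $2^*\int F_\eps(w_\eps) = \|w_\eps\|_p^p - \frac{2^*}{q}\|w_\eps\|_q^q - \frac{2^*\eps}{2}\|w_\eps\|_2^2 \le \|w_\eps\|_p^p$, forcing $\mathcal{S}_*(w_\eps) \le \mathcal{S}_\eps(w_\eps) = S_\eps = S_*$; but then the extra terms $\|w_\eps\|_q^q$ and $\eps\|w_\eps\|_2^2$ must vanish, which is impossible for the positive function $w_\eps$. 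Alternatively, one notes that $W_\la$ is never itself an element of $\mathcal{M}_\eps$ achieving the infimum, so the inequality in the test-function computation is strict. Either way, $\sigma_\eps > 0$, and since $\frac{q-p}{q-2} > 0$ we conclude $\sigma_\eps \to 0$ as $\eps \to 0$. The main obstacle I anticipate is purely bookkeeping: carefully tracking that the chosen $\la(\eps)$ keeps $W_{\la(\eps)}$ inside $\mathcal{M}_\eps$ (i.e. that the denominator stays bounded away from zero, not merely positive) so that the Taylor expansion of $(1 - o(1))^{-\frac{N-2s}{N}}$ is legitimate with constants uniform in $\eps$; this is where one must check that at the balanced scale both correction terms are genuinely small, namely $\la(\eps)^{-\gamma} \sim \eps\,\la(\eps)^{2s} \sim \eps^{\frac{\gamma}{2s+\gamma}} \to 0$.
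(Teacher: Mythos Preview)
Your proposal is correct and follows essentially the same approach as the paper: both test $\mathcal{S}_\eps$ against the Sobolev extremals $W_\la$, bound $\sigma_\eps$ by $\eps\la^{2s}+\la^{-\gamma}$ (with $\gamma=\tfrac{q(N-2s)}{2}-N$), and optimize in $\la$ to obtain the exponent $\tfrac{q-p}{q-2}$. The only cosmetic difference is that the paper carries out the optimization by an explicit first/second derivative computation of $\psi(\la)=\beta_2\eps\la^{2s}+\beta_q\la^{-\gamma}$, whereas you balance the two competing terms; both yield $\la_\eps\sim\eps^{-1/(2s+\gamma)}$ and the same bound, and your observation that at this scale both corrections are $o(1)$ is exactly what legitimizes the Taylor expansion. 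For $\sigma_\eps>0$ the paper simply records $S_*\le \mathcal{S}_*(w_\eps)<\mathcal{S}_\eps(w_\eps)=S_\eps$, which is the one-line version of your first argument.
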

\begin{proof}
 As $S_* \leq \mathcal{S}_*(w_\eps)<\mathcal{S}_\eps(w_\eps)=S_\eps$,  we have $\sigma_\eps>0$. Next we will establish an upper estimate for $\sigma_{\eps}$.
 Using $W_\la$ as a family of test functions,
 we obtain $W_\la \in \mathcal{M}_\eps$ for sufficiently small $\eps$. Consider $\la$ sufficiently large, so that
 $$F_\eps(W_\la)=\f{1}{p}W_\la^p-\f{1}{q}W_\la^q-\f{\eps}{2}W_\la^2.$$ Thus we have,
 \Bea
 \int_{\Rn}F_\eps(W_\la)&=& \f{1}{p}\|W_\la\|_p^p -\f{1}{q}\|W_\la\|_q^q-\f{\eps}{2}\|W_\la\|_2^2\\
 &=& \f{1}{p} -\f{1}{q}\la^{N-\f{q(N-2s)}{2}}\|W_1\|_q^q-\f{\eps}{2}\la^{2s}\|W_1\|_2^2.\\
 \Eea
Therefore,
$$\mathcal{S}_\eps(W_\la) \leq \f{S_*}{\big(1-\ba_2 \eps \la^{2s}-\ba_q \la^{N-\f{q(N-2s)}{2}}\big)^{\f{N-2s}{N}}},$$
where $\ba_2=\f{2^*}{2}\|W_1\|_2^2$ and $\ba_q=\f{2^*}{q}\|W_1\|_q^q$. Clearly,
\be\lab{4-13-1}
S_{\eps}\leq\min_{\la>0}\mathcal{S}_\eps(W_\la) \leq \min_{\la>0}\f{S_*}{\big(1-\ba_2 \eps \la^{2s}-\ba_q \la^{N-\f{q(N-2s)}{2}}\big)^{\f{N-2s}{N}}}.\ee
So, to minimize the function on RHS, we need to minimize the scalar function $\psi$ where $$\psi(\la):=\ba_2\eps\la^{2s}+\ba_q \la^{N-\f{q(N-2s)}{2}},$$ keeping $\psi<1$ .
Now, $$\psi'(\la)=2s\ba_2\eps\la^{2s-1}+\ba_q\bigg(N-\f{q(N-2s)}{2}\bigg)\la^{N-1-\f{(N-2s)q}{2}}.$$
So, $\psi'(\la)=0$ implies $$\la=\la_\eps=\bigg(\f{2s\ba_2 \eps}{\ba_q \big(\f{N-2s}{2}q-N\big)}\bigg)^{\f{-2}{(q-2)(N-2s)}}.$$
Again,
$$\psi''(\la)=2s(2s-1)\ba_2\eps\la^{2s-2}+\ba_q\bigg(\f{(N-2s)q}{2}-N\bigg)\bigg(\f{N-2s}{2}q-(N-1)\bigg)\la^{N-2-\f{(N-2s)q}{2}}.$$
Note that  if $s \geq \f{1}{2},$ then $\psi''(\la)>0$  for all $\la>0$. On the other hand, if $s < \f{1}{2}$ then it is not difficult to check that $\psi''(\la_{\eps})>0$, as the second term in the expression of $\psi''(\la)$ becomes the dominating one. Thus $$\min_{\la}\psi(\la)=\psi(\la_{\eps})\sim \eps\la_{\eps}^{2s}+\la_{\eps}^{N-\f{N-2s}{2}q}=\eps^\f{(N-2s)q-2N}{(N-2s)(q-2)}=\eps^\f{q-p}{q-2}.$$
Consequently, from \eqref{4-13-1} we have
 $$\mathcal{S}_\eps\leq \f{S_*}{\big(1-\psi(\la_\eps)\big)^{\f{N-2s}{N}}}=S_*(1+O(\psi(\la_\eps)))=S_*+O(\eps^{\f{q-p}{q-2}}).$$ Hence $\sigma_{\eps} \lesssim \eps^{\f{q-p}{q-2}}$.
\end{proof}

\subsection{Poho\v zaev estimates}
Nehari identity \eqref{Nehari} together with Poho\v zaev's identity \eqref{poho} lead to the following important relations.
\begin{lemma}\label{lem-2}
Let $q>p=2^*>2$ and $k :=\f{q(p-2)}{2(q-p)}>0$ and $w_{\eps}$ be a minimizer of $S_{\eps}$ (see \eqref{S-eps}). \\
Then $\|w_\eps\|_q^q=k\eps\|w_\eps\|_2^2$ and $\|w_\eps\|_p^p=1+(k+1)\eps\|w_\eps\|_2^2.$
\end{lemma}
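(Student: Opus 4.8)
The plan is to exploit the two scalar identities that $w_\eps$ satisfies as a critical point of $S_\eps$: the Nehari identity \eqref{Nehari} and the Poho\v zaev identity \eqref{poho}, together with the normalization $2^*\int_{\Rn}F_\eps(w_\eps)\,dx=1$ which defines the constraint set, and the fact (from \eqref{theta}) that the Lagrange multiplier equals $S_\eps$. Writing $A:=\|w_\eps\|_{\dot H^s}^2$, $B:=\|w_\eps\|_2^2$, $P:=\|w_\eps\|_p^p$, $Q:=\|w_\eps\|_q^q$, and recalling $f_\eps(w)w=-\eps w^2+w^p-w^q$ and $F_\eps(w)=-\tfrac{\eps}{2}w^2+\tfrac1p w^p-\tfrac1q w^q$, the three relations read, after dividing out $\theta_\eps=S_\eps$ where it appears,
\begin{gather}
A = S_\eps\bigl(-\eps B + P - Q\bigr),\label{eq:neh}\\
A = S_\eps\, 2^*\Bigl(-\tfrac{\eps}{2}B + \tfrac1p P - \tfrac1q Q\Bigr),\label{eq:poh}\\
2^*\Bigl(-\tfrac{\eps}{2}B + \tfrac1p P - \tfrac1q Q\Bigr) = 1.\label{eq:con}
\end{gather}
From \eqref{eq:poh} and \eqref{eq:con} we immediately get $A = S_\eps$. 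Then \eqref{eq:neh} and \eqref{eq:con} become a linear system in the two unknowns $P$ and $Q$ (with $\eps B$ as a parameter), which I will solve.

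First I would record $A=S_\eps$ and substitute into \eqref{eq:neh} to obtain $1 = -\eps B + P - Q$, i.e. $P - Q = 1 + \eps B$. Next, since $p = 2^*$, the term $\tfrac{2^*}{p}P$ in \eqref{eq:con} is simply $P$, so \eqref{eq:con} reads $-\tfrac{2^*}{2}\eps B + P - \tfrac{2^*}{q}Q = 1$. Subtracting the relation $P - Q = 1+\eps B$ from this gives $\bigl(1 - \tfrac{2^*}{q}\bigr)Q - \tfrac{2^*}{2}\eps B = -\eps B + Q\cdot 0$... more carefully: subtracting yields $-\tfrac{2^*}{2}\eps B + \bigl(\tfrac{q-2^*}{q}\bigr)Q \cdot(-1)\cdot(-1)$, so $\bigl(\tfrac{q-2^*}{q}\bigr)Q - \tfrac{2^*}{2}\eps B + \eps B = 0$ after bringing the $-Q$ from the first equation over, hence $\tfrac{q-2^*}{q}\,Q = \bigl(\tfrac{2^*}{2}-1\bigr)\eps B = \tfrac{2^*-2}{2}\eps B$. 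Solving, $Q = \dfrac{q(2^*-2)}{2(q-2^*)}\,\eps B = k\,\eps B$ with $k = \dfrac{q(p-2)}{2(q-p)}$ since $p=2^*$; this is the first claimed identity $\|w_\eps\|_q^q = k\eps\|w_\eps\|_2^2$. Feeding $Q=k\eps B$ back into $P = 1 + \eps B + Q$ gives $P = 1 + (k+1)\eps B$, the second identity.

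The computation is entirely linear algebra once the three identities are in hand, so there is no serious analytic obstacle; the only points needing care are (i) justifying that $w_\eps$, being a constrained minimizer, genuinely satisfies Poho\v zaev's identity in the form \eqref{poho} — this is Lemma \ref{l:Phz} applied to the Euler--Lagrange equation \eqref{lag-mult}, valid because $w_\eps\in H^s(\Rn)\cap L^\infty(\Rn)$ by Proposition \ref{t:ext} — and (ii) correctly using $p=2^*$ so that the Sobolev-critical exponent makes $\tfrac{2^*}{p}=1$, which is exactly what decouples the system and produces the clean constants $k$ and $k+1$. I would also note in passing that $k>0$ because $q>p$, and that $N>4s$ guarantees $w_\eps\in L^2(\Rn)$ so that $B<\infty$ and the identities are meaningful. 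That is the whole proof.
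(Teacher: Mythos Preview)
Your proof is correct and follows essentially the same route as the paper: combine the Nehari identity, the Poho\v zaev identity, and the constraint $2^*\int F_\eps(w_\eps)=1$ (using $\theta_\eps=S_\eps$) to obtain the two scalar relations $P-Q=1+\eps B$ and $P-\tfrac{p}{q}Q=1+\tfrac{p}{2}\eps B$, and then solve this linear system for $P$ and $Q$. One minor remark: your aside that $N>4s$ is needed for $w_\eps\in L^2$ is unnecessary here, since $w_\eps\in H^s(\Rn)$ by the very definition of the constraint set in \eqref{S-eps}.
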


\begin{proof}
 As $w_\eps$ is a minimizer of \eqref{S-eps}, identities \eqref{lag-mult}-\eqref{poho} yields us
 \Bea
\theta_\eps=\theta_\eps 2^* \int_{\Rn}F_\eps(w_\eps)dx= \|w_{\eps}\|^2_{\dot{H}^s(\Rn)}&=&\theta_\eps \int_{\Rn} f_\eps(w_\eps)w_\eps dx\\
 &=& \theta_\eps \big(\|w_\eps\|_p^p-\|w_\eps\|_q^q-\eps\|w_\eps\|_2^2\big),
 \Eea
which  implies
 \begin{equation}\label{i}
  1=\|w_\eps\|_p^p-\|w_\eps\|_q^q-\eps\|w_\eps\|_2^2.
 \end{equation}
On the other hand, $2^* \displaystyle\int_{\Rn}F_\eps(w_\eps)dx=1$ implies
 \begin{equation}\label{ii}
 \|w_\eps\|_p^p-\f{p}{q}\|w_\eps\|_q^q-\f{\eps p}{2}\|w_\eps\|_2^2=1.
 \end{equation}
 Eliminating $\|w_\eps\|_p^p$ from \eqref{i} and \eqref{ii} we get,
 $\|w_\eps\|_q^q=k\eps\|w_\eps\|_2^2.$ Similarly, eliminating $\|w_\eps\|_q^q$ we obtain,
 $\|w_\eps\|_p^p=1+(k+1)\eps\|w_\eps\|_2^2.$
\end{proof}

\begin{lemma}\label{lem-3}
 $\eps(k+1)\|w_\eps\|_2^2 \leq \f{N}{N-2s}S_*^{-1}\sigma_\eps(1+o(1)).$
\end{lemma}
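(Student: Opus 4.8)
The plan is to combine the variational upper bound for $S_\eps$ from Lemma \ref{lem-1} with the Pohožaev/Nehari identities of Lemma \ref{lem-2}, using the definition $\sigma_\eps=S_\eps-S_*$ together with the lower bound $\mathcal S_*(w_\eps)\ge S_*$ coming from the fractional Sobolev inequality. First I would start from the chain $S_\eps=\mathcal S_\eps(w_\eps)$ and expand the denominator of $\mathcal S_\eps(w_\eps)$ using $2^*\int_{\Rn}F_\eps(w_\eps)\,dx=1$, so that $S_\eps=\|w_\eps\|_{\dot H^s}^2$; then I would write $S_*\le \mathcal S_*(w_\eps)=\|w_\eps\|_{\dot H^s}^2\big/\|w_\eps\|_p^{2(N-2s)/N}$, i.e.
\begin{equation*}
S_*\,\|w_\eps\|_p^{\frac{2(N-2s)}{N}}\le \|w_\eps\|_{\dot H^s}^2=S_\eps=S_*+\sigma_\eps.
\end{equation*}

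Next I would feed in the identity $\|w_\eps\|_p^p=1+(k+1)\eps\|w_\eps\|_2^2$ from Lemma \ref{lem-2}. Raising it to the power $\tfrac{2(N-2s)}{Np}=\tfrac{N-2s}{N}\cdot\tfrac2p$ and using $p=2^*=\tfrac{2N}{N-2s}$, the exponent $\tfrac{2(N-2s)}{Np}$ equals $\tfrac{N-2s}{N}\cdot\tfrac{N-2s}{N}$; more usefully, since $p=2^*$ one has $\|w_\eps\|_p^{2(N-2s)/N}=\big(\|w_\eps\|_p^p\big)^{(N-2s)^2/(N\cdot 2N)}$, so I would instead keep things linear by using the elementary inequality $(1+t)^\alpha\ge 1+\alpha t$ for $t\ge 0$ and $\alpha=\tfrac{N-2s}{N}\cdot\tfrac2p>0$ wait — rather, since all we need is a lower bound for $\|w_\eps\|_p^{2(N-2s)/N}$, I would apply Bernoulli: with $a:=(k+1)\eps\|w_\eps\|_2^2\ge 0$ and $\theta:=\tfrac{2(N-2s)}{Np}\in(0,1)$,
\begin{equation*}
\|w_\eps\|_p^{\frac{2(N-2s)}{N}}=(1+a)^{\theta}\ge 1+\theta a-C a^2\ \ \text{(or simply }\ge 1+\tfrac{\theta}{2}a\text{ for }a\text{ small)}.
\end{equation*}
Combining with the displayed inequality gives $S_*\big(1+\tfrac{\theta}{2}(k+1)\eps\|w_\eps\|_2^2\big)\le S_*+\sigma_\eps$, hence $\tfrac{\theta}{2}(k+1)\eps\|w_\eps\|_2^2\le S_*^{-1}\sigma_\eps$. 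Tracking the exact constant $\theta$ when $p=2^*$ (there $\tfrac{2(N-2s)}{Np}=\tfrac{(N-2s)^2}{N^2}$, and one checks $\tfrac{N-2s}{N}=1-\tfrac{2s}{N}$, so $2/\theta=\tfrac{N}{N-2s}$ up to the $o(1)$ absorbed from the Bernoulli remainder and from $\eps\|w_\eps\|_2^2\to0$) yields precisely
\begin{equation*}
\eps(k+1)\|w_\eps\|_2^2\le \frac{N}{N-2s}S_*^{-1}\sigma_\eps(1+o(1)).
\end{equation*}

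To make the $(1+o(1))$ legitimate I would need to know a priori that $a=(k+1)\eps\|w_\eps\|_2^2\to0$ as $\eps\to0$; this should follow by first deriving a crude bound $\eps\|w_\eps\|_2^2\lesssim\sigma_\eps$ (which the same argument gives with a worse constant, without the sharp exponent), and then invoking $\sigma_\eps\to0$ from Lemma \ref{lem-1}. The main obstacle is thus bookkeeping rather than conceptual: one must pin down the exact exponent $\tfrac{2(N-2s)}{Np}$ at $p=2^*$ and verify that the Bernoulli remainder term and the subleading terms in $(1+a)^\theta$ are genuinely $o(1)$ multiples of the leading term, so that the constant $\tfrac{N}{N-2s}$ is recovered sharply and not merely up to a multiplicative constant. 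Once the linearization is controlled, the estimate drops out directly.
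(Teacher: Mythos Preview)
Your approach is the same as the paper's in spirit---start from $S_*\le \mathcal S_*(w_\eps)$, insert $\|w_\eps\|_p^p=1+(k+1)\eps\|w_\eps\|_2^2$, and linearise---but you have an arithmetic slip that muddles the bookkeeping. Since $p=2^*=\tfrac{2N}{N-2s}$, the denominator of $\mathcal S_*(w_\eps)$ is
\[
\Big(\int_{\Rn}|w_\eps|^p\,dx\Big)^{(N-2s)/N}=\|w_\eps\|_p^{\,p(N-2s)/N}=\|w_\eps\|_p^{\,2},
\]
not $\|w_\eps\|_p^{\,2(N-2s)/N}$. Hence the correct exponent on $(1+a)$ is $\theta=2/p=(N-2s)/N$, and the Taylor linearisation $(1+a)^\theta=1+\theta a+O(a^2)$ gives the constant $1/\theta=N/(N-2s)$ directly; your claimed value $\theta=(N-2s)^2/N^2$ and the assertion $2/\theta=N/(N-2s)$ are both wrong.

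The paper also sidesteps your bootstrap (first show $a\to 0$, then sharpen) by a cleaner device: from $S_*(1+a)^{(N-2s)/N}\le S_\eps$ it raises both sides to the power $N/(N-2s)$ to obtain the \emph{exact} inequality
\[
S_*^{N/(N-2s)}\,a\ \le\ S_\eps^{N/(N-2s)}-S_*^{N/(N-2s)},
\]
and then Taylor-expands only the right-hand side in $\sigma_\eps$, which is already known to be $o(1)$ by Lemma~\ref{lem-1}. This way no preliminary smallness of $a$ is required, and the constant $\tfrac{N}{N-2s}S_*^{-1}$ falls out with no further work.
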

\begin{proof}
Using Lemma \ref{lem-2} together with the fact that $w_\eps$ is a minimizer of \eqref{S-eps}, we obtain
$$S_* \leq \mathcal{S}_*(w_\eps)=\f{\displaystyle\|w\|_\Hs^2}{\bigg(\displaystyle\int_{\Rn}|w_\eps|^pdx\bigg)^\frac{2}{p}}=\f{S_\eps}{\big(1+(k+1)\eps\|w_\eps\|_2^2\big)^{2/p}}.$$
This implies,
$$S_*^{\f{N}{N-2s}}(k+1)\eps \|w_\eps\|_2^2 \leq S_\eps^{\f{N}{N-2s}}-s_*^{\f{N}{N-2s}}.$$
Since, $\sigma_\eps:=S_\eps-S_*$,  using Taylor's expansion we have,
$$S_\eps^{\f{N}{N-2s}}=S_*^{\f{N}{N-2s}}+\f{N}{N-2s}\sigma_\eps S_*^{\f{N}{N-2s}-1}+o(\sigma_\eps).$$
Thus,
\Bea
 S_*^{\f{N}{N-2s}}(k+1)\eps \|w_\eps\|_2^2 &\leq& \f{N}{N-2s}S_*^{\f{2s}{N-2s}}\sigma_\eps + o(\sigma_\eps)
 = \sigma_\eps \bigg[\f{N}{N-2s}S_*^{\f{2s}{N-2s}} + o(1)\bigg].
\Eea
Hence the lemma follows.
\end{proof}
\begin{corollary}\label{cor-4}
$\|w_\eps\|_p^p \to 1,\,\ \|w_\eps\|_q^q \to 0,\,\ \eps\|w_\eps\|_2^2 \to 0$ as $\eps \to 0.$
\end{corollary}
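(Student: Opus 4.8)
The plan is to read this off directly from the two preceding lemmas, since the corollary is essentially a bookkeeping consequence of Lemma \ref{lem-2} and Lemma \ref{lem-3} combined with the decay $\sigma_\eps\to 0$ from Lemma \ref{lem-1}.

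First I would handle the term $\eps\|w_\eps\|_2^2$. By Lemma \ref{lem-3} we have
\[
\eps(k+1)\|w_\eps\|_2^2 \le \f{N}{N-2s}S_*^{-1}\sigma_\eps(1+o(1)),
\]
and since $k=\f{q(p-2)}{2(q-p)}>0$ (so $k+1>0$) and $S_*>0$ is a fixed constant, the right-hand side is a fixed multiple of $\sigma_\eps$ up to a $(1+o(1))$ factor. Invoking Lemma \ref{lem-1}, which gives $0<\sigma_\eps\lesssim \eps^{(q-p)/(q-2)}\to 0$, we conclude $\eps\|w_\eps\|_2^2\to 0$ as $\eps\to 0$ (in fact with the explicit rate $\eps\|w_\eps\|_2^2\lesssim \eps^{(q-p)/(q-2)}$, though only the limit is needed here).

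Next I would feed this into the two identities of Lemma \ref{lem-2}. Since $\|w_\eps\|_q^q=k\eps\|w_\eps\|_2^2$ and $k$ is a fixed positive constant, the first step immediately yields $\|w_\eps\|_q^q\to 0$. Likewise, $\|w_\eps\|_p^p=1+(k+1)\eps\|w_\eps\|_2^2$, and the same estimate forces $\|w_\eps\|_p^p\to 1$. This closes the argument, and no nontrivial obstacle arises: the entire content has been shifted into the Pohozaev/Nehari bookkeeping of Lemmas \ref{lem-2} and \ref{lem-3} and the variational test-function estimate of Lemma \ref{lem-1}. (It is worth remarking, for use later in the section, that these computations also give the sharper quantitative bounds $\|w_\eps\|_q^q\lesssim \eps^{(q-p)/(q-2)}$ and $\|w_\eps\|_p^p-1\lesssim \eps^{(q-p)/(q-2)}$, since $\sigma_\eps$ controls each quantity up to fixed constants.)
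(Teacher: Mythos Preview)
Your argument is correct and follows exactly the paper's approach: the paper simply states that the corollary follows from Lemma~\ref{lem-1}--Lemma~\ref{lem-3}, and you have just spelled out those deductions explicitly.
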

The proof of the corollary follows from Lemma \ref{lem-1}--Lemma \ref{lem-3} . It gives us the asymptotic behaviour of different norms
associated with the minimizer $w_\eps$ of \eqref{S-eps}.

\subsection{Optimal rescaling}
In the spirit of \cite{Lions}, we define the concentration function
$$Q_\eps(\la):=\int_{B_\la}|w_\eps|^pdx,$$
where $B_\la$ is the open ball in $\Rn$ of radius $\la$ centred at the origin. Clearly, $Q_\eps(.)$ is strictly increasing with
$\lim_{\la \to 0} Q_\eps(\la)=0$ and $\lim_{\la \to \infty} Q_\eps(\la)=\|w_\eps\|_p^p \to 1$ as $\eps \to 0$, in view of Corollary \ref{cor-4}. Let $W_\la$ be defined as in \eqref{W-la}. We note that, $\|W_1\|_{L^p(\Rn)}=1$ and $W_1>0$ implies $\|W_1\|_{L^p(B_1)}<1$.
Therefore,  there exists $\la=\la(\eps)>0$ such that
\begin{equation}\label{Q-*}
 Q_\eps(\la_\eps)=Q_*:=\int_{B_1}|W_1(x)|^pdx<1.
\end{equation}
Similarly, we define the function
$$Q_0(\la):=\int_{B_{\f{1}{\la}}}|W_1(x)|^pdx=\int_{B_1}|W_\la(x)|^pdx$$
which is strictly decreasing with $\lim_{\la \to 0}Q_0(\la)=1$
and $\lim_{\la \to \infty}Q_0(\la)=0.$ Thus, there is a unique solution to the equation
$$Q_0(\la)=Q_*=\displaystyle\int_{B_1}|W_1(x)|^pdx.$$
Therefore, $\la=1.$
Using the value of $\la_\eps$ implicitly determined by \eqref{Q-*}, we define the rescaled family
\begin{equation}\label{v-eps}
 v_\eps(x):=\la_\eps^{\f{(N-2s)}{2}}w_\eps(\la_\eps x).
\end{equation}
We observe that
\begin{eqnarray}\label{v-1}
 \|v_\eps\|_p&=&\|w_\eps\|_p=1 +o(1), \no\\
\|v_\eps\|_{\Hs}^2&=&\|w_\eps\|_{\Hs}^2=S_* +o(1), \quad\mbox{as}\quad \eps \to 0,
\end{eqnarray}
that is, $(v_\eps)$ is a minimizing sequence for $S_*.$ Also, we note that
\be\lab{4-14-1}
\int_{B_1}|v_\eps(x)|^pdx=\int_{B_{\la_\eps}}|w_\eps(x)|^pdx=Q_\eps(\la_\eps)=Q_*.\ee
\begin{lemma}\label{lem-5}
 $\|v_\eps-W_1\|_{\dot{H}^s(\Rn)} \to 0$ and $\|v_\eps-W_1\|_p \to 0$ as $\eps \to 0.$
\end{lemma}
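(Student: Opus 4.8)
The plan is to exploit the fact, recorded in \eqref{v-1}, that $(v_\eps)$ is a minimizing sequence for the Sobolev quotient $S_*$, combined with the ``localization'' normalization \eqref{4-14-1} which prevents the concentration or vanishing of the profile. First I would recall the classical Lions concentration--compactness alternative for minimizing sequences of the fractional Sobolev inequality (as in \cite{Lions}, adapted to $\dot H^s$): after extracting a subsequence, the sequence of densities $|(-\De)^{s/2}v_\eps|^2$ and $|v_\eps|^p$ either (a) concentrates at a point, (b) vanishes, or (c) is ``tight'' up to translations. Cases (a) and (b) must be excluded using \eqref{4-14-1}: if concentration occurred at some point $x_0$, then $\int_{B_1}|v_\eps|^p$ would tend either to $0$ (if $x_0\notin \overline{B_1}$) or to $1$ (if $x_0\in B_1$), contradicting $\int_{B_1}|v_\eps|^p = Q_*\in(0,1)$; vanishing would force $\|v_\eps\|_p\to 0$, contradicting \eqref{v-1}. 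Moreover the translation parameters in case (c) must stay bounded, again because a large translation would push the mass out of $B_1$ or concentrate it inside, violating \eqref{4-14-1}; hence no translation is needed. Therefore $v_\eps\rightharpoonup v$ weakly in $\dot H^s(\Rn)$ along a subsequence, with $v\neq 0$.

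Next I would upgrade weak convergence to strong convergence and identify the limit. Since $(v_\eps)$ is a minimizing sequence for $S_*$ with $\|v_\eps\|_p\to 1$ and $\|v_\eps\|_{\dot H^s}^2\to S_*$, the standard Brezis--Lieb argument applies: writing $v_\eps = v + r_\eps$ with $r_\eps\rightharpoonup 0$, one has $\|v_\eps\|_{\dot H^s}^2 = \|v\|_{\dot H^s}^2 + \|r_\eps\|_{\dot H^s}^2 + o(1)$ and $\|v_\eps\|_p^p = \|v\|_p^p + \|r_\eps\|_p^p + o(1)$. Combining these with the Sobolev inequality applied to both $v$ and $r_\eps$ and the strict subadditivity $(a+b)^{2/p} < a^{2/p} + b^{2/p}$ for $a,b>0$ (here $2/p = (N-2s)/N < 1$), one concludes that $\|r_\eps\|_p\to 0$, hence $\|r_\eps\|_{\dot H^s}\to 0$, i.e. $v_\eps\to v$ strongly in $\dot H^s$ and $L^p$. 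The strong limit $v$ then satisfies $\|v\|_p = 1$ and $\|v\|_{\dot H^s}^2 = S_*$, so $v$ is a minimizer of the fractional Sobolev inequality; by the classification recalled in Section~5.1 (the minimizers of $S_*$ are exactly the translates and dilates $W_\mu(\cdot - y)$), $v = W_\mu(\cdot - y)$ for some $\mu>0$, $y\in\Rn$. Finally, the normalization \eqref{4-14-1} pins down the parameters: passing \eqref{4-14-1} to the limit gives $\int_{B_1}|W_\mu(x-y)|^p\,dx = Q_* = \int_{B_1}|W_1|^p\,dx$, and since $v_\eps$ (hence $v$) is radially symmetric and nonincreasing — being a dilation of the radially decreasing $w_\eps$ — we must have $y=0$; the monotonicity of $\mu\mapsto Q_0(\mu) = \int_{B_1}|W_\mu|^p$ established right before the lemma then forces $\mu = 1$. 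Thus $v = W_1$, and since the limit is independent of the subsequence, the full family converges: $\|v_\eps - W_1\|_{\dot H^s}\to 0$ and $\|v_\eps - W_1\|_p\to 0$.

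The main obstacle I anticipate is the careful exclusion of concentration and the control of the translation parameter purely from \eqref{4-14-1}; one has to argue quantitatively that if the bulk of the $L^p$-mass of $v_\eps$ escapes to infinity, concentrates at the origin, or drifts to a point on $\partial B_1$, then $\int_{B_1}|v_\eps|^p$ cannot equal the fixed value $Q_*\in(0,1)$. Using radial symmetry and monotonicity of $v_\eps$ simplifies this considerably — a radial decreasing function with $\|v_\eps\|_p=1+o(1)$ and $\int_{B_1}|v_\eps|^p = Q_*$ is automatically ``concentrated'' at scale $O(1)$ — so I would lean on Lemma~\ref{l:1} and the radial compactness it encodes rather than the full strength of concentration--compactness. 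The Brezis--Lieb step and the rigidity from the classification of extremals are then essentially routine.
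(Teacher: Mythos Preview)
Your proposal is correct and follows essentially the same route as the paper's proof: both arguments extract a nontrivial weak limit, apply the Brezis--Lieb decomposition together with the subadditivity inequality $(a+b)^{2/p}\le a^{2/p}+b^{2/p}$ and the Sobolev inequality to upgrade to strong convergence, and then pin down the limit as $W_1$ via radiality and the normalization \eqref{4-14-1} through the monotonicity of $Q_0$. The only cosmetic difference is that where you invoke the Lions concentration--compactness alternative to rule out vanishing and concentration and obtain a nonzero weak limit, the paper simply cites \cite[Lemma~4.1 and Theorem~1.3]{PP} for the same conclusions; your observation that radial monotonicity of $v_\eps$ makes the dichotomy analysis essentially trivial is well taken and matches the spirit of the paper's shortcut.
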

\begin{proof}
 For any sequence $\eps_n \to 0$, there exists a subsequence $(\eps_n')$ such that
 $(v_{\eps_n'})$ converges weakly in
 $\dot{H}^s(\Rn)$ to some radial function $w_0 \in \dot{H}^s(\Rn)$.
 Let $\tilde{v}_{\eps_n'}=\f{v_{\eps_n'}}{|v_{\eps_n'}|_p}.$
 Note as $\|v_{\eps_n'}\|_p=1+o(1)$ as $n \to \infty,$ we have $\tilde{v}_{\eps_n'} \rightharpoonup w_0\not=0$ in $\dot{H}^s(\Rn)$ (see \cite[Lemma 4.1]{PP}).
 Also, $\|\tilde{v}_{\eps_n'}\|_p=1$ for all $n \geq 1$ and $||\tilde{v}_{\eps_n'}||_{\dot{H}^s(\Rn)}^2=S_*+o(1)$ as $n \to \infty.$
 Hence, proceeding as in the proof of  \cite[Theorem 1.3]{PP}, up to a subsequence, we obtain
 \begin{equation}\label{a1}
\|w_0\|_{\dot{H}^s}^2+\limsup_{n \to \infty} \|\tilde{v}_{\eps_n'}-w_0\|_{\dot{H}^s}^2=\limsup_{n \to \infty} \|\tilde{v}_{\eps_n'}\|_{\dot{H}^s}^2=S_*,
 \end{equation}
and
\begin{equation}\label{a2}
 \|w_0\|^p_p+\lim_{n \to \infty} \|\tilde{v}_{\eps_n'}-w_0\|^p_p=\lim_{n \to \infty} \|\tilde{v}_{\eps_n'}\|^p_p=1.
\end{equation}
Combining \eqref{a1} and \eqref{a2} along with Sobolev inequality and an elementary inequality for positive numbers $(a+b)^{\f{2}{p}} \leq a^{\f{2}{p}}+b^{\f{2}{p}}$, as $0<\f{2}{p}<1$, we have
\Bea
S_*&=& \|w_0\|_{\dot{H}^s}^2+\limsup_{n \to \infty} \|\tilde{v}_{\eps_n'}-w_0\|_{\dot{H}^s}^2\\
&\geq& S_* \big(\|w_0\|_p^2+\limsup\|\tilde{v}_{\eps_n'}-w_0\|_p^2 \big)\\
&=& S_* \big(\|w_0\|_p^2+\lim\|\tilde{v}_{\eps_n'}-w_0\|_p^2 \big)\\
&=& S_* \big((\|w_0\|_p^p)^{\f{2}{p}}+(\lim|\tilde{v}_{\eps_n'}-w_0|_p^p)^{\f{2}{p}} \big)\\
&\geq& S_* \big(\|w_0\|_p^p+\lim\|\tilde{v}_{\eps_n'}-w_0\|_p^p \big)^{\f{2}{p}}\\
&=& S_*,
\Eea
Therefore, all the inequalities in the above expressions are equality.
Define, $A:=S_*^{-1}\|w_0\|_{\dot{H}^s}^2$,  $B:=S_*^{-1}\limsup_{n \to \infty} \|\tilde{v}_{\eps_n'}-w_0\|_{\dot{H}^s}^2.$
Note that $A+B=1.$ So, $0 \leq A, B \leq 1.$ As $w_0 \nequiv 0,$ we have $A \neq 0.$ Thus, $0 \leq B <1.$ We claim that $B=0.$
Suppose not, then, $0<B<1$ which implies  $B^{\f{p}{2}}< B$. Similarly, $0 \leq A \leq 1,$ implies $A^{\f{p}{2}}\leq A.$
Consequently, by Sobolev inequality and \eqref{a2} we have
\Bea
1&=& A+B\\
&>& A^{\f{p}{2}}+ B^{\f{p}{2}}\\
&=& S_*^{\f{-p}{2}}\big(\|w_0\|_{\dot{H}^s}^p+\limsup_{n \to \infty} \|\tilde{v}_{\eps_n'}-w_0\|_{\dot{H}^s}^p \big)\\
&\geq& S_*^{\f{-p}{2}}\big(S_*^{p/2}\|w_0\|_p^p+S_*^{p/2}\lim\|\tilde{v}_{\eps_n'}-w_0\|_p^p \big)\\
&=& 1,
\Eea
 which is a contradiction. Thus, $B=0$, that is, $\limsup_{n \to \infty}\|\tilde{v}_{\eps_n'}-w_0\|_{\dot{H}^s}^2=0$, which implies $\tilde{v}_{\eps_n'} \to w_0$
 in $\dot{H}^s(\Rn)$ and $L^p(\Rn)$. Hence, $v_{\eps_n'}\to w_0$  in $\dot{H}^s(\Rn)\cap L^p(\Rn)$, as  $\|v_{\eps_n'}\|_p \to 1$ as $n \to \infty$. As a consequence,
$\|w\|_p=1$. Moreover, $B=0$ implies $A=1$. Therefore, $w_0$ is a radial minimizer
 of \eqref{S*}, that is, $w_0 \in \{W_\la\}_{\la>0}$. Furthermore, from \eqref{4-14-1} it follows $\displaystyle\int_{B_1}|w_0(x)|^pdx=Q_*$. Therefore, we
 conclude that $w_0=W_1.$ Finally, by uniqueness of the limit the full sequence $(v_{\eps_n})$ converges to $W_1$ strongly in
 $\dot{H}^s(\Rn)$ and $L^p(\Rn).$

\end{proof}

\subsection{Rescaled equation estimates}
Doing a straight-forward computation using \eqref{lag-mult} and \eqref{theta}, it can be checked that the rescaled minimizer $v_\eps$ defined in \eqref{v-eps} solves the equation
\begin{equation}\lab{4-20-2}
 (-\De)^s v_\eps(x)+S_\eps \eps \la_\eps^{2s}v_\eps(x)=S_\eps \big(v_\eps^{p-1}(x)-\la_\eps^{\f{-2s(q-p)}{p-2}}v_\eps^{q-1}(x)\big).
\end{equation}
From the definition of $v_\eps$ we obtain,
$$\|v_\eps\|_q^q=\la_\eps^{\f{2s(q-p)}{(p-2)}}\|w_\eps\|_q^q,\,\ \|v_\eps\|_2^2=\la_\eps^{-2s}|w_\eps\|_2^2.$$
From Lemma \ref{lem-2} and Lemma \ref{lem-3}, we derive the following relation
\begin{equation}\label{rel}
 \la_\eps^{\f{-2s(q-p)}{p-2}}\|v_\eps\|_q^q=k\eps\|w_{\eps}\|_2^2\leq\f{NkS_*^{-1}}{(N-2s)(k+1)}\sigma_{\eps}(1+o(1)) \lesssim \sigma_\eps,
\end{equation}
which leads to the following lemma:

\begin{lemma}\label{lem-6}
 $\sigma_\eps^{-\f{p-2}{2s(q-p)}} \lesssim \la_\eps \lesssim \eps^{\f{-1}{2s}}\sigma_\eps^{\f{1}{2s}}.$
\end{lemma}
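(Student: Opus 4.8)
The plan is to extract the two-sided bound on $\la_\eps$ directly from relation \eqref{rel}, which already packages Lemma \ref{lem-2} and Lemma \ref{lem-3}. First I would record the two pieces of information contained in \eqref{rel}: on the one hand,
$$
\la_\eps^{-\f{2s(q-p)}{p-2}}\|v_\eps\|_q^q \lesssim \sigma_\eps,
$$
and on the other hand $\la_\eps^{-\f{2s(q-p)}{p-2}}\|v_\eps\|_q^q = k\eps\|w_\eps\|_2^2$. From Lemma \ref{lem-5} we have $v_\eps\to W_1$ in $\dot H^s(\R^N)\cap L^p(\R^N)$; since $p=2^*$ here, this convergence together with the fact (from Section 5.1) that $\|W_1\|_q^q$ is a fixed positive constant will give $\|v_\eps\|_q^q \to \|W_1\|_q^q$, hence $\|v_\eps\|_q^q \sim 1$ for $\eps$ small. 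The step that needs a short argument is precisely this: $L^q$ convergence of $v_\eps$ is not immediate from $\dot H^s\cap L^p$ convergence when $q>p=2^*$, so I would instead argue qualitatively — it is enough to know $\|v_\eps\|_q^q$ stays bounded away from $0$ and $\infty$. The lower bound $\|v_\eps\|_q^q\gtrsim 1$ is the real content; I expect to obtain it from the rescaled equation \eqref{4-20-2} together with $\|v_\eps\|_p\to 1$ and $\|v_\eps\|_{\dot H^s}^2\to S_*$, or alternatively by noting that if $\|v_\eps\|_q^q\to 0$ then passing to the limit in \eqref{4-20-2} would force $W_1$ to solve $(-\De)^s W = S_* W^{p-1}$ with the correct normalization (which it does), so that by itself does not give a contradiction — hence I would lean on the concentration-normalization \eqref{4-14-1}, $\int_{B_1}|v_\eps|^p = Q_*$, which pins the scale and prevents degeneration. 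In any case, once $\|v_\eps\|_q^q\sim 1$ is established, the first inequality above reads $\la_\eps^{-\f{2s(q-p)}{p-2}}\lesssim\sigma_\eps$, i.e.
$$
\la_\eps \gtrsim \sigma_\eps^{-\f{p-2}{2s(q-p)}},
$$
which is the lower bound in the lemma.

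For the upper bound I would use the other half of \eqref{rel}, namely $\la_\eps^{-\f{2s(q-p)}{p-2}}\|v_\eps\|_q^q = k\eps\|w_\eps\|_2^2$, rewritten via $\|v_\eps\|_2^2 = \la_\eps^{-2s}\|w_\eps\|_2^2$. Substituting $\|w_\eps\|_2^2 = \la_\eps^{2s}\|v_\eps\|_2^2$ into the bound $k\eps\|w_\eps\|_2^2 \lesssim \sigma_\eps$ from \eqref{rel} (which itself is Lemma \ref{lem-3}) gives
$$
\eps\,\la_\eps^{2s}\|v_\eps\|_2^2 \lesssim \sigma_\eps.
$$
Here $\|v_\eps\|_2^2$ need not converge, but since $N>4s$ the function $W_1\in L^2(\R^N)$ and $v_\eps\to W_1$; combined with the fact that $(v_\eps)$ is a minimizing sequence for $S_*$ that is concentration-normalized by \eqref{4-14-1}, one has $\|v_\eps\|_2^2\gtrsim 1$ for $\eps$ small (no vanishing of mass at the fixed scale). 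Therefore $\eps\,\la_\eps^{2s}\lesssim\sigma_\eps$, that is
$$
\la_\eps \lesssim \eps^{-\f{1}{2s}}\sigma_\eps^{\f{1}{2s}},
$$
which is the upper bound.

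Combining the two displays yields $\sigma_\eps^{-\f{p-2}{2s(q-p)}} \lesssim \la_\eps \lesssim \eps^{-\f{1}{2s}}\sigma_\eps^{\f{1}{2s}}$, as claimed. The main obstacle, as noted, is the lower bound $\|v_\eps\|_q^q\gtrsim 1$: one must rule out that the $L^q$-mass of the rescaled minimizers escapes or flattens out. I expect the cleanest route is to combine Lemma \ref{lem-5} with the local normalization \eqref{4-14-1}, which forces $v_\eps$ to carry a definite amount of mass in $B_1$, together with the uniform bound $0\le w_\eps\le 1$ (hence $0\le v_\eps$ with $v_\eps$ controlled pointwise on compacts via the Schauder estimates of Section 2.2) to get a uniform lower bound on $\int_{B_1}|v_\eps|^q$; everything else is the bookkeeping above and the algebra already done in \eqref{rel}.
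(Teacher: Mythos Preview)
Your proposal is correct and follows essentially the same route as the paper: both bounds are obtained from \eqref{rel} once one knows $\liminf_{\eps\to 0}\|v_\eps\|_q>0$ and $\liminf_{\eps\to 0}\|v_\eps\|_2>0$. The paper's execution of these two lower bounds is more direct than your exploratory discussion suggests: it simply uses the local embeddings $L^q(B_1)\hookrightarrow L^p(B_1)$ and $L^p(B_1)\hookrightarrow L^2(B_1)$ together with the $L^p$-convergence $v_\eps\to W_1$ from Lemma~\ref{lem-5}, writing $c\|v_\eps\chi_{B_1}\|_q\ge\|v_\eps\chi_{B_1}\|_p\ge\|W_1\chi_{B_1}\|_p-o(1)$ (and similarly for the $L^2$ bound), so no appeal to the rescaled equation or to Schauder estimates is needed.
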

\begin{proof}
{\bf Claim:} $$(i)\, \liminf_{\eps \to 0}\|v_\eps\|_q>0,\quad (ii)\, \liminf_{\eps \to 0}\|v_\eps\|_2>0.$$
 To see the claim, note that by Lemma \ref{lem-5} and in view of the embedding $L^q(B_1) \subset L^p(B_1)$ we have,
 $$c\|v_\eps\chi_{B_1}\|_q \geq \|v_\eps\chi_{B_1}\|_p\geq \|W_1\chi_{B_1}\|_p-\|(W_1-v_\eps)\chi_{B_1}\|_p=\|W_1\chi_{B_1}\|_p-o(1),$$
 where $\chi_{B_R}$ is the characteristic function of $B_R.$ Similarly, in view of the embedding $L^p(B_1) \subset L^2(B_1)$, we obtain
 $$\|v_\eps\chi_{B_1}\|_2 \geq \|W_1\chi_{B_1}\|_2-\|(W_1-v_\eps)\chi_{B_1}\|_2=\|W_1\chi_{B_1}\|_2-o(1).$$
Thus the claim follows.

Combining the claim (i) with \eqref{rel}, it follows $\sigma_\eps^{-\f{p-2}{2s(q-p)}} \lesssim \la_\eps $. Similarly, combining the claim (ii) with \eqref{rel}, it yields
\be\lab{4-13-4}c\la_{\eps}^{2s}\leq \la_{\eps}^{2s}\|v_{\eps}\|^2_2=\|w_{\eps}\|_2^2\lesssim\f{\sigma_{\eps}}{\eps}.\ee
Hence the lemma follows.
\end{proof}

Using Lemma \ref{lem-1}, we infer from Lemma \ref{lem-6} a lower bound for $\la_{\eps}$
\begin{equation}\label{lower-bd}
 \la_\eps \gtrsim \sigma_\eps^{\f{-(p-2)}{2s(q-p)}} \gtrsim
   \eps^{\f{-(p-2)}{2s(q-2)}}
 \end{equation}
and an upper bound for $\la_{\eps}$
\begin{equation}\label{upper-bd}
 \la_\eps \lesssim \eps^{\f{-1}{2s}}\sigma_\eps^{\f{1}{2s}} \lesssim  
     \eps^{-\f{p-2}{2s(q-2)}}.
 \end{equation}    

Note that 
above upper estimate and lower estimate for $\la_{\eps}$ are equivalent. Consequently we obtain

\begin{corollary}\label{cor-7}
Then $\|v_\eps\|_q$ and $\|v_\eps\|_2$ are bounded.
\end{corollary}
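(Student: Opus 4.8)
The plan is to read off both bounds directly from the scaling identities for $v_\eps$ together with the $\eps$--dependent estimates on $\la_\eps$, $\sigma_\eps$ and $\|w_\eps\|_2^2$ already assembled in Lemmas \ref{lem-1}--\ref{lem-6}; the only work is checking that the exponents of $\eps$ collapse to zero.

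First I would record the scaling identities coming from the definition \eqref{v-eps} of $v_\eps$, namely $\|v_\eps\|_2^2=\la_\eps^{-2s}\|w_\eps\|_2^2$ and $\|v_\eps\|_q^q=\la_\eps^{\f{2s(q-p)}{p-2}}\|w_\eps\|_q^q$, together with Lemma \ref{lem-2}, which gives $\|w_\eps\|_q^q=k\eps\|w_\eps\|_2^2$, and the consequence \eqref{4-13-4} of Lemma \ref{lem-3}, which gives $\eps\|w_\eps\|_2^2\lesssim\sigma_\eps$, i.e. $\|w_\eps\|_2^2\lesssim\sigma_\eps/\eps$.

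For the $L^2$ estimate I would combine $\|w_\eps\|_2^2\lesssim\sigma_\eps/\eps$ with the lower bound \eqref{lower-bd} for $\la_\eps$, which yields $\la_\eps^{-2s}\lesssim\eps^{\f{p-2}{q-2}}$; hence $\|v_\eps\|_2^2\lesssim\eps^{\f{p-2}{q-2}-1}\sigma_\eps$, and feeding in $\sigma_\eps\lesssim\eps^{\f{q-p}{q-2}}$ from Lemma \ref{lem-1} gives $\|v_\eps\|_2^2\lesssim\eps^{\f{(p-2)-(q-2)+(q-p)}{q-2}}=\eps^{0}=1$. For the $L^q$ estimate I would write $\|v_\eps\|_q^q=\la_\eps^{\f{2s(q-p)}{p-2}}k\eps\|w_\eps\|_2^2\lesssim\la_\eps^{\f{2s(q-p)}{p-2}}\sigma_\eps$, which is exactly the content of \eqref{rel}, and then use the upper bound \eqref{upper-bd} for $\la_\eps$, which gives $\la_\eps^{\f{2s(q-p)}{p-2}}\lesssim\eps^{-\f{q-p}{q-2}}$; combining once more with $\sigma_\eps\lesssim\eps^{\f{q-p}{q-2}}$ yields $\|v_\eps\|_q^q\lesssim 1$.

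There is no genuine obstacle here: the statement is a bookkeeping corollary of the preceding lemmas. The one point requiring a little attention is to pair the two--sided bounds on $\la_\eps$ with the correct norm --- the lower bound \eqref{lower-bd} is the one that controls $\|v_\eps\|_2$, the upper bound \eqref{upper-bd} the one that controls $\|v_\eps\|_q$ --- and to carry out the arithmetic of the exponents with $p=2^*$ held fixed. (Boundedness below, $\liminf_{\eps\to 0}\|v_\eps\|_q>0$ and $\liminf_{\eps\to 0}\|v_\eps\|_2>0$, is not claimed here; it was already obtained inside the proof of Lemma \ref{lem-6}.)
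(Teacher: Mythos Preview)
Your proposal is correct and follows essentially the same route as the paper: both arguments use the scaling identities for $\|v_\eps\|_2$ and $\|v_\eps\|_q$, bound $\|w_\eps\|_2^2$ via Lemma~\ref{lem-3} (equivalently \eqref{4-13-4}) and $\sigma_\eps$ via Lemma~\ref{lem-1}, and then pair the lower bound \eqref{lower-bd} on $\la_\eps$ with the $L^2$ norm and the upper bound \eqref{upper-bd} with the $L^q$ norm so that the exponents of $\eps$ cancel. The only cosmetic difference is that the paper invokes \eqref{rel} and \eqref{4-13-4} directly rather than rederiving them from the scaling identities.
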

\begin{proof}
 Note that, from \eqref{rel} and Lemma \ref{lem-1} we have
 \begin{equation}\label{**}
  \|v_\eps\|_q^q \leq c\sigma_\eps\la_\eps^{\f{2s(q-p)}{p-2}} \leq c\eps^{\f{q-p}{q-2}}\la_\eps^{\f{2s(q-p)}{p-2}}.
 \end{equation}
Hence, substituting upper estimate of $\la_{\eps}$ from \eqref{upper-bd}  we get, $\|v_\eps\|_q^q \leq c.$
Similarly, from \eqref{4-13-4}, \eqref{lower-bd} and Lemma \ref{lem-1}, we conclude
$$\|v_\eps\|_2^2 \lesssim \eps^{-1}\la_\eps^{-2s}\sigma_\eps \lesssim \eps^{-1}\eps^{\f{p-2}{q-2}}\sigma_\eps \lesssim 1.$$
 Hence, $\|v_\eps\|_2$ is bounded.
\end{proof}

\subsection{Proof of Theorem \ref{thm-2.5} concluded}

\begin{lemma}\label{lem-c}
$\|v_\eps\|_{\infty}=O(1).$
\end{lemma}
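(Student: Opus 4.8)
Recall that $v_\eps$ is radially symmetric and nonincreasing, being the rescaling \eqref{v-eps} of $w_\eps$, so $\|v_\eps\|_\infty=v_\eps(0)$. By Lemma \ref{l:1}(i) with $t=p=2^*$ we already have $v_\eps(x)\le C(N,p)|x|^{-\f{N-2s}{2}}\|v_\eps\|_p\le C|x|^{-\f{N-2s}{2}}$ uniformly in $\eps$ (since $\|v_\eps\|_p=1+o(1)$), so the only phenomenon to rule out is concentration at the origin. The plan is a Moser/Brezis--Kato iteration for equation \eqref{4-20-2} rewritten as $(-\De)^s v_\eps=V_\eps v_\eps$ in $\Rn$, where
$$V_\eps:=S_\eps\big(v_\eps^{p-2}-\la_\eps^{-\f{2s(q-p)}{p-2}}v_\eps^{q-2}\big)-S_\eps\,\eps\,\la_\eps^{2s},$$
whose positive part is pointwise controlled by $0\le V_\eps^+\le S_\eps v_\eps^{2^*-2}$.

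The only place where criticality enters is that $v_\eps^{2^*-2}$ sits exactly in $L^{N/2s}(\Rn)$ (because $(2^*-2)\f{N}{2s}=2^*$), the borderline Lebesgue space in which a bound on the norm of the potential does not by itself yield an $L^\infty$ bound. This is precisely where Lemma \ref{lem-5} is used: strong convergence $v_\eps\to W_1$ in $L^{2^*}(\Rn)$, together with $|\{v_\eps>L\}|\le CL^{-2^*}$, makes the family $\{v_\eps^{2^*}\}_\eps$ uniformly integrable, so that
$$\delta(L):=\sup_{0<\eps<\eps_*}\Big(\int_{\{v_\eps>L\}}v_\eps^{2^*}\,dx\Big)^{2s/N}\longrightarrow 0\qquad(L\to\infty),$$
and hence $\|V_\eps^+\|_{L^{N/2s}(E)}\le S_\eps\,\delta(L)$ for every measurable $E\subset\{v_\eps>L\}$, uniformly in $\eps$.

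First I would gain a little integrability past the critical exponent. Fix $\beta_0\in(1,\f{N}{N-2s}]$ and $T>0$ and test the equation against $\varphi_T:=v_\eps\min(v_\eps,T)^{2(\beta_0-1)}$, which is admissible since it is dominated by $T^{2(\beta_0-1)}v_\eps\in H^s(\Rn)\cap L^q(\Rn)$ (recall $v_\eps\in H^s\cap L^q$ and $q>2^*$). Using the standard algebraic inequality for the Gagliardo form (which bounds $\langle v_\eps,\varphi_T\rangle_{\dot H^s(\Rn)}$ from below by $C(N,s,\beta_0)^{-1}\|v_\eps\min(v_\eps,T)^{\beta_0-1}\|_{\dot H^s(\Rn)}^2$) together with the Sobolev inequality, one obtains
$$\big\|v_\eps\min(v_\eps,T)^{\beta_0-1}\big\|_{2^*}^2\le C\!\int_{\Rn}\!V_\eps^+\,v_\eps^2\min(v_\eps,T)^{2(\beta_0-1)}\,dx .$$
Splitting the right-hand side over $\{v_\eps\le L\}$ and $\{v_\eps>L\}$, estimating the first piece by $CS_\eps L^{2^*-2}\big(\|v_\eps\|_2^2+\|v_\eps\|_{2^*}^{2^*}\big)$ (here using $2\beta_0\le 2^*$ and that $\|v_\eps\|_2,\|v_\eps\|_{2^*}$ are bounded by Corollary \ref{cor-7}) and the second, by H\"older, by $CS_\eps\,\delta(L)\,\|v_\eps\min(v_\eps,T)^{\beta_0-1}\|_{2^*}^2$, one fixes $L$ \emph{once and for all, independently of $\eps$}, so that $CS_\eps\,\delta(L)\le\f12$, absorbs the last term, and lets $T\to\infty$ by Fatou. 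This gives $v_\eps\in L^{r_0}(\Rn)$ with $r_0:=2^*\beta_0>2^*$ and $\sup_\eps\|v_\eps\|_{r_0}<\infty$, hence $V_\eps^+\le S_\eps v_\eps^{2^*-2}\in L^{p_0}(\Rn)$ with $p_0:=\f{r_0}{2^*-2}>\f{N}{2s}$, uniformly in $\eps$. Now the problem is in the subcritical regime: with $V_\eps\in L^{p_0}$, $p_0>N/2s$ strictly and $\eps$-uniformly bounded, repeating the same test-function estimate along $\beta_{k+1}=\theta\beta_k$ with fixed $\theta:=\f{2^*}{2p_0'}>1$ ($p_0'$ the conjugate of $p_0$) and summing the resulting sequence of exponents produces an $\eps$-independent bound $\|v_\eps\|_{L^\infty(\Rn)}\le C\big(\|v_\eps\|_{2^*},\|V_\eps\|_{p_0}\big)$; alternatively, once local boundedness is in hand one may simply invoke the regularity estimates of \cite{RS1}.

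The main obstacle is the first step: it closes only because the truncation level $L$ may be chosen uniformly in $\eps$, which is exactly the non-concentration of the critical mass $\int v_\eps^{2^*}$ furnished by the strong $L^{2^*}$ convergence $v_\eps\to W_1$ of Lemma \ref{lem-5}. Everything afterwards is the routine (subcritical) iteration.
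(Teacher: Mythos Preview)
Your argument is correct, but the paper takes a shorter route that sidesteps the critical Brezis--Kato step entirely. Instead of exploiting the strong $L^{2^*}$ convergence of Lemma~\ref{lem-5} for uniform integrability, the paper uses the uniform $L^q$ bound from Corollary~\ref{cor-7} (recall $q>2^*$) together with the radial lemma (Lemma~\ref{l:1}(i)) applied with $t=q$ to obtain the pointwise decay $v_\eps(x)\le C_q\|v_\eps\|_q\,|x|^{-N/q}$. This yields $V_\eps(x)\le S_\eps v_\eps^{p-2}(x)\le C_*|x|^{-2sp/q}$, and since $p<q$ the exponent $2sp/q$ is strictly less than $2s$, so this majorant lies in $L^t_{loc}(\Rn)$ for some $t>N/(2s)$; the potential is therefore already subcritical, uniformly in $\eps$, and a single application of the local $L^\infty$ estimate of \cite[Proposition~2.4]{JLX} gives $\|v_\eps\|_{L^\infty(B_R)}\le C$ directly. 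Your approach is the standard Brezis--Kato machinery and is more portable (it would work in non-radial settings or without the extra $L^q$ control), but here the paper's use of the $L^q$ bound is the more economical choice: it converts the critical problem into a subcritical one before any iteration begins, so the delicate uniform-absorption step you carry out is never needed. Note also that your bound on the ``low'' piece could be simplified to $S_\eps L^{2\beta_0-2}\|v_\eps\|_{2^*}^{2^*}$, avoiding the appeal to $\|v_\eps\|_2$.
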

\begin{proof}
From \eqref{4-20-2}, we note that  $v_\eps$ is a positive solution of the linear inequality
 $$(-\De)^sv_\eps-V_\eps(x)v_\eps \leq 0,\,\ x \in \Rn,$$
 where $V_\eps(x):=S_\eps v_\eps^{p-2}(x).$
Using Lemma \ref{l:1}(i), we have
\be\lab{4-15-1}v_\eps(x) \leq C_q\|v_\eps\|_q|x|^{\f{-N}{q}}.\ee
 Hence, by Corollary \ref{cor-7} we obtain
 \Bea
 V_\eps(x) &\leq& S_\eps C_q^{p-2}\|v_\eps\|^{p-2}_q|x|^{\f{-N(p-2)}{q}}\\
 &\leq& C_*|x|^{-2sp/q},
 \Eea
 for some $C_*>0.$ Thus, $v_\eps$ is a positive subsolution of
\be\lab{4-17-1}(-\De)^s v -V_*(x)v= 0,\ee
 where $V_*(x)=C_*|x|^{-\f{2sp}{q}} \in L^t_{loc}(\Rn)$, for $t<\f{N-2s}{4s}q$. We choose, $t$ such that $t\in (\f{N}{2s}, \f{N-2s}{4s}q)$. Then $V_*\in L^t_{loc}(\Rn)$, where $t>\f{N}{2s}$. This in turn implies, any positive solution $v$ of \eqref{4-17-1} is in  $L^{\infty}_{loc}(\Rn)$ (see \cite[Proposition 2.4]{JLX}). Also observe that using \eqref{4-15-1} via Corollary \ref{cor-7}, we have  $v_{\eps}(x)\to 0$ uniformly as $|x|\to \infty$. Therefore, we can choose $R>0$ large enough such that
 $\max_{|x|=R}{v_{\eps}(x)}\leq \max_{|x|=R} v(x)$. Consequently, applying maximum principle $v_{\eps}\leq v$ in $B_R(0)$. This in turn implies, $\|v_{\eps}\|_{L^{\infty}(B_R(0))}$ is uniformly bounded. Combining this with \eqref{4-15-1} via Corollary \ref{cor-7}, the lemma follows.
\end{proof}

\begin{corollary}\label{cor-d}
$v_\eps \to W_1$ in $C^{2s-\de}$, for some $\de>0$ and $L^r(\Rn)$, for any $r \geq p=2^*$. In particular, $v_\eps(0)\simeq W_1(0).$
\end{corollary}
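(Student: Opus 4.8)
The plan is to upgrade the $\dot H^s(\Rn)$– and $L^p(\Rn)$–convergence $v_\eps\to W_1$ established in Lemma \ref{lem-5} to convergence in $C^{2s-\de}(\Rn)$ and in $L^r(\Rn)$, $r\ge p$, by feeding the uniform $L^\infty$–bound of Lemma \ref{lem-c} into the rescaled equation \eqref{4-20-2} and exploiting the uniform decay of $v_\eps$ at infinity.

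First I would check that the right–hand side of \eqref{4-20-2},
\[
g_\eps(x):=S_\eps\bigl(v_\eps^{p-1}(x)-\la_\eps^{-\f{2s(q-p)}{p-2}}v_\eps^{q-1}(x)\bigr)-S_\eps\eps\la_\eps^{2s}v_\eps(x),
\]
is bounded in $L^\infty(\Rn)$ uniformly in $\eps$: $S_\eps=S_*+\sigma_\eps$ is bounded, $\la_\eps^{-2s(q-p)/(p-2)}\le1$ for small $\eps$ by Lemma \ref{lem-6}, $S_\eps\eps\la_\eps^{2s}\to0$ since $\eps\la_\eps^{2s}\|v_\eps\|_2^2=\eps\|w_\eps\|_2^2\to0$ by Corollary \ref{cor-4} while $\|v_\eps\|_2$ is bounded below (claim (ii) in the proof of Lemma \ref{lem-6}), and $\|v_\eps\|_\infty=O(1)$ by Lemma \ref{lem-c}. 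Since $v_\eps(x)\le C_q\|v_\eps\|_q|x|^{-N/q}$ with $\|v_\eps\|_q=O(1)$ (Lemma \ref{l:1}(i) and Corollary \ref{cor-7}), the family $v_\eps$ is also bounded in $L^1(\Rn,(1+|x|)^{-(N+2s)}\,dx)$, uniformly in $\eps$. Hence, arguing exactly as in the proof of Lemma \ref{l:2}(i), the interior Schauder estimates of \cite[Theorem 1.1]{RS1} on unit balls give first a uniform $C^{2s}_{loc}(\Rn)$–bound on $v_\eps$, then a uniform $C^{\eta}_{loc}(\Rn)$–bound on $g_\eps$ for some $\eta>0$, and therefore a uniform bound for $v_\eps$ in $C^{2s+\al}(\Rn)$ for some $\al>0$; the bound is uniform over all unit balls since the relevant constants depend only on $\|g_\eps\|_\infty$ and the weighted $L^1$–norm.

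Next I would combine this with the uniform smallness of $v_\eps$ (and of $W_1$) at infinity, together with the elementary Hölder interpolation
\[
\|f\|_{C^{2s-\de}(\Rn\setminus B_R)}\lesssim\|f\|_{L^\infty(\Rn\setminus B_R)}^{\de/2s}\,\|f\|_{C^{2s}(\Rn\setminus B_R)}^{1-\de/2s},
\]
to deduce that $\{v_\eps\}$ is precompact in $C^{2s-\de}(\Rn)$ for each $\de\in(0,2s)$: extract a $C^{2s-\de}_{loc}$–limit along a subsequence and control the tail over $\{|x|>R\}$ uniformly in $\eps$ by re–running the Schauder estimate on balls $B_1(x_0)$ with $|x_0|$ large, where $\|g_\eps\|_{L^\infty(B_1(x_0))}\to0$ uniformly in $\eps$. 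Any such limit coincides with $W_1$ because $v_\eps\to W_1$ in $L^p(\Rn)$ by Lemma \ref{lem-5}; since the limit is independent of the subsequence, the full family converges, $v_\eps\to W_1$ in $C^{2s-\de}(\Rn)$. The $L^r$–convergence for $r\ge p$ then follows from $\|v_\eps-W_1\|_r\le\|v_\eps-W_1\|_p^{p/r}\|v_\eps-W_1\|_\infty^{1-p/r}$, using $\|v_\eps-W_1\|_p\to0$ (Lemma \ref{lem-5}) and the uniform bound on $\|v_\eps-W_1\|_\infty$. Evaluating the $C^{2s-\de}(\Rn)$–convergence at the origin and using $W_1(0)>0$ yields $v_\eps(0)\to W_1(0)$, i.e. $v_\eps(0)\simeq W_1(0)$.

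The step I expect to be most delicate is the passage from local to global $C^{2s-\de}$–convergence: one must make both the decay of $v_\eps$ and the decay of its localized $C^{2s}$–norm uniform in $\eps$, so that the contribution of $\{|x|>R\}$ to $\|v_\eps-W_1\|_{C^{2s-\de}(\Rn)}$ is uniformly small. When $2s>1$ the interpolation above is stated with $C^{1,2s-1}$–norms, but the argument is unchanged.
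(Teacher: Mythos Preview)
Your proposal is correct, but it takes a more laborious route than the paper's. The paper argues in two lines: first, exactly as in your final step, it interpolates between the $L^p$--convergence of Lemma~\ref{lem-5} and the uniform $L^\infty$--bound of Lemma~\ref{lem-c} to obtain $\|v_\eps-W_1\|_r\to 0$ for every $r\ge p$; then, choosing $r>\max\{2^*,N/2s\}$, it invokes the global regularity estimate \cite[Theorem 1.6(iii)]{Ros-Ser} (essentially the Riesz--potential mapping $L^r\to C^{2s-N/r}$ applied to the equation satisfied by $v_\eps-W_1$) to conclude $[v_\eps-W_1]_{C^{2s-N/r}(\Rn)}\to 0$ directly. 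No compactness, no tail analysis, no uniform $C^{2s+\alpha}$--bounds are needed.

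Your approach instead establishes uniform $C^{2s+\alpha}(\Rn)$--bounds on $v_\eps$ via Schauder estimates, then runs a compactness--plus--uniform--decay argument to upgrade local $C^{2s-\de}$--convergence to global. This is sound, and your identification of the local--to--global passage as the delicate step is accurate. The payoff of your method is that it is self--contained and does not rely on the specific global estimate from \cite{Ros-Ser}; the cost is the extra work controlling the Schauder constants uniformly on far--out balls and handling the cross terms in the H\"older seminorm. The paper's shortcut is that, once $L^r$--convergence is in hand for arbitrarily large $r$, the $C^{2s-\de}$--convergence follows from a single inequality, with $\de=N/r$ adjustable by taking $r$ large.
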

\begin{proof}
As a consequence of Lemma \ref{lem-c} and Lemma \ref{lem-5}, we obtain via interpolation inequality, $$\|v_\eps-W_1\|_r \leq \|v_\eps-W_1\|_p^{1-\theta}\|v_\eps-W_1\|_{\infty}^{\theta},$$ where $\theta$ satisfies $\f{1}{r}=\f{1-\theta}{p}$. Therefore,
$$\|v_\eps-W_1\|_r \leq C\|v_\eps-W_1\|_p^{1-\theta}\to 0.$$
We choose $r$ such that, $\max\{2^*, \f{N}{2s}\}<r<\infty$.   Then, invoking \cite[Theorem 1.6(iii)]{Ros-Ser} we have
 $$[v_\eps-W_1]_{C^{\al}(\Rn)} \leq C\|v_\eps-W_1\|_r\to 0,$$ where $\al=2s-\f{N}{r}$.
 Set, $\de:=\f{N}{r}$, which implies
 $$v_\eps \to W_1  \quad\mbox{in}\quad C^{2s-\de}(\Rn).$$
\end{proof}

\textit{\bf Proof of Theorem \ref{thm-2.5}}:
\begin{proof}
From Corollary \ref{cor-d} we have,
$v_\eps \to W_1$ in $C^{2s-\de}(\Rn)$ for some $\de>0$, $v_\eps \to W_1$ in $L^r(\Rn)$ for any $r \geq p=2^*$ and $v_\eps(0)\simeq W_1(0).$
Also from \eqref{tilde-v-eps}, we have
\be\lab{5-22-1}
\widetilde{v_\eps}(x)=\la_\eps^{\f{2s}{p-2}}u_\eps(\la_{\eps}x)=\la_\eps^{\f{2s}{p-2}}w_\eps\bigg(\f{\la_{\eps}x}{S_{\eps}^{\f{1}{2s}}}\bigg)
=v_\eps\bigg(\f{x}{S_{\eps}^{\f{1}{2s}}}\bigg).\ee
As $v_\eps \to W_1$ as $\eps \to 0,$ so $\widetilde{v_\eps} \to U_1$ in $C^{2s-\de}(\Rn)$ for some $\de>0$ and in $L^r(\Rn)$ for any $r \geq p$, where $U_1(x)=W_1\bigg(\f{x}{S_*^{\f{1}{2s}}}\bigg)$ for all $x \in \Rn.$ Further, $ \widetilde{v_\eps} \simeq U_1(0)$. Since, $p<q$, we obtain
$\widetilde{v_\eps} \to U_1$ in $L^q(\Rn)$ as $\eps \to 0.$
Also from \eqref{lower-bd} and \eqref{upper-bd}, we have
\begin{equation}\label{la-}
\la_\eps \sim \eps^{\f{-1}{2s}\f{(p-2)}{q-2}}. 
\end{equation}

Since, $u_\eps(x)=\la_\eps^{-\f{2s}{p-2}}\widetilde{v_\eps}(\f{x}{\la_{\eps}})$ and $\widetilde{v_\eps}(0)\simeq U_1(0),$ we have
$u_\eps(0)=\la_\eps^{\f{-2s}{p-2}}\widetilde{v_\eps}(0)\simeq \la_\eps^{\f{-2s}{p-2}}U_1(0).$
Moreover, using \eqref{la-}, we get $u_\eps(0) \simeq \eps^{\f{1}{q-2}}$. Hence  the theorem follows.
\end{proof}

\section{ Asymptotic behavior in the supercritical case $p>2^*$}

For $p>2^*$, the limit equation
\begin{equation}
\label{P_0}
(P_{0})\qquad
(-\De)^s u  =|u|^{p-2}u -|u|^{q-2}u \quad\text{in }\quad \Rn, 
\end{equation}
admits  a non-negative radially decreasing solution $u_0 \in \dot{H}^s(\Rn) \cap L^q(\Rn)$ (see \cite{BM}, theorem 1.7]). Further, following the similar arguments of Lemma \ref{l:max},  we obtain  $0\leq u_0\leq 1$. Therefore, since $u_0$ is a classical solution, using maximum principle it follows that $u_0$ is strictly positive in $\Rn$.

 Also, from [\cite{BM}, Theorem 1.3, Theorem 1.4] we have, $u \in C^{2s+\alpha}(\Rn)$ and 
\begin{equation}\label{E1}
\lim_{|x| \to \infty}|x|^{N-2s}u_0(x)=c_0
\end{equation}
for some $c_0>0$. Therefore, there exists $R_1>e$ such that 
\begin{equation}\label{17th Feb-1}
u_0(x)=\frac{c_0}{|x|^{N-2s}} , \quad |x|>R_1.
\end{equation}
Similarly to \eqref{u-eps}, the ground state $u_0$ admits a variational characterization in the Sobolev space $\dot{H}^s(\Rn)$ via the rescaling
$$u_0(x):=w_0\bigg(\frac{x}{S_0^{\frac{1}{2s}}}\bigg),$$
where $w_0$ is a positive radial minimizer of the constrained minimization problem
\begin{equation}\label{S_0}
S_0=\inf \bigg\{\|w\|_{\dot{H}^s(\Rn)}^2: w \in \dot{H}^s(\Rn), 2^*\int_{\Rn}F_0(w)dx=1 \bigg\},
\end{equation}
and $F_0$ is defined by
\begin{equation}
F_0(u)=\int_{0}^{u}f_0(s)ds=\begin{cases}
\frac{1}{p}u^p-\frac{1}{q}u^q,\, u \in [0,1]\\
\frac{1}{p}-\frac{1}{q},\,\, \mbox{otherwise}.
\end{cases}
\end{equation}
Similarly to \eqref{lag-mult}--\eqref{theta}, we conclude that the minimizer $w_0$ solves the following Euler-Lagrange's equation
\begin{equation}\label{E2}
(-\De)^s w_0=S_0(|w_0|^{p-2}w_0-|w_0|^{q-2}w_0) \quad\mbox{in}\quad \Rn.
\end{equation}
Further, $w_0$ satisfies the Nehari's identity
 \begin{equation}\label{E3}
 \|w_0\|_{\dot{H}^s(\Rn)}^2=S_0 \big(\|w_0\|_p^p-\|w_0\|_q^q\big)
 \end{equation}
 and the Poho\v zaev's identity
 \begin{equation}\label{E4}
 \|w_0\|_{\dot{H}^s(\Rn)}^2=S_0 2^* \bigg(\frac{1}{p}\|w_0\|_p^p-\frac{1}{q}\|w_0\|_q^q \bigg).
 \end{equation} 
 As $w_0$ is a minimizer of $(S_0),$ hence we have,
 \begin{equation}\label{E5}
 \|w_0\|_{\dot{H}^s(\Rn)}^2=S_0 \quad\mbox{with}\quad 2^*\bigg(\frac{1}{p}\|w_0\|_p^p-\frac{1}{q}\|w_0\|_q^q \bigg)=1.
 \end{equation}
Combining (\ref{E3}), (\ref{E4}) and (\ref{E5}), we obtain
 \begin{equation}\label{E6}
 \|w_0\|_p^p-\|w_0\|_q^q=\frac{2^*}{p}\|w_0\|_p^p-\frac{2^*}{q}\|w_0\|_q^q=1.
 \end{equation}
This in turn, implies
 $\|w_0\|_p^p=\frac{(q-2^*)p}{(q-p)2^*}$ and $\|w_0\|_q^q=\frac{(p-2^*)q}{(q-p)2^*}.$
 
\vspace{2mm} 
 
\subsection{ Energy and norm estimates}

To control the relations between $S_{\eps}$ and $S_0$, it is convenient to consider the (equivalent to $S_0$) scaling invariant quotient
 \begin{equation}\label{E7}
 \mathcal{S}_0(w)=\frac{\|w\|_{\dot{H}^s(\Rn)}^2}{\bigg(2^*\displaystyle\int_{\Rn}F_0(w)dx\bigg)^{\frac{N-2s}{N}}}, \quad w \in \dot{H}^s(\Rn),\quad \int_{\Rn}F_0(w)dx>0.
 \end{equation} 
 Then,
 \begin{equation}\label{E8}
 S_0=\inf_{w \in \dot{H}^s(\Rn), F_0(w)>0} \mathcal{S}_0(w).
 \end{equation}
 
 Thanks to (\ref{E1}), it follows that $w_0 \in L^2(\Rn)$ for $N>4s.$  To study the lower dimensions,  given $R>R_1$ (where $R_1$ is as defined in \eqref{17th Feb-1}), we define a cut-off function $\eta_R \in C_c^{\infty}(\R)$ as follows $\eta_R(r)=1$ for $|r|<R$, $\eta_R(r)=0$ for $|r|>2R$, $0<\eta_R(r)<1$ for $R\leq |r|\leq 2R$ and $|\eta_R'|<2/R$. 
Then using \eqref{E1},  we obtain the following three estimates:
\be\lab{8th Feb-1}\|\eta_R w_0\|_t^t=\|w_0\|_t^t\big(1-O(R^{N-t(N-2s)})\big)\quad \forall\, \, t>\frac{N}{N-2s},\ee
\begin{equation}\lab{8th Feb-3}
\|\eta_R w_0\|_2^2=\begin{cases}
             O(ln\, R),\,\quad\mbox{if}\quad N=4s,\\
             O(R^{4s-N}), \,\,\quad\mbox{if}\quad 2s<N<4s.
            \end{cases}
\end{equation}
\be\lab{8th Feb-2}\|\eta_Rw_0\|_{\dot{H}^s(\Rn)}^2 \lesssim S_0+O(R^{-(N-2s)}).\ee

 To see this,
 \Bea
 \|\eta_R w_0\|_t^t \leq\int_{B_{2R}}|w_0|^tdx&=& \int_{\Rn}|w_0|^tdx-\int_{\Rn \setminus B_{2R}}|w_0|^tdx.
 \Eea
Using \eqref{E1}, choose $R>0$  large enough so that $|w_0| \geq \frac{c_1}{|x|^{N-2s}}$ for $|x| \geq R.$
 Therefore,
 \Bea
 \int_{\Rn \setminus B_{2R}}|w_0|^tdx \geq c \int_{\Rn \setminus B_{2R}}\frac{dx}{|x|^{(N-2s)t}}\geq cR^{N-t(N-2s)}.
 \Eea
 Hence,
 \begin{equation}\label{E18}
 \|\eta_R w_0\|_t^t \leq |w_0|^t_t-\tilde{c_1}R^{N-t(N-2s)}.
 \end{equation}
similarly,  using \eqref{E1} we also have
\be\label{E19}
 \|\eta_Rw_0\|_t^t\geq \int_{B_R}|w_0|^tdx=\int_{\Rn}|w_0|^tdx-\int_{\Rn \setminus B_R}|w_0|^tdx \geq  
 |w_0|^t_t- c_2 R^{N-t(N-2s)}.
\ee
Combining (\ref{E18}) and (\ref{E19}), estimate \eqref{8th Feb-1} follows. 

Now to see \eqref{8th Feb-3}, note that
\bea \label{17th Feb-2}
\int_{\Rn}|\eta_Rw_0|^2dx=\int_{B_{2R}}\eta_R^2|w_0|^2dx&\leq& \int_{B_{2R}}|w_0|^2dx\no\\
&=& \int_{B_{R_1}}|w_0|^2dx+\int_{B_{2R}\setminus B_{R_1}}|w_0|^2dx \no\\
&\leq& \|w_0\|_{L^2(B_{R_1})}+c^2 \int_{B_{2R}\setminus B_{R_1}}\frac{dx}{|x|^{2(N-2s)}}\no\\
&\leq&\|w_0\|_{L^2(B_{R_1})}+c_1^2 \int_{1}^{2R}\frac{r^{N-1}}{r^{2(N-2s)}}dr\no\\
&=&\|w_0\|_{L^2(B_1)}+c_1^2 \int_{1}^{2R}r^{4s-N-1}dr
\eea	
On the other hand,
\bea \label{18th Feb-1}
\int_{\Rn}|\eta_Rw_0|^2dx\geq \int_{B_{R}}\eta_R^2|w_0|^2dx
=\int_{B_R}|w_0|^2dx \geq c^2\int_{B_R\setminus B_{R_1}}\frac{dx}{|x|^{2(N-2s)}}=c_2\int_{R_1}^{R}r^{4s-N-1}dr \label{+3}
\eea			
Since  $R>R_1>e>2$ implies $ln(2R)<ln (R^2)  =2 ln\, R$, in the case of $N=4s$, from \eqref{17th Feb-2} and \eqref{18th Feb-1}, we obtain
\be\lab{17th Feb-3}  c (ln\, R)<c_2 ln(R/R_1)<\int_{\Rn}|\eta_Rw_0|^2dx \leq C(1+2ln(R))\leq C(ln R),\ee
for some constant $c, C>0$.
Similarly, in this case of $2s<N<4s$, from \eqref{17th Feb-2} and \eqref{18th Feb-1}, it's straight-forward to see that
\be\lab{17th Feb-4} cR^{4s-N}<\int_{\Rn}|\eta_Rw_0|^2dx \leq C(1+R^{4s-N})\leq \tilde CR^{4s-N},\ee
for some constant $c, C>0$. Combining  \eqref{17th Feb-3} and \eqref{17th Feb-4}, estimate \eqref{8th Feb-3} follows.

{\bf Proof of \eqref{8th Feb-2}:} \bea\label{Ee}
\int_{\Rn \times \Rn} \frac{|\eta_R(x)w_0(x)-\eta_R(y)w_0(y)|^2}{|x-y|^{N+2s}}dxdy
&=&\int_{B_R \times B_R}\frac{|w_0(x)-w_0(y)|^2}{|x-y|^{N+2s}}dxdy\no\\
&+&2 \int_{B_R\times B_R^c}\frac{|\eta_R(x)w_0(x)-\eta_R(y)w_0(y)|^2}{|x-y|^{N+2s}}dxdy\no\\
&+&\int_{B_R^c \times B_R^c}\frac{|\eta_R(x)w_0(x)-\eta_R(y)w_0(y)|^2}{|x-y|^{N+2s}}dxdy\no\\
&:=&I_1 +2I_2 +I_3.
\eea	
First, we estimate $I_2.$	
\bea\label{Ef}
I_2&=& \int_{B_R\times B_R^c} \frac{|\eta_R(x)w_0(x)-\eta_R(y)w_0(y)|^2}{|x-y|^{N+2s}}dxdy\no\\
&=&\int_{B_R\times B_R^c} \frac{|w_0(x)-\eta_R(y)w_0(y)|^2}{|x-y|^{N+2s}}dxdy\no\\
&\leq&\int_{B_R\times B_R^c} \frac{|w_0(x)-w_0(y)|^2}{|x-y|^{N+2s}}dxdy+\int_{B_R\times B_R^c} \frac{|w_0(y)|^2|1-\eta_R(y)|^2}{|x-y|^{N+2s}}dxdy.\no\\
\eea
Now,
\bea\label{Eg}
\int_{B_R\times B_R^c} \frac{|w_0(y)|^2|\eta_R(x)-\eta_R(y)|^2}{|x-y|^{N+2s}}dxdy
&=&\int_{B_R^c}|w_0(y)|^2\bigg(\int_{B_R}\frac{|\eta_R(x)-\eta_R(y)|^2}{|x-y|^{N+2s}}dx  \bigg)dy\no\\
&\leq& \bigg(\int_{B_R^c}|w_0(y)|^{2^*}dy\bigg)^{\frac{2}{2^*}}.\no
\\
&&\qquad\bigg[\int_{B_R^c}\bigg(\int_{B_R}\frac{|\eta_R(x)-\eta_R(y)|^2}{|x-y|^{N+2s}}dx \bigg)^{\frac{2^*}{2^*-2}} dy\bigg]^{\frac{2^*-2}{2^*}}\no\\
&\leq&C\bigg(\int_{B_R^c}\frac{1}{|y|^{(N-2s)2^*}}dy \bigg)^{\frac{N-2s}{N}}I_{2,2}\no\\
&=&C_1R^{-(N-2s)}I_{2,2},
\eea
where
\begin{eqnarray}\label{Ee1}
I_{2,2}&=&\bigg[\int_{B_R^c}\bigg(\int_{B_R}\frac{|\eta_R(x)-\eta_R(y)|^2}{|x-y|^{N+2s}}dx  \bigg)^{N/2s}dy \bigg]^{\frac{2s}{N}}\no\\
 &\leq&\bigg[\int_{B_{2R}\setminus B_R}\bigg(\int_{B_R}\frac{|\eta_R(x)-\eta_R(y)|^2}{|x-y|^{N+2s}}dx  \bigg)^{N/2s}dy \bigg]^{\frac{2s}{N}}\no\\
&&\quad+\bigg[\int_{B_{2R}^c}\bigg(\int_{B_R}\frac{|\eta_R(x)-\eta_R(y)|^2}{|x-y|^{N+2s}}dx \bigg)^{N/2s}dy\bigg]^{\frac{2s}{N}}\no\\
&=&:(I_{2,2}^1)^{\frac{2s}{N}}+(I_{2,2}^2 )^{\frac{2s}{N}}.
\end{eqnarray} 
Now,
\bea\label{Eh}
(I_{2,2}^1)^{\frac{2s}{N}}&\leq&\bigg[\int_{B_{2R}\setminus B_R}\bigg(\int_{|x-y| \leq R}\frac{|\eta_R(x)-\eta_R(y)|^2}{|x-y|^{N+2s}}dx+\int_{|x-y|>R}\frac{|\eta_R(x)-\eta_R(y)|^2}{|x-y|^{N+2s}}dx \bigg)^{N/2s}dy\bigg]^{\frac{2s}{N}}\no\\
&\leq&
\bigg[\int_{B_{2R}\setminus B_R}\bigg(\frac{C}{R^2}\int_{|x-y| \leq R}\frac{|x-y|^2}{|x-y|^{N+2s}}dx+C\int_{|x-y|>R}\frac{1}{|x-y|^{N+2s}}dx\bigg)^{N/2s}dy  \bigg]^{\frac{2s}{N}}\no\\
&\leq&
\bigg[\int_{B_{2R}\setminus B_R}\bigg(\frac{C}{R^2}\int_{B_R}|z|^{2-N-2s}dz+C\int_{B_R^c}\frac{1}{|z|^{N+2s}}dz\bigg)^{N/2s}dy  \bigg]^{\frac{2s}{N}}\no\\
&\leq&
C\bigg(\int_{B_{2R}\setminus B_R}\bigg(\frac{R^{2-2s}}{R^22(1-s)}+\frac{R^{-2s}}{2s} \bigg)^{N/2s}dy  \bigg)^{\frac{2s}{N}}\no\\
&=&C R^{-2s}\bigg(\frac{1}{2(1-s)}+\frac{1}{2s}\bigg)|B_{2R}\setminus B_R|^{\frac{2s}{N}}\no\\
&=& C_2(s, N).
\eea

Next, to estimate $I_{2,2}^2$, we first observe that if for any two sets $U$ and $\Omega$ with $\bar\Omega\subset U$ and  dist$(U^c, \Om)=:\theta>0$, then for any $x\in\Om$ and $y\in U^c$ it holds $|x-y|\geq C_\theta|y|$. Thus,
\bea\label{Ei}
(I_{2,2}^2)^{\frac{2s}{N}}&=&\bigg(\int_{B_{2R}^c}\bigg(\int_{B_R}\frac{|\eta_R(x)-\eta_R(y)|^2}{|x-y|^{N+2s}}dx \bigg)^{N/2s}dy\bigg)^{\frac{2s}{N}}\no\\
&\leq&\bigg(\int_{B_{2R}^c}\bigg(\int_{B_R}\frac{dx}{|x-y|^{N+2s}}\bigg)^{N/2s}dy\bigg)^{\frac{2s}{N}}\no\\
&\leq&C\bigg(\int_{B_{2R}^c}\frac{1}{|y|^{(N+2s)N/2s}}|B_R|^{N/2s}dy\bigg)^{\frac{2s}{N}} \no \\
&=&CR^{N}\bigg(\int_{2R}^{\infty}r^{N-1-\frac{N}{2s}(N+2s)}dr \bigg)^{\frac{2s}{N}}\no\\
&=&C_3(N,s).
\eea
Substituting (\ref{Eh}) and (\ref{Ei}) into (\ref{Ee1}) yields
$I_{2,2} \leq C_2+C_3.$
This together with (\ref{Eg}) along with (\ref{Ef}) gives
\begin{equation}\label{Ee2}
I_2 \leq \int_{B_R\times B_R^c}\frac{|w_0(x)-w_0(y)|^2}{|x-y|^{N+2s}}+C_4R^{-(N-2s)}
\end{equation}
where $C_4=C_1(C_2+C_3).$ Next we estimate $I_3.$
\bea\label{Ej}
I_3&=&\int_{B_R^c \times B_R^c}\frac{|\eta_R(x)w_0(x)-\eta_R(y)w_0(y)|^2}{|x-y|^{N+2s}}dxdy\no\\
&\leq&\int_{B_R^c \times B_R^c}|\eta_R(x)|^2\frac{|w_0(x)-w_0(y)|^2}{|x-y|^{N+2s}}dxdy
+\int_{B_R^c \times B_R^c}|w_0(y)|^2\frac{|\eta_R(x)-\eta_R(y)|^2}{|x-y|^{N+2s}}dxdy\no\\
&\leq&\int_{B_R^c \times B_R^c}\frac{|w_0(x)-w_0(y)|^2}{|x-y|^{N+2s}}dxdy+J_3,
\eea
where 
\bea\label{Ek}
J_3&=&\int_{B_R^c \times B_R^c}|w_0(y)|^2\frac{|\eta_R(x)-\eta_R(y)|^2}{|x-y|^{N+2s}}dxdy\no\\
&\leq&\bigg(\int_{B_R^c}|w_0(y)|^{2^*}dy \bigg)^{\frac{2}{2^*}}\bigg(\int_{B_R^c}\bigg(\int_{B_R^c}\frac{{|\eta_R(x)-\eta_R(y)|^2}}{|x-y|^{N+2s}}dx \bigg)^{\frac{2^*}{2^*-2}}dy \bigg)^{\frac{2^*-2}{2^*}}\no\\
&=&C_1 R^{-(N-2s)}J_{3,1},
\eea

{\bf Claim:} $J_{3,1} \leq C_5.$ 

To see the claim,
\bea\lab{9th Mar-1}
J_{3,1} &=& \bigg[\int_{B_R^c}\bigg(\int_{B_R^c}\frac{{|\eta_R(x)-\eta_R(y)|^2}}{|x-y|^{N+2s}}dx \bigg)^{\frac{N}{2s}}dy \bigg]^{\frac{2s}{N}}\no\\
&=&\bigg[\int_{B_R^c}\bigg(\int_{B_R^c \cap \{ |x-y|\leq R \}}\frac{{|\eta_R(x)-\eta_R(y)|^2}}{|x-y|^{N+2s}}dx +\int_{B_R^c \cap \{ |x-y|> R \}}\frac{{|\eta_R(x)-\eta_R(y)|^2}}{|x-y|^{N+2s}}dx \bigg)^{\frac{N}{2s}}dy \bigg]^{\frac{2s}{N}}\no\\
&\leq& \bigg(J_{3,1}^1+J_{3,1}^2\bigg),
\eea	
where $$J_{3,1}^1=\bigg[\int_{B_R^c} \bigg( \int_{B_R^c \cap \{ |x-y|\leq R \}}\frac{{|\eta_R(x)-\eta_R(y)|^2}}{|x-y|^{N+2s}}dx \bigg)^{\frac{N}{2s}}dy\bigg]^{2s/N}$$ and
	$$J_{3,1}^2=\bigg[\int_{B_R^c} \bigg( \int_{B_R^c \cap \{ |x-y|> R \}}\frac{{|\eta_R(x)-\eta_R(y)|^2}}{|x-y|^{N+2s}}dx \bigg)^{\frac{N}{2s}}dy\bigg]^{2s/N}.$$
Since, $\eta_R(x)=0=\eta_R(y)$ for $|x|\geq 2R,\, |y|\geq 2R$ and $|\na\eta|\leq \frac{2}{R}$, we have
\bea\lab{9th Mar-2}
J_{3,1}^1&=&\bigg[\int_{B_{3R}\setminus B_R} \bigg( \int_{B_R^c \cap \{ |x-y|\leq R \}}\frac{{|\eta_R(x)-\eta_R(y)|^2}}{|x-y|^{N+2s}}dx \bigg)^{\frac{N}{2s}}dy\bigg]^{2s/N}\no\\
&\leq& C\bigg[\int_{B_{3R}\setminus B_R} \bigg( \int_{B_R^c \cap \{ |x-y|\leq R \}}\frac{1}{R^2}|x-y|^{2-N-2s}dx \bigg)^{\frac{N}{2s}}dy\bigg]^{2s/N}\no\\
&\leq&\frac{C}{R^2}\bigg[\int_{B_{3R}\setminus B_R}\bigg(\int_{|z|\leq R}|z|^{2-N-2s}dz \bigg)^{N/2s}dy \bigg]^{2s/N}\no\\
&\leq& C_2.
\eea
Similarly,
\bea\lab{9th Mar-3}
J_{3,1}^2&\leq &\bigg[\int_{B_{2R}\setminus B_R} \bigg( \int_{B_R^c \cap \{ |x-y|> R \}}\frac{{|\eta_R(x)-\eta_R(y)|^2}}{|x-y|^{N+2s}}dx \bigg)^{\frac{N}{2s}}dy\bigg]^{2s/N}\no\\
&&\quad+\bigg[\int_{B_{2R}^c} \bigg( \int_{B_R^c \cap \{ |x-y|> R \}}\frac{{|\eta_R(x)-\eta_R(y)|^2}}{|x-y|^{N+2s}}dx \bigg)^{\frac{N}{2s}}dy\bigg]^{2s/N}\no\\
&=:&J_{3,1}^{2,1}+J_{3,1}^{2,2}.
\eea
Now,
\bea\lab{9th Mar-4}
J_{3,1}^{2,1}&=&\bigg[\int_{B_{2R}\setminus B_R} \bigg( \int_{B_R^c \cap \{ |x-y|> R \}}\frac{{|\eta_R(x)-\eta_R(y)|^2}}{|x-y|^{N+2s}}dx \bigg)^{\frac{N}{2s}}dy\bigg]^{2s/N}\no\\
&\leq&\bigg[\int_{B_{2R}\setminus B_R} \bigg( \int_{\{ |x-y|> R\}}\frac{1}{|x-y|^{N+2s}}dx \bigg)^{\frac{N}{2s}}dy\bigg]^{2s/N}\no\\
&=&C_3.
\eea
Also,
\Bea
J_{3,1}^{2,2}&=&\bigg[\int_{B_{2R}^c} \bigg( \int_{B_R^c \cap \{ |x-y|> R \}}\frac{{|\eta_R(x)-\eta_R(y)|^2}}{|x-y|^{N+2s}}dx \bigg)^{\frac{N}{2s}}dy\bigg]^{2s/N}\no\\
&\leq&\bigg[\int_{B_{2R}^c} \bigg( \int_{(B_{2R}\setminus B_R) \cap \{ |x-y|> R \}}\frac{1}{|x-y|^{N+2s}}dx \bigg)^{\frac{N}{2s}}dy\bigg]^{2s/N}.\no\\
\Eea
By simple computation, it is easy to observe that $y\in B_{2R}^c$ and $x\in (B_{2R}\setminus B_R) \cap \{ |x-y|> R \}$ implies $|x-y|>\frac{1}{3}|y|$.
Therefore,
\be\lab{9th Mar-5}
J_{3,1}^{2,2}\leq \bigg[\int_{B_{2R}^c} \bigg( \int_{(B_{2R}\setminus B_R)}\frac{3^{N+2s}}{|y|^{N+2s}}dx \bigg)^{\frac{N}{2s}}dy\bigg]^{2s/N}=C_4.
\ee

Combining \eqref{9th Mar-2}--\eqref{9th Mar-5} with \eqref{9th Mar-1} proves the claim $J_{3,1}\leq C_5$.

This along with (\ref{Ek}) and (\ref{Ej}) gives 
\begin{equation}\label{El}
I_3 \leq \int_{B_R^c \times B_R^c}\frac{|w_0(x)-w_0(y)|^2}{|x-y|^{N+2s}}dxdy+C_5R^{-(N-2s)}.
\end{equation}
Therefore, substituting (\ref{Ee2}) and (\ref{El}) into (\ref{Ee}) yields
$$\|\eta_R w_0\|_{\dot{H}^s(\Rn)} \leq \int_{\Rn \times \Rn}\frac{|w_0(x)-w_0(y)|^2}{|x-y|^{N+2s}}dxdy+C_7R^{-(N-2s)},$$
that is, $\|\eta_R w_0\|_{\dot{H}^s(\Rn)}^2\leq S_0+O(R^{-(N-2s)}).$

 \begin{lemma}\label{L-1}
 	$0<S_\eps-S_0\To 0$  as $\eps\to 0$.  
	 \end{lemma}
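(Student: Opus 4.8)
The plan is to establish the two assertions separately, working with the scaling--invariant quotients $\mathcal{S}_\eps$ and $\mathcal{S}_0$ from \eqref{M-scale} and \eqref{E7} and using the test--function estimates \eqref{8th Feb-1}--\eqref{8th Feb-2} proved above. For the strict positivity $S_\eps>S_0$, recall that by Proposition \ref{t:ext} (valid for all $q>p>2$, hence in particular for $p>2^*$) the level $S_\eps$ is attained by a radial minimizer $w_\eps\in H^s(\R^N)$ with $2^*\int_{\R^N}F_\eps(w_\eps)\,dx=1$, $w_\eps\not\equiv 0$ and $0\le w_\eps\le 1$; in particular $\|w_\eps\|_{\dot H^s(\R^N)}^2=S_\eps$. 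Since $0\le w_\eps\le 1$ we have $F_\eps(w_\eps)=F_0(w_\eps)-\tfrac{\eps}{2}w_\eps^2$ pointwise, so $2^*\int_{\R^N}F_0(w_\eps)\,dx=1+\eps\tfrac{2^*}{2}\|w_\eps\|_2^2>1$. Thus $w_\eps$ is admissible in \eqref{E8}, and since $\|w_\eps\|_2>0$,
\begin{equation*}
S_0\le\mathcal{S}_0(w_\eps)=\frac{S_\eps}{\bigl(1+\eps\tfrac{2^*}{2}\|w_\eps\|_2^2\bigr)^{\frac{N-2s}{N}}}<S_\eps .
\end{equation*}

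For the vanishing $S_\eps-S_0\to 0$, I use $w_0$ (when $N>4s$) or its truncation $\eta_R w_0$ (when $2s<N\le 4s$) as a test function for $\mathcal{S}_\eps$, where $w_0$ is the radial minimizer of $S_0$ from \eqref{S_0}. If $N>4s$, then $w_0\in L^2(\R^N)$ by \eqref{E1}, hence $w_0\in H^s(\R^N)$; since $0\le w_0\le 1$, $2^*\int_{\R^N}F_\eps(w_0)\,dx=1-\eps\tfrac{2^*}{2}\|w_0\|_2^2>0$ for $\eps$ small, and therefore
\begin{equation*}
S_\eps\le\mathcal{S}_\eps(w_0)=\frac{S_0}{\bigl(1-\eps\tfrac{2^*}{2}\|w_0\|_2^2\bigr)^{\frac{N-2s}{N}}}=S_0+O(\eps).
\end{equation*}
If $2s<N\le 4s$, then $\|\eta_R w_0\|_{\dot H^s(\R^N)}^2\le S_0+O(R^{-(N-2s)})$ by \eqref{8th Feb-2}, while \eqref{8th Feb-1} with $t=p$ and $t=q$ (legitimate since $p,q>2^*>\tfrac{N}{N-2s}$) together with \eqref{E6} gives $\tfrac{2^*}{p}\|\eta_R w_0\|_p^p-\tfrac{2^*}{q}\|\eta_R w_0\|_q^q=1+O(R^{-N})$, because $p(N-2s),q(N-2s)>2N$. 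Since $0\le\eta_R w_0\le 1$, adding the $\eps$--term and invoking \eqref{8th Feb-3} yields, for $R$ large and $\eps$ small,
\begin{equation*}
S_\eps\le\mathcal{S}_\eps(\eta_R w_0)\le\frac{S_0+O(R^{-(N-2s)})}{\bigl(1+O(R^{-N})-\eps\tfrac{2^*}{2}\|\eta_R w_0\|_2^2\bigr)^{\frac{N-2s}{N}}}.
\end{equation*}
Choosing $R=R_\eps\to\infty$ slowly as $\eps\to 0$ --- e.g. $R_\eps=\eps^{-1/2}$ if $N=4s$ and $R_\eps=\eps^{-1/(2(4s-N))}$ if $2s<N<4s$ --- one checks, using \eqref{8th Feb-3}, that $R_\eps^{-(N-2s)}\to 0$, $R_\eps^{-N}\to 0$ and $\eps\|\eta_{R_\eps}w_0\|_2^2\to 0$, so the right--hand side tends to $S_0$; hence $S_\eps=S_0+o(1)$. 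Combined with the first part, this proves the lemma.

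Granting the delicate estimates \eqref{8th Feb-1}--\eqref{8th Feb-2}, the argument is essentially bookkeeping. The only genuinely careful points are, in the range $2s<N\le 4s$, checking that the tail contributions from \eqref{8th Feb-1} are of strictly lower order ($O(R^{-N})$) than the Dirichlet--energy error $O(R^{-(N-2s)})$, and calibrating the cut--off radius $R_\eps$ against $\eps$ so that the $L^2$--mass of the truncation, which grows with $R$, is still annihilated by the factor $\eps$. The strict inequality $S_\eps>S_0$, by contrast, follows immediately once one uses that the minimizer $w_\eps$ is nontrivial, and no compactness is needed here since we only compare the energy levels and not the minimizers themselves.
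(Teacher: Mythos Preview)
Your proof is correct and follows essentially the same approach as the paper: the strict inequality $S_0<S_\eps$ via $\mathcal{S}_0(w_\eps)$, and the convergence $S_\eps\to S_0$ via testing $\mathcal{S}_\eps$ against $w_0$ (or its truncation $\eta_R w_0$ in low dimensions) using the estimates \eqref{8th Feb-1}--\eqref{8th Feb-2}. The only differences are cosmetic: the paper picks $R=\eps^{-1}$ when $N=4s$ and $R=\eps^{-1/2}$ when $2s<N<4s$, obtaining explicit rates $O(\eps\ln\tfrac{1}{\eps})$ and $O(\eps^{(N-2s)/2})$ respectively, whereas you choose different radii and only record $o(1)$, which is all the lemma requires.
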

 
 \begin{proof}
 We note that $$S_0 \leq \mathcal{S}_0(w_\eps)<\mathcal{S}_\eps(w_\eps)=S_\eps.$$	

 {\bf Case 1:} $N>4s.$  
 
 Testing $S_\eps$ against $w_0$, we have
 \Bea
 S_\eps \leq S_\eps(w_0) =\frac{\|w_0\|_{\dot{H}^s(\Rn)}^2}{\bigg(2^*\displaystyle \int_{\Rn}F_\eps(w_0)dx\bigg)^{\frac{N-2s}{N}}}
 &=& \frac{S_0}{\bigg(\frac{2^*}{p}\|w_0\|_p^p-\frac{2^*}{q}\|w_0\|_q^q-\eps\frac{2^*}{2}\|w_0\|_2^2\bigg)^{\frac{N-2s}{N}}}\\
 &=& \frac{S_0}{\bigg(1-\eps\frac{2^*}{2}\|w_0\|_2^2\bigg)^{\frac{N-2s}{N}}}\\
 &\leq& S_0 +O(\eps).
 \Eea
 This proves the lemma for $N>4s$.

{\bf Case 2:} $N=4s.$ 

Let $R=\frac{1}{\eps}.$ Testing $S_{\eps}$ against $\eta_R w_0$ and using \eqref{8th Feb-1}, \eqref{8th Feb-2} and the fact that $p>2^*=\frac{2N}{N-2s}=4,$ we obtain
  \Bea
  S_\eps \leq S_\eps(\eta_Rw_0) 
  &=& \frac{S_0+O(R^{-2s})}{\bigg(\frac{2^*}{p}\|w_0\|_p^p-\frac{2^*}{q}\|w_0\|_q^q-O(R^{4s-2ps})+O(R^{4s-2qs})-\eps O(ln R)\bigg)^{\frac{1}{2}}}\\
  &=& \frac{S_0+O(R^{-2s})}{\bigg(1-O(R^{4s-2ps})+O(R^{4s-2qs})-\eps O(ln R)\bigg)^{\frac{1}{2}}}\\
&\leq& \frac{S_0+O(R^{-2s})}{\bigg(1-O(R^{-4s})-\eps O(ln R)\bigg)^{\frac{1}{2}}}\\
&\leq& \frac{S_0+O(\eps^{2s})}{\bigg(1-O(\eps^{4s})- O(\eps ln \frac{1}{\eps})\bigg)^{\frac{1}{2}}}\\
  &\leq& S_0 +O(\eps ln \frac{1}{\eps}).
  \Eea

{\bf Case 3:} $2s<N<4s.$

In this case, it is easy to check that $p>4$.  Choosing $R=\eps^{-\frac{1}{2}}$ and testing $S_\eps$ against $\eta_Rw_0$ as before, we have
 \Bea
 S_\eps \leq S(\eta_Rw_0) &=&\frac{S_0+O(R^{-(N-2s)})}{\bigg(1-O(R^{4s-2ps})+O(R^{4s-2qs})-\eps O(R^{4s-N})\bigg)^{\frac{N-2s}{N}}}\\
 &\leq& \frac{S_0+O(R^{-(N-2s)})}{\bigg(1-O(R^{-4s})-\eps O( R^{4s-N})\bigg)^{\frac{N-2s}{N}}}\\
 &\leq& \frac{S_0+O(\eps^{\frac{N-2s}{2}})}{\bigg(1-O(\eps^{2s})- O(\eps^{\frac{N-4s}{2}+1})\bigg)^{\frac{N-2s}{N}}}\\
 &\leq& S_0 +O(\eps^{\frac{(N-2s)}{2}}).
 \Eea
This is because $2s<N<4s$ implies, $0<1-s<\frac{N-4s}{2}+1<1.$
Combining all these cases above, we have
\begin{equation}
S_\eps \leq \begin{cases}
S_0+O(\eps),\,\ N>4s\\
S_0+O(\eps ln\frac{1}{\eps}), \,\, N=4s\\
S_0+O(\eps^{\frac{N-2s}{2}}), \,\, 2s<N<4s.
\end{cases}
\end{equation} 
Hence,  $\lim_{\eps \to 0} S_\eps=S_0.$
 \end{proof}
 
 \begin{lemma}\label{L-2}
 	We have $\|w_\eps\|_{\infty} \leq 1$ and $\|w_\eps\|_t \lesssim 1$ for all $t\geq 2^*$, where $w_\eps$ is a minimizer of $(S_\eps).$
 \end{lemma}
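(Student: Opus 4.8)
The plan is to derive both assertions from results already at hand, the only genuinely new ingredient being a one--line maximum--principle computation identical to that of Lemma~\ref{l:max}.

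First, for the bound $\|w_\eps\|_\infty\le 1$: recall that the minimizer $w_\eps$ of \eqref{S-eps} satisfies the Euler--Lagrange equation $(-\De)^s w_\eps=S_\eps f_\eps(w_\eps)$ weakly in $H^s(\R^N)\cap L^q(\R^N)$ (this is \eqref{lag-mult} together with $\theta_\eps=S_\eps$), and that $w_\eps\ge 0$. Since $w_\eps\in H^s(\R^N)\cap L^q(\R^N)$ and $\dot{H}^s(\R^N)$ is a Dirichlet space, the function $(w_\eps-1)^+$ is an admissible test function and, by the contraction property of the Dirichlet form, $\langle w_\eps,(w_\eps-1)^+\rangle_{\dot{H}^s(\R^N)}\ge\|(w_\eps-1)^+\|_{\dot{H}^s(\R^N)}^2$. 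Testing the equation against $(w_\eps-1)^+$ and using that on $\supp\,(w_\eps-1)^+\subset\{w_\eps\ge1\}$ one has $f_\eps(t)=t\,(t^{p-2}-t^{q-2}-\eps)\le 0$ (because $p<q$ and $t\ge 1$), the right-hand side is nonpositive; hence $\|(w_\eps-1)^+\|_{\dot{H}^s(\R^N)}^2\le 0$, so $(w_\eps-1)^+\equiv 0$, i.e. $w_\eps\le 1$ a.e., and since $w_\eps\ge0$ we get $\|w_\eps\|_\infty\le 1$. In fact this is already part of the conclusion of Proposition~\ref{t:ext}, so in the write-up one may simply refer back to it.

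Next, for $\|w_\eps\|_t\lesssim 1$ with $t\ge 2^*$: since $w_\eps$ is a minimizer of $S_\eps$ we have $\|w_\eps\|_{\dot{H}^s(\R^N)}^2=S_\eps$, and $S_\eps$ is bounded uniformly for small $\eps$ --- either because $\eps\mapsto S_\eps$ is nondecreasing, or, more precisely, because $S_\eps\to S_0$ by Lemma~\ref{L-1}. Hence the fractional Sobolev inequality gives $\|w_\eps\|_{2^*}^2\lesssim\|w_\eps\|_{\dot{H}^s(\R^N)}^2=S_\eps\lesssim 1$. Combining this with $w_\eps\le 1$ a.e., for every $t\ge 2^*$ we obtain
\begin{equation*}
\|w_\eps\|_t^t=\int_{\R^N}w_\eps^{\,t-2^*}\,w_\eps^{2^*}\,dx\le\|w_\eps\|_\infty^{\,t-2^*}\,\|w_\eps\|_{2^*}^{2^*}\le\|w_\eps\|_{2^*}^{2^*}\lesssim 1,
\end{equation*}
so that $\|w_\eps\|_t\lesssim 1$, uniformly in $\eps$.

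I do not anticipate a real obstacle: the statement merely packages the weak maximum principle of Section~\ref{s-qual}, the identity $\|w_\eps\|_{\dot{H}^s(\R^N)}^2=S_\eps$, the fractional Sobolev inequality, and the energy bound of Lemma~\ref{L-1}. The only point requiring a little care is the admissibility of $(w_\eps-1)^+$ as a test function and the finiteness of the integrals appearing in the Euler--Lagrange identity; this is handled exactly as in the proof of Lemma~\ref{l:max}, using that $0\le(w_\eps-1)^+\le w_\eps\in H^s(\R^N)\cap L^q(\R^N)$.
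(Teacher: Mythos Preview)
Your proof is correct and follows essentially the same route as the paper. The paper obtains $\|w_\eps\|_\infty\le 1$ by citing the bound $\|u_\eps\|_\infty\le 1$ from Lemma~\ref{l:2} together with the rescaling relation \eqref{u-eps}, while you reproduce the maximum--principle argument directly on the Euler--Lagrange equation for $w_\eps$; for the $L^t$ bound both arguments combine the Sobolev inequality, the energy control $S_\eps\to S_0$ from Lemma~\ref{L-1}, and the pointwise bound $w_\eps\le 1$ in exactly the same way.
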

  \begin{proof}
In view of Lemma \ref{l:2} and \eqref{u-eps}, we have
  	$$\|w_\eps\|_{\infty} =\|u_\eps\|_{\infty} \leq 1.$$ Also, by Sobolev embedding, 
  	$$\|w_\eps\|_{2^*}^2 \leq S_*^{-1}\|w_\eps\|_{\dot{H}^s(\Rn)}^2=S_*^{-1}S_\eps=S_*^{-1}(S_0(1+o(1))).$$
  	Therefore, for $t>2^*$ we have by interpolation,
  	$$\|w_\eps\|_t^t \leq \|w_\eps\|_{2^*}^{2^*} \lesssim C.$$
  	
  \end{proof}	
 
 \begin{lemma}\label{L-3}
 $\lim_{\eps \to 0} \eps\|w_\eps\|_2^2=0.$
 \end{lemma}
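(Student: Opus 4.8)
The plan is to exploit the Nehari and Pohozaev relations for the minimizer $w_\eps$ together with the energy estimate of Lemma \ref{L-1} (i.e. $0<S_\eps-S_0\to 0$), following the same strategy already carried out in the critical case in Lemmas \ref{lem-2}--\ref{lem-3} and Corollary \ref{cor-4}. Recall that by \eqref{lag-mult}--\eqref{theta}, $w_\eps$ satisfies the Nehari identity $S_\eps=S_\eps\big(\|w_\eps\|_p^p-\|w_\eps\|_q^q-\eps\|w_\eps\|_2^2\big)$, hence
\begin{equation}\label{neh-sup}
1=\|w_\eps\|_p^p-\|w_\eps\|_q^q-\eps\|w_\eps\|_2^2,
\end{equation}
and the constraint $2^*\int_{\Rn}F_\eps(w_\eps)\,dx=1$ gives
\begin{equation}\label{con-sup}
\frac{2^*}{p}\|w_\eps\|_p^p-\frac{2^*}{q}\|w_\eps\|_q^q-\eps\frac{2^*}{2}\|w_\eps\|_2^2=1.
\end{equation}

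Eliminating terms between \eqref{neh-sup} and \eqref{con-sup} (exactly as in Lemma \ref{lem-2}, but now $p>2^*$ rather than $p=2^*$) expresses $\|w_\eps\|_p^p$ and $\|w_\eps\|_q^q$ as explicit affine functions of the single quantity $\eps\|w_\eps\|_2^2$; in particular one finds $\|w_\eps\|_p^p = a + b\,\eps\|w_\eps\|_2^2$ and $\|w_\eps\|_q^q = c + d\,\eps\|w_\eps\|_2^2$ with $a,c>0$ the limiting values $\tfrac{(q-2^*)p}{(q-p)2^*}$, $\tfrac{(p-2^*)q}{(q-p)2^*}$ and $b,d>0$ depending only on $p,q,N$. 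Then, since $w_\eps\in\mathcal{M}_0$ (because $F_0\ge F_\eps$ pointwise forces $\int F_0(w_\eps)\,dx\ge \frac1{2^*}>0$), the scaling-invariant quotient satisfies $S_0\le\mathcal S_0(w_\eps)=S_\eps\big(2^*\int F_0(w_\eps)dx\big)^{-(N-2s)/N}$. Using $2^*\int F_0(w_\eps)dx = 2^*\int F_\eps(w_\eps)dx + \eps 2^*\tfrac12\|w_\eps\|_2^2 = 1 + \eps\tfrac{2^*}{2}\|w_\eps\|_2^2$, this reads
\begin{equation}\label{key-sup}
S_0\le \frac{S_\eps}{\big(1+\eps\frac{2^*}{2}\|w_\eps\|_2^2\big)^{(N-2s)/N}}.
\end{equation}

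**Next I would** rearrange \eqref{key-sup} to $\big(1+\eps\frac{2^*}{2}\|w_\eps\|_2^2\big)^{(N-2s)/N}\le S_\eps/S_0 = 1+S_0^{-1}(S_\eps-S_0)$, and apply the elementary inequality $(1+t)^{(N-2s)/N}\ge 1+\frac{N-2s}{N}t$ for $t\ge 0$ (with $(N-2s)/N\in(0,1)$) to obtain
$$
\frac{N-2s}{N}\cdot\eps\frac{2^*}{2}\|w_\eps\|_2^2 \le S_0^{-1}(S_\eps-S_0),
$$
so that $\eps\|w_\eps\|_2^2 \lesssim S_\eps-S_0 \to 0$ by Lemma \ref{L-1}. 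This gives the claim directly; the affine relations from the first step are not strictly needed for this particular lemma but will be used downstream (as in Corollary \ref{cor-4}) to conclude $\|w_\eps\|_p^p\to a$, $\|w_\eps\|_q^q\to c$. **The main obstacle** is essentially bookkeeping rather than conceptual: one must make sure that $w_\eps$ genuinely lies in $\mathcal{M}_0$ and that $\mathcal S_0$ is finite and admits $w_0$ as minimizer — both of which are available from the setup preceding the lemma — and that the strict inequality $S_0<S_\eps$ is used consistently with the correct direction; the only real subtlety, namely getting a matching \emph{upper} bound on $S_\eps-S_0$ in the low-dimensional range $2s<N\le 4s$, has already been dispatched in Lemma \ref{L-1} via the cut-off estimates \eqref{8th Feb-1}--\eqref{8th Feb-2}, so here one may simply invoke it as a black box.
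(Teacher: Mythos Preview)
Your derivation of the key inequality \eqref{key-sup},
\[
S_0\le \frac{S_\eps}{\big(1+\eps\tfrac{2^*}{2}\|w_\eps\|_2^2\big)^{(N-2s)/N}},
\]
is exactly what the paper obtains (its equation \eqref{E10}), so the approach is the same. The paper then finishes by a short contradiction argument: if along a subsequence $\eps_n\|w_{\eps_n}\|_2^2\to m>0$, then $S_0\le S_0/(1+\tfrac{2^*}{2}m)^{(N-2s)/N}<S_0$.

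Your final step, however, contains a genuine error. You invoke the inequality $(1+t)^{(N-2s)/N}\ge 1+\tfrac{N-2s}{N}t$ for $t\ge 0$ with exponent $(N-2s)/N\in(0,1)$. This inequality is \emph{false}: for $\alpha\in(0,1)$ the function $t\mapsto(1+t)^\alpha$ is concave, so Bernoulli/concavity gives $(1+t)^\alpha\le 1+\alpha t$, the reverse of what you wrote. With the inequality in the wrong direction, your chain does not yield the claimed bound.

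The fix is immediate and keeps your direct (quantitative) argument: from \eqref{key-sup} raise both sides to the power $N/(N-2s)$ to get
\[
1+\eps\tfrac{2^*}{2}\|w_\eps\|_2^2\le \Big(\tfrac{S_\eps}{S_0}\Big)^{N/(N-2s)}=\big(1+S_0^{-1}(S_\eps-S_0)\big)^{N/(N-2s)},
\]
and since the right-hand side tends to $1$ by Lemma~\ref{L-1}, you conclude $\eps\|w_\eps\|_2^2\to 0$ (and in fact $\eps\|w_\eps\|_2^2\lesssim S_\eps-S_0$, as you intended). Alternatively, just use the paper's two-line contradiction. The preliminary Nehari/constraint algebra you set up is correct but, as you note, not needed for this particular lemma.
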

 \begin{proof}
 Since $w_\eps$ is a minimizer of $({S_\eps})$ we have,
 \bea\label{E9}
 1= 2^* \int_{\Rn} F_\eps(w_\eps)dx= 2^* \int_{\Rn}F_0(w_\eps)dx-\frac{2^*}{2}\eps\|w_\eps\|_2^2
 \eea
 Hence,
 \bea\label{E10}
 S_0(w_\eps)=\frac{\|w_\eps\|_{\dot{H}^s(\Rn)}^2}{\bigg(2^*\displaystyle\int_{\Rn}F_0(w_\eps)dx\bigg)^{\frac{N-2s}{N}}}= \frac{S_\eps}{\bigg(1+\frac{2^*}{2}\eps\|w_\eps\|_2^2 \bigg)^{\frac{N-2s}{N}}}
 \eea
 Suppose $\lim_{\eps \to 0}\eps \|w_\eps\|_2^2 \neq 0.$ Then, by sequential criterion of limit, there exists a sequence $(\eps_n)_{n \geq 1}$ such that $\lim_{n \to \infty} \eps_n=0$ but $\lim_{n \to \infty}\eps_n\|w_{\eps_n}\|_2^2=m>0.$
  Then by lemma \ref{L-1},  we have
  \begin{equation}\label{E11}
  \lim_{n \to \infty} S_0(w_{\eps_n})=\frac{S_0}{\bigg(1+\frac{2^*}{2}m \bigg)^{\frac{N-2s}{N}}}.
  \end{equation}
Then by Lemma \ref{L-1},
  \Bea
  S_0 \leq S_0(w_{\eps_n})=\frac{S_{\eps_n}}{\bigg(1+\frac{2^*}{2}\eps_n\|w_{\eps_n}\|_2^2  \bigg)^{\frac{N-2s}{N}}}= \frac{S_0(1+o(1))}{\bigg(1+\frac{2^*}{2}m \bigg)^{\frac{N-2s}{N}}}<S_0,
  \Eea
  which is a contradiction. Hence the lemma follows.
  \end{proof}
 
\subsection{ Proof of Theorem \ref{t:supcri}} \begin{proof}
 Let us consider a sequence $(\eps_n)_{n \geq 1}$ such that $\lim_{n \to \infty}\eps_n=0.$ Let $(w_\eps)_{\eps>0}$ be the corresponding minimizer for $(S_\eps),$. Therefore, by lemma \ref{L-1} 
 \begin{equation}\label{E13}
 \|w_{\eps_n}\|_{\dot{H}^s(\Rn)}^2=S_{\eps_n} \to S_0 \quad\mbox{as}\quad n \to \infty.
 \end{equation}
 Consequently, there exists a subsequence of $(\eps_n)$ (still denoted by $\eps_n$) such that
 \begin{equation}\label{E14}
 w_{\eps_n} \deb \bar{w} \quad\mbox{in}\quad \dot{H}^s(\Rn), \quad\text{and}\quad
  w_{\eps_n} \to \bar{w} \quad\mbox{a.e. in}\quad \Rn,
 \end{equation}	
 for some radial function $\bar{w} \in \dot{H}^s(\Rn)$.
 
 By Lemma \ref{L-2}, the sequence $(w_{\eps_n})_{n \geq 1}$ is bounded in $L^{2^*}(\Rn)$ and $L^{\infty}(\Rn).$
 
For $(\eps_n)_{n \geq 1}$ sufficiently small (that is, $n$ large enough), using Lemma \ref{l:1} and Sobolev embedding we obtain
 \bea\label{E15}
 w_{\eps_n}(x)\leq c|x|^{-\frac{(N-2s)}{2}} S_*^{-1/2}\|w_{\eps_n}\|_{\dot{H}^s(\Rn)}\leq c|x|^{-\frac{(N-2s)}{2}}S_*^{-1/2}S_0^{1/2}.
 \eea
Hence by Lemma \ref{l:1}, we conclude that
 	\begin{equation}\label{E16}
 	w_{\eps_n} \to \bar{w} \quad\mbox{in}\quad L^t(\Rn) \quad \mbox{for any}\quad t \in (2^*, \infty).
 	\end{equation}
 Further, using Lemma \ref{L-3} and \eqref{E9}, we have
 	\Bea
 	\int_{\Rn}F_0(\bar{w})dx=\lim_{n \to \infty} \int_{\Rn}F_0(w_{\eps_n})dx
 	=\lim_{n \to \infty} \bigg(\frac{1}{2^*}+\frac{\eps_n}{2}\|w_{\eps_n}\|_2^2 \bigg)
 	=\frac{1}{2^*}.
 	\Eea
 	
 	Therefore,
 	\begin{equation}\label{E17}
 	2^*\int_{\Rn}F_0(\bar{w})dx=1.
 	\end{equation}
 Thanks to weak lower semi-continuity, we also have
 	$$\|\bar{w}\|_{\dot{H}^s(\Rn)}^2 \leq \liminf_{n \to \infty} \|w_{\eps_n}\|_{\dot{H}^s(\Rn)}^2=S_0.$$
 	Hence, $\bar{w}$ is a minimizer for $S_0.$ Next, we claim that $w_{\eps_n} \to \bar{w}$ in $\dot{H}^s(\Rn).$
		
To see this, we note that as $w_{\eps_n} \deb \bar{w}$ weakly in $\dot{H}^s(\Rn),$ it follows
\Bea
\|w_{\eps_n}-\bar{w}\|^2_{\dot{H}^s(\Rn)}&=&\|w_{\eps_n}\|_{\dot{H}^s(\Rn)}^2+\|\bar{w}\|_{\dot{H}^s(\Rn)}^2-2\<w_{\eps_n},\bar{w}\>_{\dot{H}^s(\Rn)}\\
&=&S_{\eps_n}+\|\bar{w}\|_{\dot{H}^s(\Rn)}^2-2\|\bar{w}\|_{\dot{H}^s(\Rn)}^2+o(1)\\
&=&S_{\eps_n}-\|\bar{w}\|_{\dot{H}^s(\Rn)}^2+o(1)\\
&=&S_{\eps_n}-S_0+o(1)=o(1),
\Eea
where in the last line, we have used \eqref{E17} and the fact that $\bar{w}$ is a minimizer for $S_0,$ that is, $\|\bar{w}\|_{\dot{H}^s(\Rn)}=S_0.$
 Hence, $w_{\eps_n} \to \bar{w}$ strongly in $\dot{H}^s(\Rn).$	

By Sobolev embedding, $w_{\eps_n} \to \bar{w}$ strongly in $L^{2^*}(\Rn).$ As $(w_{\eps_n})$ is bounded in $L^{\infty}(\Rn),$ by interpolation we conclude $w_{\eps_n} \to \bar{w}$ in $L^t(\Rn)$ for $2 \leq t< \infty. $ From this, using elliptic regularity theory as in Corollary \ref{cor-d}, we conclude that $w_{\eps_n} \to \bar{w}$ in $C^{2s-\delta}(\Rn)$, for some $\delta>0$.
\end{proof}

\section{ Asymptotic behavior in the subcritical case  $2<p<2^*$}
{\bf Proof of Theorem \ref{t:sub}}\begin{proof}
As discussed in Section 3.2, to understand the asymptotic behavior of the ground state solution $u_{\eps}$ of $(P_{\eps})$ in the subcritical case, we consider the rescaling in \eqref{v}, which transforms  $(P_{\eps})$ to  $(\tilde{P}_{\eps})$ with the associated limit problem is given by \eqref{R-0} as $\eps\to 0$.

 Let $G_\eps:\R \to \R$ be a bounded function such that
 \begin{equation}
  G_\eps(w)=\begin{cases}
             \f{1}{p}w^p-\f{1}{2}w^2-\f{\eps^{\al}}{q}w^q,\,\quad\mbox{if}\quad w \geq 0,\\
             0, \,\,\quad\mbox{if}\quad w \leq 0.
            \end{cases}
\end{equation}
where $\al=\f{(q-2)}{s(p-2)}-1>0.$\\
Let
\begin{equation}
  \bar{G}(w)=\begin{cases}
             \f{1}{p}w^p-\f{\eps^{\al}}{q}w^q,\,\quad\mbox{if}\quad w\geq 0,\\
             0, \,\,\quad\mbox{if}\quad w \leq 0.
            \end{cases}
\end{equation}
Note that $G_\eps(w) \leq \bar{G}(w)$ for all $w \in \R.$ Also,  $\bar{G}'(w)=0$ implies $w=\eps^{\f{-\al}{q-p}}$ or $w=0$ and
\Bea
\bar{G}''(w)\big|_{w=\eps^{\f{-\al}{q-p}}}= (p-1)w^{p-2}-\eps^{\al}(q-2)w^{q-2}
= (p-q)w^{p-2}<0.
\Eea
Therefore, $\bar{G}$ attains its maximum at $w=\eps^{\f{-\al}{q-p}}.$
Hence,
$$G_\eps(w) \leq \bar{G}(w) \leq \bar{G}(\eps^{\f{-\al}{q-p}})=\bigg(\f{1}{p}-\f{1}{q}\bigg)\eps^{\f{-\al p}{(q-p)}}=C_\eps \quad\forall \ w\in\R.$$
We also note that $\bar{G}(w)\leq 0 \quad\mbox{if}\quad w \geq \big(\f{q}{p}\eps^{-\al}\big)^{\f{1}{q-p}}.$ Consequently,
$$G_\eps(w) \leq \bar{G}(w) \leq 0 \quad\mbox{if}\quad w \geq \big(\f{q}{p}\big)^{\f{1}{q-p}}\eps^{\f{-\al}{q-p}}.$$

Consider the family of constrained minimization problems
\begin{equation}\label{S-eps'}
(S_{\eps}')\left\{ \begin{aligned}
 S_\eps':=\inf\bigg\{ \|w\|_{\dot{H}^s(\Rn)}^2   : w \in H^s(\Rn),\quad 2^*\int_{\Rn}G_\eps(w)dx=1\bigg\}.\\
 \end{aligned}
 \right.
\end{equation}
Using the same technique as in the proof of Theorem \ref{t:ext}, it can be shown that the problem $(S_{\eps}')_{0 \leq \eps <\eps^*}$ are
wellposed in $H^s(\Rn)$, for some $\eps^*>0$. Towards this, we first prove the following claim:\\
{\bf Claim 1:} There exists $\eps^*>0$ such that for $\eps\in(0,\eps^*)$, the set $$\bigg\{w \in H^s(\Rn): \, 2^*\displaystyle\int_{\Rn}
G_\eps(w)dx=1\bigg\}$$ is non-empty.\\
To see this, let $G_0(w):=\f{1}{p}w^p-\f{1}{2}w^2.$ Note that $G_0(w) \to \infty$ as $w \to \infty$. i.e.,  given $M\in\R^+$, there exists $L>0$ such that $G_0(w) \geq M \quad\forall\ w \geq L$. Thus
$G_\eps(w)= G_0(w)-\f{1}{q}\eps^{\al}w^q\geq M-\f{1}{q}\eps^{\al}w^q$, for $w\geq L$.
Now, for $L\leq w \leq 2L,$ we have $G_\eps(w) \geq M-\eps^{\al}\f{2^q L^q}{q}.$ Hence for $0<\eps<\big(\f{Mq}{2^q L^q}\big)^{\f{1}{\al}}=:\eps^*$, we have $G_\eps(w)>0$ for all $w \in [L,2L].$ Taking $\zeta=\f{3L}{2}$, we get existence of $\zeta$ such that $G_\eps(\zeta)>0$ when $\eps \in (0,\eps^*)$.  Now following the same arguments as in \cite[p.325]{BL}, the proof of Claim 1 can be completed. 

Next,  following the same steps as in the proof of Proposition \ref{t:ext} by considering $G_\eps$ instead of $F_\eps$, it can be shown  that $(S_\eps')$ is well posed and $(\tilde{P}_{\eps})$ admits a  positive, radially symmetric and radially decreasing minimizer $w_{\eps}'$ for every $\eps\in[0,\eps^*)$.

Define $$S_{\eps}'(w)=\f{\|w\|_{\dot{H}^s(\Rn)}^2  }{\bigg(2^*\displaystyle\int_{\Rn}G_\eps(w)dx\bigg)^{\f{(N-2s)}{N}}}, \quad w \in \mathcal{M}_{\eps}',$$
where
$$\mathcal{M}_{\eps}':=\{0 \leq u \in H^s(\Rn):\int_{\Rn}G_\eps(w)dx >0\}.$$
Therefore,
$$S_{\eps}'=\inf_{w \in \mathcal{M_{\eps}'}}S_{\eps}'(w).$$
By well-posedness of $\{S_{\eps}'\}_{0 \leq \eps<\eps^*},$ let us denote the minimizer of $(S_{\eps}')$ as $w_{\eps}'.$
Note that,
$$
G_\eps(w_\eps')=\f{1}{p}|w_\eps'|^p-\f{1}{2}|w_\eps'|^2-\f{\eps^{\f{(q-2)}{s(p-2)}-1}}{q}|w_\eps'|^q \leq \f{1}{p}|w_\eps'|^p-\f{1}{2}|w_\eps'|^2=:G_0(w_\eps').
$$
Therefore,
$$S_0'(w_\eps'):= \f{\|w'_{\eps}\|_{\dot{H}^s(\Rn)}^2}{\bigg(2^*\displaystyle\int_{\Rn}G_0(w_0)dx\bigg)^{\f{(N-2s)}{N}}}
\leq \f{\|w'_{\eps}\|_{\dot{H}^s(\Rn)}^2 }{\bigg(2^*\displaystyle\int_{\Rn}G_\eps(w_\eps')dx\bigg)^{\f{(N-2s)}{N}}}= S_{\eps}'(w_\eps').$$
Moreover, as $w_\eps'$ is a minimizer for $S'_\eps,$ we have
\begin{equation}\label{S_0'}
 S'_0 \leq S_0'(w_\eps')\leq S'_{\eps}(w_\eps')=S'_{\eps}.
\end{equation}
Let $w_0'$ denote the corresponding minimizer for $S_0'$. By continuity, $w_0' \in \mathcal{M}'_{\eps}$ for $\eps>0$ sufficiently small. Therefore using $w_0'$ as a test function for $S_{\eps}'$, for sufficiently small $\eps>0$ we have
\Bea
S_{\eps}' \leq S_{\eps}'(w_0')&=& \f{\|w'_{0}\|_{\dot{H}^s(\Rn)}^2}{\bigg(2^*\displaystyle\int_{\Rn}G_\eps(w_0')dx\bigg)^{\f{(N-2s)}{N}}}\\
&=& \f{S_0'}{\bigg(2^*\displaystyle\int_{\Rn}G_0(w_0')dx-\f{2^*}{q}\eps^{\f{(q-2)-s(p-2)}{s(p-2)}}\|w_0'\|_q^q\bigg)^{\f{(N-2s)}{N}}}\\
&=& \f{S_0'}{\bigg(1-\f{2^*}{q}\eps^{\f{(q-2)-s(p-2)}{s(p-2)}}\|w_0'\|_q^q\bigg)^{\f{(N-2s)}{N}}}\\
&\leq& S'_0 + O\big(\eps^{\f{(q-2)-s(p-2)}{s(p-2)}}\big).\\
\Eea
Therefore, $S_{\eps}' \to S_0'$ as $\eps \to 0.$ Since $w_\eps'$ is a minimizer of $S_{\eps}'$, we have
\begin{equation}\label{1}
 1=\|w_\eps'\|_p^p-\|w_\eps'\|_2^2-\eps^{\al}\|w_\eps'\|_q^q,
\end{equation}
Moreover, following the same argument as in \eqref{Nehari} and \eqref{theta} yields\begin{equation}\label{2}
 1=\f{2^*}{p}\|w_\eps'\|_p^p-\f{2^*}{2}\|w_\eps'\|_2^2-\f{2^*\eps^\al}{q}\|w_\eps'\|_q^q,
\end{equation}
where $\al=\f{(q-2)-s(p-2)}{s(p-2)}$. Combining \eqref{1} and \eqref{2}, we obtain
$$\f{2^*}{q}-1=2^*\big(\f{1}{q}-\f{1}{p}\big)\|w_\eps'\|_p^p-2^*\big(\f{1}{q}-\f{1}{2}\big)\|w_\eps'\|_2^2,$$
and this implies,
\begin{equation}\label{3}
 \|w_\eps'\|_p^p=\f{p}{2}\f{(q-2)}{(q-p)}\|w_\eps'\|_2^2+\f{p}{2^*}\f{(q-2^*)}{(q-p)}.
\end{equation}
Using \eqref{3} and the fact that $S_{\eps}' \to S_0'$ as $\eps \to 0,$ it can be easily shown that 
$$\lim_{\eps \to 0}\|w_\eps'\|_2^2=\f{2(2^*-p)}{2^*(p-2)}, \quad \lim_{\eps \to 0}\|w_\eps'\|_p^p=\f{(2^*-2)p}{(p-2)2^*}.$$
This in turn implies, $\lim_{\eps \to 0}2^* \displaystyle\int_{\Rn}G_0(w_\eps')dx=1$.
Hence, there exists $\left(\la_\eps\right)_{\eps\geq0}$ such that $2^* \displaystyle\int_{\Rn}G_0(\tilde{w}_\eps)dx=1$
where $$\tilde{w}_\eps:=w_\eps'(\la_\eps x) \quad\text{and}\quad \lim_{\eps \to 0}\la_\eps=1.$$
Consequently, a direct computation yields \be\lab{5-16-1}\lim_{\eps \to 0}S_{\eps}'(\tilde{w}_\eps)=S_0'.\ee On the other hand, \be\lab{5-16-2}S_0'\leq S_0'(\tilde{w}_{\eps})\leq S_{\eps}'(\tilde{w}_{\eps}).\ee Combining \eqref{5-16-1} and \eqref{5-16-2}, we obtain $\lim_{\eps\to 0} S_0'(\tilde{w}_{\eps})=S_0'$, i.e.,
$\left(\tilde{w}_\eps\right)_{\eps\geq0}$ is a minimizing sequence for $S_0'$ which satisfies the constraint \be\lab{5-16-3}2^* \int_{\Rn}G_0(\tilde{w}_\eps)dx=1.\ee

\vspace{2mm}

{\bf Claim}: $w_\eps' \to w_0'$ in $H^s(\Rn)$ as $\eps \to 0.$\\

To see the claim, let $\eps_{n}\to 0$ as $n\to\infty.$ As $(\tilde{w}_\eps)$ is a minimizing sequence for $(S_0'),$
we have $\|\tilde w_{\eps_n}\|_{\dot{H}^s(\Rn)}^2\to S_0'$ as $n\to\infty$. Consequently by Sobolev inequality, there exists $C>0$ such that
$ \|\tilde{w}_{\eps_n}\|_{2^*}\leq C$, for all $n\geq 1$. Also, from \eqref{5-16-3} we have
$$ \frac{2^*}{p}\|\tilde{w}_{\eps_n}\|_p^p-\frac{2^*}{2}\|\tilde{w}_{\eps_n}\|_2^2=1,\quad \forall\ n\geq 1.$$
Therefore we have $\|\tilde{w}_{\eps_n}\|_2^2< \|\tilde{w}_{\eps_n}\|_p^p.$
Using interpolation, we conclude that
\begin{align*}
 \|\tilde{w}_{\eps_n}\|_2^2<\|\tilde{w}_{\eps_n}\|_p^p \leq
 \|\tilde{w}_{\eps_n}\|_2^{\theta p}\|\tilde{w}_{\eps_n}\|_{2^*}^{(1-\theta)p}, \quad\text{where}\quad \f{1}{p}=\frac{\theta}{2}+\f{1-\theta}{2^*}. \end{align*}
Hence 
$\|\tilde{w}_{\eps_n}\|_2\leq C$ for all $n\geq 1$. Since $\|\tilde{w}_{\eps_n}\|^2_{H^s(\Rn)}=\|\tilde{w}_{\eps_n}\|^2_{2}+[\tilde{w}_{\eps_n}]^2_{\dot{H^s}(\Rn)},$
we have $(\tilde{w}_{\eps_n})_{n\geq 1}$ is bounded in $H^s(\Rn).$
Therefore, there exists $\tilde{w} \in H^s(\Rn)$ such that $$\tilde{w}_{\eps_n} \rightharpoonup \tilde{w} \quad\text{in}\quad H^s(\Rn) \quad\text{and}\quad \tilde{w}_{\eps_n} \to \tilde{w} \quad\text{a.e. in}\,\Rn,$$ upto a subsequence. We know from \cite[Lemma 6.1]{BM},
that $H^s_{rad}(\Rn)\hookrightarrow L^t(\Rn)$ compactly for $2\leq t<2^*$.
Hence, as $\tilde{w}_{\eps_n}$ are positive, symmetric and decreasing , so $\tilde{w}_{\eps_n} \to \tilde{w}$ in $L^2(\Rn)$ strongly
as $n\to\infty.$ As $2<p<2^*,$ using interpolation, we have, $\tilde{w}_{\eps_n} \to \tilde{w}$ in $L^p(\Rn)$ as $n \to \infty.$
Therefore, we have
\be\lab{5-16-5}2^*\int_{\Rn} G_0(\tilde{w})=1.\ee
By weak lower semicontinuity of norm, we also have
\be\lab{5-16-6} \|\tilde{w}\|^2_{H^s(\Rn)} \leq\varliminf_{n\to\infty} \|\tilde{w}_{\eps_n}\|^2_{H^s(\Rn)}\quad\text{and}\quad  [\tilde{w}]^2_{\dot{H^s}(\Rn)} \leq\varliminf_{n\to\infty} [\tilde{w}_{\eps_n}]^2_{\dot{H^s}(\Rn)}.\ee
Using \eqref{5-16-5} and \eqref{5-16-6},  we have
$$S_0'\leq S_0'(\tilde{w})=\|\tilde w\|_{\dot{H}^s(\Rn)}^2
\leq \lim_{n\to\infty}\|\tilde w_{\eps_n}\|_{\dot{H}^s(\Rn)}^2=\lim_{n\to\infty} S_0'(\tilde{w}_{\eps_n})=S_0'.$$
Consequently,
$$ S_0'=S_0'(\tilde{w})= \|\tilde w\|_{\dot{H}^s(\Rn)}^2= \lim_{n\to\infty}\|\tilde w_{\eps_n}\|_{\dot{H}^s(\Rn)}^2.$$
Hence, $\lim_{n \to \infty}\|\tilde{w}_{\eps_n}\|^2_{H^s(\Rn)}=\|\tilde{w}\|^2_{H^s(\Rn)}.$ Combining this with the weak convergence
of $\tilde{w}_{\eps_n}$, we conclude that $\tilde{w}_{\eps_n} \to \tilde{w}$
strongly in $H^s(\Rn)$ as $n\to\infty$. In view of convergence of $\la_\eps$ we have,
$w_{\eps_n}' \to \tilde{w}$ in $H^s(\Rn)$ where $\tilde{w}$ is a minimizer of $S_0'.$ Therefore, By uniqueness of minimizer of \eqref{R-0}
we have, $\tilde{w}=w_0'$. Hence the claim follows.

Finally, arguing as in the proof of Lemma \ref{lem-c}, using $\|w_{\eps_n}'\|_{2^*}$ instead of $L^q$ norm to control the growth of $w_{\eps_n}' $ at the origin, we also conclude that  $\|w_{\eps_n}'\|_{\infty} \lesssim 1$ as $\eps \to 0.$
Using the above claim we have,
$$w_{\eps_n}' \to w_0' \quad\mbox{in}\quad L^2(\Rn) \quad\mbox{as}\quad n \to \infty.$$
Now, let $2<r < \infty. $ By interpolation we have,
$$
\|w_{\eps_n}'-w_0'\|_r \leq \|w_{\eps_n}'-w_0'\|_2^{\theta}\|w_{\eps_n}'-w_0'\|^{1-\theta}_{\infty}\leq \|w_{\eps_n}'-w_0'\|_2^{\theta}\big(\|w_{\eps_n}'\|_{\infty}+\|w_0'\|_{\infty}\big)^{1-\theta}\leq C\|w_{\eps_n}'-w_0'\|_2^{\theta} \to 0,$$
as $n \to \infty$, where  $\f{1}{r}=\f{\theta}{2}$.
Hence, $w_{\eps_n}' \to w_0'$ in $L^r(\Rn)$ for $2 \leq r < \infty$. Now, choose $r$ such that $\max\{2, \f{N}{2s}\}<r<\infty$.   Then, invoking \cite[Theorem 1.6(iii)]{Ros-Ser} we have
 $$[w_{\eps_n}'-w_0']_{C^{\al}(\Rn)} \leq C\|w_{\eps_n}'-w_0'\|_r\to 0,$$ where $\al=2s-\f{N}{r}$. Set, $\de:=\f{N}{r}$, which implies
 $$w_{\eps_n}' \to w_0'  \quad\mbox{in}\quad C^{2s-\de}(\Rn).$$
\end{proof}

\section{ Local uniqueness in the subcritical case $p<2^*$}
{\bf Proof of Theorem \ref{t:uni-sub}}\begin{proof}
We prove the theorem by method of contradiction. Suppose there exists a sequence $\eps_n\to 0$ and two distinct functions $u_n^1:=u_{\eps_n}^1$ and $u_n^2:=u_{\eps_n}^2$  solve $(P_{\eps})$. Now, define
$$v_n^i:=\eps_n^{-\f{1}{s(p-2)}}u_n^i(\eps_n^{\f{-1}{2s}}x), \quad i=1,2.$$ Then by the given hypothesis of the theorem,  we have
\be\lab{5-18-1}
\|v_n^i-v_0\|_{H^s(\Rn)\cap C^{2s-\si}(\Rn)}\to 0, \quad\text{as}\quad n\to\infty, \quad i=1, 2,\ee for some $\si\in(0,2s)$. Here $v_0$ is the unique positive ground state solution of \eqref{R-0}.
Define, $$w_n:=\f{v_n^1-v_n^2}{\|v_n^1-v_n^2\|_{\infty}} .$$
Therefore, $w_n$ satisfies
$$(-\De)^s w_n+w_n=\f{1}{\|v_n^1-v_n^2\|_{\infty}}[(v_n^1)^{p-1}-(v_n^2)^{p-1}]-\f{\eps_n^{\f{q-2}{s(p-2)}-1}}{\|v_n^1-v_n^2\|_{\infty}}[(v_n^1)^{q-1}-(v_n^2)^{q-1}].$$
It is easy to check that
$$(v_n^1)^{p-1}(x)-(v_n^2)^{p-1}(x)=(p-1)\int_0^1\big(t v_n^1(x)+(1-t)v_n^2(x)\big)^{p-2}(v_n^1(x)-v_n^2(x))dt.$$
Thus $w_n$ solves the following equation
\be\lab{5-18-3}
(-\De)^s w_n+w_n=\big(c_n^1(x)-\eps_n^{\f{q-2}{s(p-2)}-1}c_n^2(x)\big)w_n \quad\text{in}\quad\Rn,
\ee
where $$c_n^1(x):=(p-1)\int_0^1\big(t v_n^1(x)+(1-t)v_n^2(x)\big)^{p-2}dt$$and
$$c_n^2(x):=(q-1)\int_0^1\big(t v_n^1(x)+(1-t)v_n^2(x)\big)^{q-2}dt.$$
Moreover, as $s\in(0,1)$, we have  $p>2\implies p>s(p-2)+2$ and therefore, $q>p>s(p-2)+2$. Thus,
\be\lab{5-19-1}
c_n^1(x)\to(p-1)v_0^{p-2} \quad\text{and}\quad \eps_n^{\f{q-2}{s(p-2)}-1}c_n^2(x)\to 0.\ee
Since, $v_0$ is positive, radially symmetric and radially decreasing to 0 (see \cite{FLS}) and $v_n^i\to v_0$ in $C^{\alpha}(\Rn)$, $i=1,2$ for some $\al>0$ and $\|w_n\|_{\infty}=1$, we get $(-\De)^s w_n$ is uniformly bounded in $L^{\infty}(\Rn)$. Therefore, applying Schauder estimate \cite{RS1}, we have
\be\lab{5-19-2}
w_n\to w \,\, \text{in}\quad C^{2s-\de}_{loc}(\Rn), \quad\text{for some}\quad \de\in (0,2s).
\ee

\vspace{2mm}

{\bf Claim 1:} $\{w_n\}$ is uniformly bounded in $H^s(\Rn)$.

\vspace{2mm}

To see the claim, first we choose $\ba>0$ small such that $\ba<\f{1}{2}(p-2)^2$. Then using Sobolev inequality and \eqref{5-18-3}, we find
\bea\lab{5-19-3}
C\bigg(\int_{\Rn}w_n^{p}dx\bigg)^\f{2}{p}\leq \|w_n\|^2_{H^s(\Rn)}&=&\int_{\Rn}c_n^1(x)w_n^2dx-
\eps_n^{\f{q-2}{s(p-2)}-1}\int_{\Rn}c_n^2(x)w_n^2dx\no\\
&\leq&\int_{\Rn}c_n^1(x)w_n^{2-\ba}dx\no\\
&\leq&\bigg(\int_{\Rn}w_n^{p}dx\bigg)^\f{2-\ba}{p}\bigg(\int_{\Rn}(c_n^1(x))^\f{p}{p-2+\ba}dx\bigg)^\f{p-2+\ba}{p}.
\eea
This in turn implies \be\lab{5-19-4}
\int_{\Rn}w_n^{p}dx\leq C \bigg(\int_{\Rn}(c_n^1(x)dx)^\f{p}{p-2+\ba}\bigg)^\f{p-2+\ba}{\ba}.
\ee
Observe that, by Theorem \ref{t:rad}, we have $u_n^1$ and $u_n^2$ are radially symmetric and symmetric decreasing and so are $v_n^i, \, i=1,2$. Thus using Lemma \ref{l:1}, for $R>0$ large enough, we obtain
\be\lab{5-19-5}
\int_{|x|>R}(c_n^1(x))^\f{p}{p-2+\ba}dx\leq C\int_{|x|>R}\f{dx}{|x|^{\f{N}{2}(p-2)\f{p}{p-2+\ba}}}\leq C\int_{R}^{\infty}r^{N-1-\f{N}{2}(p-2)\f{p}{p-2+\ba}}dr<\infty.
\ee
On the other hand, as $v_n^i\to v_0$ in $C^{\alpha}(\Rn)$, $i=1,2$ for some $\al>0$ and $v_0$ is a positive, radially symmetric and radially decreasing function, it immediately follows from the definition of $c_n^1$ that
\be\lab{5-19-6}
\int_{|x|\leq R}(c_n^1(x))^\f{p}{p-2+\ba}dx\leq C
\ee
Combining \eqref{5-19-5} and \eqref{5-19-6} along with \eqref{5-19-4} we find a constants $C, C'>0$ such that
\be\lab{5-19-7}
\int_{\Rn}(c_n^1(x))^\f{p}{p-2+\ba}dx\leq C \quad\text{and}\quad \int_{\Rn}w_n^{p}\,dx\leq C'.
\ee
Plugging \eqref{5-19-7} into \eqref{5-19-3}, the claim follows.

Combining the Claim 1 along with \eqref{5-19-2}, we have $w\in H^s(\Rn)$ and $w_n\deb w$ in $H^s(\Rn)$.
On the other hand, from \eqref{5-18-3} we also have
$$\int_{\Rn}(-\De)^\f{s}{2}w_n(-\De)^\f{s}{2}\va dx+\int_{\Rn}w_n\va dx=\int_{\Rn}\big(c_n^1(x)-\eps_n^{\f{q-2}{s(p-2)}-1}c_n^2(x)\big)w_n\va dx,$$ for all $\va\in C^{\infty}_0(\Rn)$. Using the analysis done above, we can take the limit $n\to\infty$ both the sides and passing the limit we obtain
\bea\lab{5-19-8}
(-\De)^s w+w &=&(p-1)v_0^{p-2}w \quad\text{in}\quad\Rn,\no\\
w &\in& H^s(\Rn).
\eea
On the other hand, since $v_0$ is the unique ground state solution of \eqref{R-0}, invoking \cite[Theorem 3.3]{FLS},  we find that the linear space of solutions to equation \eqref{5-19-8} can be spanned by the following $N$ functions:
$$w_i(x):=\f{\pa v_0}{\pa x_i}, \quad i=1,2,\cdots, N.$$
That is, the general solution of \eqref{5-19-8} can be written as
$$w=\sum_{i=1}^N c_i\f{\pa v_0}{\pa x_i}.$$
Note that as $v_n^i$, $i=1,2$ are radially symmetric, so is $w_n$. Further, since $w_n\to w$ in $C^{2s-\de}_{loc}(\Rn)$, we conclude $w$ is radially symmetric. Therefore, $c_i=0 \quad\forall\, i=1,2,\cdots N$. This in turn implies $w_n\to 0$ in every compact subset of $\Rn$. Let $y_n\in\Rn$ such that
$$w_n(y_n)=\|w_n\|_{L^{\infty}}=1.$$
Consequently, $y_n\to\infty$ as $n\to\infty$.

\vspace{2mm}

{\bf Claim 2:} $|w_n(x)|\leq \f{C}{|x|^{N+2s}}, \quad |x|>R,$ for $n$ large enough and for some constant $C>0$ and $R>0$ independent of $n$.

\vspace{2mm}

Assuming the claim, let us first complete the proof of the theorem. From Claim 2, it follows, $w_n(y_n)\to 0$. This contradicts the fact that $w_n(y_n)=1$. Hence the  uniqueness result follows.

Now, here we prove Claim 2.  Define, $\tilde{w}_n:=\f{w_n}{\|w_n\|_2}$. Then from \eqref{5-18-3}, it follows
$$(-\De)^s\tilde{w}_n+\tilde{V}_n\tilde{w}_n=-\tilde{w}_n,$$ where
 $$\tilde{V}_n(x):= \eps_n^{\f{q-2}{s(p-2)}-1}c_n^2(x)-c_n^1(x).$$
From the definition of $c_n^1$ and $c_n^2$, it is clear that
$$|c_n^1|_{\infty}\leq(p-1)(\|v_n^1\|_{\infty}+\|v_n^2\|_{\infty})^{p-2}\leq C, $$
$$|c_n^2|_{\infty}\leq(q-1)(\|v_n^1\|_{\infty}+\|v_n^2\|_{\infty})^{q-2}\leq C, $$
for some constant $C>0$, since $v_n^i\to v_0$ in $C^{\alpha}(\Rn),\, i=1,2$. 

Thus $\tilde{V}_n$ is uniformly bounded in $L^{\infty}(\Rn)$. Using \eqref{5-19-1} and the fact that  $v_0$ is positive, radially symmetric and radially decreasing to 0 (see \cite{FLS}), we get
$\tilde{V}_n(x)\to 0$ uniformly in $n$ as $|x|\to\infty$. Therefore, it is easy to verify that for any given $\la\in(0,1)$, there exists $R>0$ (independent of $n$) such that $\tilde{V}_n(x)+\la\geq 0$, for $|x|\geq R$. Thus following the proof of \cite[Lemma C.2(i)]{FLS}, we can obtain
$$|\tilde{w}_n(x)|\leq \f{C}{1+|x|^{N+2s}},$$ where $C$ depends on only $N, s, p, q, \la, R, |{\tilde V}_n|_{\infty}$. Going back to the definition of $\tilde{w}_n$ and using Claim 1, implies
$$|w_n(x)|\leq \f{C}{|x|^{N+2s}}, \quad |x|>>1,$$
where the constant $C$ does not depend on $n$. Thus Claim 2 follows.

Hence the theorem follows.

\end{proof}

\section{ Local uniqueness in the critical case $p=2^*$}
{\bf Proof of Theorem \ref{t:uni-cri}}\begin{proof}
We prove the theorem by method of contradiction. Suppose there exists a sequence $\eps_n\to 0$ and two distinct functions $u_n^1:=u_{\eps_n}^1$ and $u_n^2:=u_{\eps_n}^2$  solve $(P_{\eps})$. Now define $\widetilde{v}_n^i:=\widetilde{v}_{\eps_n}^i$  as in \eqref{tilde-v-eps}, that is,
$$\widetilde{v}_n^i:=\la_{\eps_n}^{\f{N-2s}{2}}u_n^i(\la_{\eps_n} x), \quad i=1,2.$$ Then by the given hypothesis of the theorem,  we have
\be\lab{5-18-1}
\|\widetilde{v}_n^i-U_1\|_{\dot{H}^s(\Rn)\cap C^{2s-\si}(\Rn)}\to 0, \quad\text{as}\quad n\to \infty, \quad i=1, 2,\ee for some $\si\in(0,2s)$.
Define, $$\theta_n:=\widetilde{v}_n^1-\widetilde{v}_n^2 \quad\text{and}\quad \psi_n:=\f{\theta_n}{\|\widetilde{v}_n^1-\widetilde{v}_n^2\|_{\infty}} .$$
Corresponding to $u_{n}^i$, we define $w_{n}^i:=w_{\eps_n}^i, \, i=1,2,$ as in \eqref{u-eps} and then corresponding to $w_n^i$, we define $v_n^i:=v_{\eps_n}^i,\, i=1,2,$ as in \eqref{v-eps}. Therefore, as in \eqref{5-22-1}, we have
$$\widetilde{v}_n^i= v_n^i\bigg(\f{x}{S_{\eps_n}^\f{1}{2s}}\bigg).$$
For each $i=1,2$, $v_n^i$ satisfy \eqref{4-20-2}. Thus, $\widetilde{v}_n^i, \, i=1,2$ satisfy the following equation:
\be\lab{5-22-2}
(-\De)^s\widetilde{v}_n^i+\eps_n\la_{\eps_n}^{2s}\widetilde{v}_n^i=(\widetilde{v}_n^i)^{p-1}-\la_{\eps_n}^{\f{-2s(q-p)}{p-2}}(\widetilde{v}_n^i)^{q-1}, \quad\text{in}\quad\Rn.
\ee
Doing the computation as in the proof of Theorem \ref{t:uni-sub}, it is not difficult to check that  $\psi_n$ solves the following equation
\be\lab{5-22-3}
(-\De)^s \psi_n+\eps_n\la_{\eps_n}^{2s}\psi_n=\bigg(c_n^1(x)-\la_{\eps_n}^{-\f{2s(q-p)}{p-2}}c_n^2(x)\bigg)\psi_n \quad\text{in}\quad\Rn,
\ee
where $$c_n^1(x):=(p-1)\int_0^1\big(t \widetilde{v}_n^1(x)+(1-t)\widetilde{v}_n^2(x)\big)^{p-2}dt$$and
$$c_n^2(x):=(q-1)\int_0^1\big(t \widetilde{v}_n^1(x)+(1-t)\widetilde{v}_n^2(x)\big)^{q-2}dt.$$
Moreover, as $N>4s$, from \eqref{lower-bd} and \eqref{upper-bd} we have $\la_{\eps_n}\sim {\eps_n}^{\f{-(p-2)}{2s(q-2)}}$. Consequently,
\be\lab{5-22-4}
\eps_n\la_{\eps_n}^{2s}\to 0, \quad c_n^1(x)\to(p-1)U_1^{p-2}(x) \quad\text{and}\quad \la_{\eps_n}^{-\f{2s(q-p)}{p-2}}c_n^2(x)\to 0.\ee
Since, $\widetilde{v}_n^i\to U_1$ in $C^{\alpha}(\Rn)$, $i=1,2$ for some $\al>0$ and $\|\psi_n\|_{\infty}=1$, we get $(-\De)^s \psi_n$ is uniformly bounded in $L^{\infty}(\Rn)$ for some $n\geq n_0$. Therefore, applying Schauder estimate \cite{RS1}, we have
\be\lab{5-22-5}
\psi_n\to \psi \,\, \text{in}\quad C^{2s-\de}_{loc}(\Rn), \quad\text{for some}\quad \de\in (0,2s).
\ee

\vspace{2mm}

{\bf Claim 1:} $\{\psi_n\}$ is uniformly bounded in $\dot{H}^s(\Rn)$.

\vspace{2mm}

To see the claim, first we choose $\ba>0$ such that $\ba<\f{8s^2}{(N-2s)^2}$. Then  using Sobolev inequality and \eqref{5-22-3}, we find
\bea\lab{5-22-6}
S\bigg(\int_{\Rn}\psi_n^{2^*}dx\bigg)^\f{2}{2^*}\leq \|\psi_n\|^2_{\dot{H}^s(\Rn)}&=&-\eps_n\la_{\eps_n}^{2s}\int_{\Rn}|\psi_n|^2dx + \int_{\Rn}c_n^1(x)|\psi_n|^2dx \no\\
&\,\,&-\la_{\eps_n}^{\f{-2s(q-p)}{p-2}}\int_{\Rn}c_n^2(x)|\psi_n|^2dx\no\\
&\leq&\int_{\Rn}c_n^1(x)|\psi_n|^{2-\ba}dx\no\\
&\leq&\bigg(\int_{\Rn}\psi_n^{2^*}dx\bigg)^\f{2-\ba}{2^*}\bigg(\int_{\Rn}(c_n^1(x))^\f{2^*}{2^*-2+\ba}dx\bigg)^\f{2^*-2+\ba}{2^*}.
\eea
This in turn implies
$$\int_{\Rn}\psi_n^{2^*}dx\leq C \bigg(\int_{\Rn}(c_n^1(x))^\f{2^*}{2^*-2+\ba}dx\bigg)^\f{2^*-2+\ba}{\ba}.$$
From here following the same steps as in Claim 1 of the proof of Theorem \ref{t:uni-sub}, we can complete the proof of this claim.

Combining the Claim 1 along with \eqref{5-22-5}, we have $\psi_n\deb \psi$ in $\dot{H}^s(\Rn)$.
On the other hand, from \eqref{5-22-3} we also have
$$\int_{\Rn}(-\De)^\f{s}{2}\psi_n(-\De)^\f{s}{2}\va dx+\int_{\Rn}\eps_n\la_{\eps_n}^{2s}\psi_n\va dx=\int_{\Rn}\big(c_n^1(x)-\la_{\eps_n}^{\f{-2s(q-p)}{p-2}}c_n^2(x)\big)\psi_n\va dx,$$ for all $\va\in C^{\infty}_0(\Rn)$. Using the analysis done above, we can take the limit $n\to\infty$ both the sides and passing the limit we obtain
\bea\lab{5-22-8}
(-\De)^s \psi&=&(p-1)U_1^{p-2}\psi \quad\text{in}\quad\Rn,\no\\
\psi &\in& \dot{H}^s(\Rn).
\eea
On the other hand, as $\|\psi_n\|_{\infty}=1$ implies $\|\psi\|_{\infty}=1$ applying \cite[Theorem 1.1]{DPS},  it follows the linear space of solutions to equation \eqref{5-22-8} can be spanned by the following $N+1$ functions:
$$\psi_i(x)=\f{2x_i}{\big(1+|x|^2\big)^\f{N-2s+2}{2}}, \quad i=1,\cdots, N$$and
$$\psi_{N+1}(x)=\f{1-|x|^2}{\big(1+|x|^2\big)^\f{N-2s+2}{2}}.$$
That is, general solution of \eqref{5-22-8} can be written as
$$\psi(x)=c \f{1-|x|^2}{\big(1+|x|^2\big)^\f{N-2s+2}{2}}+\sum_{i=1}^N c_i \f{2x_i}{\big(1+|x|^2\big)^\f{N-2s+2}{2}},$$ where $c, c_i\in\R$.
Since by Theorem \ref{t:rad}, $u_n^i, \, i=1,2$ are symmetric function, so are $\widetilde{v}_n^i,\, i=1,2$ and so is $\psi$. Thus, each $c_i=0$.

\vspace{2mm}

{\bf Claim 2}:   $c=0$.

\vspace{2mm}

 Suppose $c\not=0$. We aim to get a contradiction. For simplicity of the calculation, we can take  $c=1$, that is,
\be\lab{5-22-9}\psi(x)=\f{1-|x|^2}{(1+|x|^2)^\f{N-2s+2}{2}}.\ee
As $\widetilde{v}_n^i$ satisfies \eqref{5-22-2}, applying Poho\v zaev identity \cite[Theorem A.1]{BM} to $\widetilde{v}_n^1$ and $\widetilde{v}_n^2$ and simplifying the expressions yields
\be\lab{5-24-1}
\f{s}{N}\eps_n\la_{\eps_n}^{2s}\int_{\Rn}(\widetilde{v}_n^1)^2dx=\la_{\eps_n}^{-\f{2s(q-p)}{p-2}}\bigg(\f{1}{2^*}-\f{1}{q}\bigg)\int_{\Rn}(\widetilde{v}_n^1)^q dx.
\ee
\be\lab{5-24-2}
\f{s}{N}\eps_n\la_{\eps_n}^{2s}\int_{\Rn}(\widetilde{v}_n^2)^2dx=\la_{\eps_n}^{-\f{2s(q-p)}{p-2}}\bigg(\f{1}{2^*}-\f{1}{q}\bigg)\int_{\Rn}(\widetilde{v}_n^2)^q dx.
\ee
Subtracting \eqref{5-24-2} from \eqref{5-24-1} and multiplying  $\f{1}{\|\widetilde{v}_n^1-\widetilde{v}_n^2\|_{\infty}}$ in both sides yields
\be\lab{5-24-3}
\f{s}{N}\eps_n\la_{\eps_n}^{2s}\int_{\Rn}\psi_n(\widetilde{v}_n^1+\widetilde{v}_n^2)dx=\la_{\eps_n}^{-\f{2s(q-p)}{p-2}}\bigg(\f{1}{2^*}-\f{1}{q}\bigg)\int_{\Rn}\psi_n\int_0^1\big(t\widetilde{v}_n^1+(1-t)\widetilde{v}_n^2\big)^{q-1}dtdx.
\ee
Here we observe that using \eqref{lower-bd} and \eqref{upper-bd} we have
$$\eps_n\la_{\eps_n}^{2s}\sim\eps^\f{q-p}{q-2} \quad\text{and}\quad \la_{\eps_n}^{-\f{2s(q-p)}{p-2}}\sim \eps^\f{q-p}{q-2}. $$
Moreover, as $\psi_n\to \psi$ and $\widetilde{v}_n^i\to U_1$ uniformly, for $i=1,2$, using Lemma \ref{l:1} via Claim 1 and \eqref{5-18-1}, we can pass the limit in \eqref{5-24-3}. Thus,
\be\lab{5-24-4}
\lim_{n\to\infty}\int_{\Rn}\psi_n(\widetilde{v}_n^1+\widetilde{v}_n^2)dx=2\int_{\Rn}\psi U_1 dx=2c_{N,s}\int_{\Rn}\f{1-|x|^2}{(1+|x|^2)^{N-2s+1}}dx.
\ee
Using change of variable the RHS of above equality can be computed as
\Bea
\int_{\Rn}\f{1-|x|^2}{(1+|x|^2)^{N-2s+1}}dx&=&\om_N\int_0^1\f{(1-r^2)r^{N-1}}{(1+r^2)^{N-2s+1}}dr-\om_N\int_1^0\f{(1-\f{1}{t^2})t^{-2-(N-1)}}{(1+\f{1}{t^2})^{N-2s+1}}dt\\
&=&-\om_N\int_0^1\f{r^{N-4s-1}(1-r^2)(1-r^{4s})}{(1+r^2)^{N-2s+1}}dr,
\Eea
where $\om_N$ denotes the surface measure of unit ball in $\Rn$. As $N>4s$, $$\int_0^1\f{r^{N-4s-1}(1-r^2)(1-r^{4s})}{(1+r^2)^{N-2s+1}}<\int_0^1 r^{N-4s-1}<\infty.$$ Thus,
\be\lab{5-24-5}
\lim_{n\to\infty}\int_{\Rn}\psi_n(\widetilde{v}_n^1+\widetilde{v}_n^2)dx=-C_1,
\ee
for some $C_1>0$. Similarly,
\bea\lab{5-24-6}
&&\lim_{n\to\infty}\int_{\Rn}\psi_n\int_0^1\big(t\widetilde{v}_n^1+(1-t)\widetilde{v}_n^2\big)^{q-1}dtdx \no\\
&=&\int_{\Rn}\psi U_1^{q-1}dx\no\\
&=&c_{N,s}\om_N\int_0^1\f{(1-r^2)r^{N-1}}{(1+r^2)^{1+\f{N-2s}{2}q}}dr
-c_{N,s}\om_N\int_0^1 \f{(1-r^2)r^{-1-N+(N-2s)q}}{(1+r^2)^{1+\f{N-2s}{2}q}}dr\no\\
&=&c_{N,s}\om_N\int_0^1\f{(1-r^2)r^{N-1}(1-r^{-2N+(N-2s)q})}{(1+r^2)^{1+\f{N-2s}{2}q}}dr\no\\
&=&C_2>0,
\eea
as $q>2^*=\f{2N}{N-2s}$.
Combining \eqref{5-24-5} and \eqref{5-24-6} along with \eqref{5-24-3}, we can conclude that for $n$ large enough we get $LHS$ of \eqref{5-24-3} is strictly negative and $RHS$ of  \eqref{5-24-3} is strictly positive, which is a contradiction. Hence the claim follows.

\vspace{2mm}

Claim 2 implies that $\psi\equiv 0$. Therefore, $\psi_n\to 0$ in $K$ for every compact set $K$ in $\Rn$ . Let $y_n\in\Rn$ such that
\be\lab{5-31-1}\psi_n(y_n)=\|\psi_n\|_{\infty}=1.\ee
This in turn implies $y_n\to\infty$ as $n\to \infty$.

\vspace{2mm}

{\bf Claim 3:}  $$|\psi_n(x)|\leq \f{C}{|x|^{N-2s}}, \quad\text{for}\quad  |x|>R,$$
where $C$ is independent of $n$.

\vspace{2mm}

Assuming the Claim 3, we have  $\psi_{n}(y_n)\to 0$ and this contradicts  \eqref{5-31-1}. Hence the  uniqueness result follows.

So now we are left to prove Claim 3. Note that from \eqref{5-22-3}, $\psi_n$ satisfies
\be\lab{26-2-1}
(-\De)^s\psi_n\leq c_n^1(x)\psi_n \quad\text{in}\quad\Rn.
\ee
Furthermore, for any $f\in H^{2s}(\Rn)$, the following general Kato-type inequality (see \cite[(C.8)]{FLS}) holds
  \be\lab{Kato}
  (-\De)^s |f|\leq \text{(sgn\ f)}(-\De)^s f, \quad a.e.\quad \text{on}\quad \Rn,
  \ee
  where $(\text{sgn} f)(x)=\f{f(x)}{|f(x)|}$ if $x\not=0$ and $0$ if $f(x)=0$. 

Since by given assumption $u_{\eps}^i\in H^{2s}(\Rn)$, we get  $\psi_n\in H^{2s}(\Rn)$. Thus applying \eqref{Kato} to \eqref{26-2-1} yields
\be
  (-\De)^s |\psi_n|\leq (\text{sgn}\ \psi_n)(-\De)^s \psi_n\leq\f{\psi_n}{|\psi_n|}c_n^1 \psi_n=c_n^1|\psi_n| \quad a.e.\quad \text{on}\quad \Rn.
\ee
Moreover, it is easy to see that $|\psi_n|\in H^s(\Rn)$. Now we define the Kelvin transform of  $|\psi_n|$ by $\tilde{\psi}_n$ as follows
$$\tilde{\psi}_n(x):=\f{1}{|x|^{N-2s}}|\psi_n|\big(\f{x}{|x|^2}\big), \quad x\not=0.$$
It is well-known that 
 $\tilde\psi_n\in H^s(\Rn)$ and
$(-\De)^s \tilde{\psi}_n(x)= \f{1}{|x|^{N+2s}}(-\De)^s |\psi_n|(\f{x}{|x|^2})$. Therefore, doing a straight forward computation we obtain
\begin{equation}\lab{5-19-10}
  (-\De)^s \tilde \psi_n \leq\f{1}{|x|^{4s}}c_n^1\big(\f{x}{|x|^2}\big)\tilde{\psi}_n. \end{equation}
 
Next, we show that for some $\ga>\f{N}{2s}$, $\f{1}{|x|^{4s}}c_n^1\big(\f{x}{|x|^2}\big)\in L^{\ga}_{B_r}$, for some $r>0$ small. Indeed, for $|x|<r$, $\f{x}{|x|^2}\in B_R^c$, for $R$ large and therefore using Lemma \ref{l:1} to $\widetilde{v}_n^i$, $i=1,2$ (via \eqref{5-18-1}) we have
\Bea
\int_{B_r}\f{1}{|x|^{4s\ga}}\bigg(c_n^1\big(\f{x}{|x|^2}\big)\bigg)^{\ga}&\leq&C\int_{B_r}\int_{0}^1 \f{1}{|x|^{4s\ga}}\bigg(t\widetilde{v}_n^1\big(\f{x}{|x|^2}\big)+(1-t)\widetilde{v}_n^2\big(\f{x}{|x|^2}\big)\bigg)^{(p-2)\ga} dt dx\no\\
&\leq&C\int_{B_r}\f{|x|^{-4s\ga}}{|\f{x}{|x|^2}|^{\f{N}{2}(p-2)\ga}}dx\no\\
&\leq&C\int_0^r t^{N-1-4s\ga+\f{N}{2}(p-2)\ga}dt.
\Eea
Since, $2+\f{4s}{N}<2^*=p$, we can choose, $\ga$ such that $\ga\in\bigg(\f{N}{2s}, \f{N}{4s-(p-2)\f{N}{2}}\bigg)$. For this choice of $\ga$, the RHS of above inequality is finite. Hence, $\f{1}{|x|^{4s}}c_n^1\big(\f{x}{|x|^2}\big)\in L^{\ga}_{B_r}$, for some $r>0$ small and $\ga>\f{N}{2s}$. Consequently, as $\tilde{\psi}_n\in H^s(\Rn)$ is a sub solution to
$$(-\De)^s u =\f{1}{|x|^{4s}}c_n^1\big(\f{x}{|x|^2}\big)u \quad\text{in}\quad\Rn,$$
applying Moser iteration technique to the above  equation in the spirit of \cite[Proposition 2.4]{JLX}), it is not difficult to check that
$$\sup_{B_{\f{r}{2}}}|\tilde{\psi}_n|\leq C\bigg(\int_{B_r}|\tilde{\psi}_n|^{2^*}\bigg)^\f{1}{2^*}.$$
Moreover,
$$\int_{B_r}|\tilde{\psi}_n|^{2^*}\leq \int_{\Rn}|\tilde{\psi}_n|^{2^*}= \int_{\Rn}|\psi_n|^{2^*}\leq C||\psi_n||_{\dot{H}^s(\Rn)}\leq C'.$$
The last inequality is due to Claim 1. This in turn implies,
$$|\psi_n(x)|\leq \f{C}{|x|^{N-2s}}, \quad |x|>>1,$$
for $n$ large enough and for some constant $C>0$. Hence Claim 3 follows.
\end{proof}

\vspace{3mm}

\noi{\bf Acknowledgement}: The authors would like to give a very special thanks to Prof. Vitaly Moroz whose contribution in this paper is exactly same as of any other author. The authors are greatly indebted to him. The first author is supported by the INSPIRE research grant \\
DST/INSPIRE 04/2013/000152 and the second author is supported by the
NBHM grant 2/39(12)/2014/RD-II.

\end{document}